\documentclass[11pt,a4paper]{article}
\usepackage[utf8]{inputenc}
\usepackage[T1]{fontenc}
\usepackage{amsmath,amssymb,amsfonts}
\usepackage{amsthm}
\usepackage{graphicx}
\usepackage{geometry}
\usepackage{enumitem}
\usepackage{bm}
\usepackage{xcolor}
\usepackage{hyperref}
\usepackage{array}
\usepackage{multirow}
\usepackage{booktabs}
\usepackage{float}
\usepackage{tikz}
\usepackage{caption}
\usepackage{subcaption}

\newtheorem{theorem}{Theorem}[section]
\newtheorem{lemma}[theorem]{Lemma}
\newtheorem{corollary}[theorem]{Corollary}
\newtheorem{proposition}[theorem]{Proposition}
\theoremstyle{definition}
\newtheorem{definition}[theorem]{Definition}
\newtheorem{remark}[theorem]{Remark}

\newcommand{\curl}{\mathbf{curl}}

\title{Error Analysis for a Nonconforming Finite Element Discretization of the 3D-CCPF Model}
\author{{Cl\'ement Adand\'e}$^{1}$ \quad {Koffi W. Hou\'edanou}$^{2}$ \quad {Guy A. D\`egla}$^{1}$
	\\
	\small{	$^1$ Institut de Mathématiques et de Sciences Physiques (IMSP) \& $^2$ Faculté des Sciences et Techniques (FAST) }\\ \small{Université d'Abomey-Calavi (UAC), Bénin}
 }

\date{}

\begin{document}
	
	\maketitle
	
	\begin{abstract}
		This paper presents an a-priori and a-posteriori error analysis for the time-dependent case of a coupled continuum pipe-flow in three dimensions on anisotropic meshes (i.e. elements with very large aspect ratio). Our investigation includes nonconforming discretizations as well as different elements and different versions of Crouzeix-Raviart finite element methods. 
		Under suitable regularity assumptions on the exact
		solution, we derive optimal a priori error estimates in the $H^1$-like norm.
		Leveraging residual elements, local error indicators and a global estimator are generated, demonstrating reliability and efficiency. The efficiency of error estimators holds unconditional, though its reliability involves the alignment measure notion. Mesh requirements are established to ensure the optimality of the error estimator.
	\end{abstract}
	\textbf{MSC:}
	74S05
	74S10
	74S15
	74S20
	74S25
	74S30
	\\
	\noindent\textbf{Keywords:} Karst aquifers $\bullet$ CCPF model$\bullet$ Crouzeix-Raviart finite element method $\bullet$ Anisotropic meshes $\bullet$ A priori and a posteriori error analysis.
	
	\section{Introduction}\label{sec1}
	
	Karst aquifers are found almost everywhere in the world. A karst system is a heterogeneous, discontinuous system comprising highly conductive conduits, more or less well interconnected in a generally three-dimensional network, located in a matrix of generally carbonate rocks that may themselves have conductive and capacitive properties \cite{bonnet1977introduction}. These systems are characterized by caves and networks of conduits that allow water to infiltrate rapidly and circulate through the rocks. The difficulty of characterizing this network complicates modeling \cite{bonnet1977introduction}. Moreover, C. Bernard affirms on this subject that they stubbornly resist attempts at modeling \cite{collignon2018aquifere}. They are distinguished from other aquifers by their high permeability, chemical reactivity and vulnerability to contamination.
	
	Several models attempt to simulate water flow and solute transport in karst aquifers. Significant mathematically rigorous progress has been achieved via the so-called coupled (Navier) Stokes-Darcy model with classical Beavers-Joseph interface boundary condition or various simplified interface conditions. However geologists have proposed various ad-hoc simplified models in order to make progress. One of the most popular models is the so-called coupled-continuum pipe flow (CCPF) model where the conduits are simplified into a network of one dimensional pipes \cite{wang2010coupled}. This original CCPF model, both the steady state case and the time-dependent case, are in fact ill-posed \cite{wang2010coupled}. In \cite{hua2009modeling}, Hua attempts to solve this problem and provides another formulation of this model. This model is well-posed in 2D but not in 3D. The ill-posedness of original and Hua's 3D model is due to the fluid exchange occurring on a very singular space: point singularity in the original CCPF and line singularity in Hua's model. This is the consequence of the fact that on the physical side, these simplified models do not reflect the physical reality of the fluid exchange happening over the interface between the conduit and matrix. X. Wang \cite{wang2010coupled} has proposed a new modification model of the CCPF model that is physically relevant appealing and mathematically sound 3D-CCPF model. Taking into consideration the physical observation of exchange of fluids over the whole interface \(\Gamma\) and retaining the Barenblatt type fluid exchange relation, X. Wang arrives at the first equation in new model $(\ref{model1})$. On the other hand, the conduit flow is not solved completely, and hence it makes sense to use the averaged head of the matrix on a proper cross section of the interface to calculate the Barenblatt exchange term in the conduit so that the conduit equation remains one dimensional. This leads to the second equation of (\ref{model1}) \cite{wang2010coupled}. In other terms, the new 3D-CCPF model of our concern in this work is given by $(\ref{model1})$ for the simple case of a one dimensional conduit centered at the \(x\) axis and laminar flow:
	
	\begin{equation}\label{model1}
		\left\{
		\begin{array}{ll}
			S\dfrac{\partial u^m}{\partial t} -\nabla \cdot \left(\mathbb{K}\nabla u^m\right) = -\dfrac{\alpha_{ex}}{|\Gamma_x|}\left(u^m -u^c\right)\delta_{\Gamma} + R^m & \text{in } \Omega_m\\[10pt]
			-\dfrac{\partial}{\partial x}\left(D\dfrac{\partial u^c}{\partial x}\right) = \alpha_{ex}\left(\dfrac{1}{|\Gamma_x|}\int_{\Gamma_x}u^m dl_x - u^c\right) + R^c & \text{in } \Omega_c
		\end{array}
		\right. \quad 
	\end{equation}
	
	where \(u^m\) represents hydraulic head in porous matrix \(\Omega_{m}\) and \(u^{c}\) hydraulic head in pipe conduit \(\Gamma\) (considered as a one dimension curve \(\Omega_{c}\)), and \(\Gamma_{x}\) is a cross section of \(\Gamma\) at \(x\), \(R^m\) and \(R^c\) are source terms. The constants \(\alpha_{ex}\), \(K\), and \(D\) are positives and represent respectively the exchange coefficient, the hydraulic conductivity and the laminar Poiseuille constant. This work aims to perform an a-posteriori error analysis on this model based on a nonconforming finite element discretization.
	
	Less attention has been paid to the $3D$-CCPF model in this axis. Several researches have been achieved about the 2D-CCPF model \cite{houedanou2019posteriori, liu2015anisotropic, liu2014finite, liu2016new, liu2014anisotropic}. In \cite{houedanou2019posteriori}, the author performs an a-posteriori error analysis using a unified anisotropic finite element discretization and covers two dimensional domains with conforming and nonconforming elements. Lower and upper error bounds are made under some assumptions on the elements. In \cite{liu2015anisotropic}, Wilson element on anisotropic mesh is used to solve the Darcy equation on porous matrix and an optimal error estimates in \(L^2\) and \(H^1\) norms are established independent of the regularity on the mesh. In \cite{liu2014finite}, the authors applied a finite volume element method to approach the continuum pipe-flow/Darcy problem and provide optimal error estimates in \(L^2\) and \(H^1\). The same tasks are performed in \cite{liu2016new} on the model using the new nonconforming finite element on the porous matrix which are better than \(Q_{1,1}\) conforming element and Wilson nonconforming element on the same mesh.
	
	To our knowledge, there are no existing results on the approximation of the 3D-CCPF model using the finite element method on either isotropic or anisotropic discretizations. This work and the next will cover this research axis. We intend to make an a-priori and an a-posteriori error analysis by using nonconforming finite element discretization on isotropic and anisotropic meshes; some numerical tests using Crouzeix-Raviart finite element method on tetrahedra, on pentahedra and on hexahedra; develop a new CCPF model in 3D which taking account two pipe conduits.
	
	The remaining of the paper is described as follow. Section \ref{sec2} provides some notations used in the following. Section \ref{sec3} presents the discrete problems. Section \ref{sec4} exposes some analytical tools. Section \ref{sec5} and Section \ref{sec6} present respectively an a-priori and a-posteriori error estimation for the time and the space discretization. The Section \ref{conclusion} concludes the results.
	
	\section{Notation}\label{sec2}
	\subsection{Notation relative to spaces and norms}\label{sec21}
	Let \(\mathbb{P}^k\) be the space of polynomials of order \(k\) or less. Instead of \(x\leq c y\) or \(c_{1}x\leq y\leq c_{2}x\) (with constants independent of \(x,y\) and meshes), we use the shorthand notation \(x\lesssim y\) and \(x\sim y\) respectively.
	
	Let \(X\) be a separable Banach space equipped with the norm \(\| \cdot \|_{X}\). We denote by \(L^2(0,T;X)\) the space of measurable functions \(\nu :(0,T)\to X\) such that
	\[
	\|\nu\|_{L^2(0,T,X)} = \left(\int_0^T\|\nu(\cdot,s)\|_{X}^2 ds\right)^{1/2}< +\infty.
	\]
	
	We define the space \(C^k(0,T;X)\) of functions \(\nu\) from \([0,T]\) into \(X\) belonging to \(C^k([0,T])\), and similarly define \(H^k(0,T;X)\) of functions \(\nu\) from \([0,T]\) into \(X\) belonging to \(H^k([0,T])\), where \(H^k([0,T])\) is the Sobolev space of functions with square-integrable derivatives up to order \(k\). We denote by \((\cdot,\cdot)\) the inner product on \(L^2(\Omega)\) or \(L^2(\Omega)^d\), and, by extension, the duality pairing between \(H^{-1}(\Omega)\) and \(H_0^1(\Omega)\), where \(\Omega \subset \mathbb{R}^d\) is a domain with \(d\in \{1,2,3\}\).
	
	\subsection{Notation relative to anisotropic finite element domains $T$}\label{sec22}
	
	Here, $T$ can be a triangle, a rectangle, a tetrahedron, a (rectangular) hexahedron, or a prismatic pentahedron. Parts of the analysis require reference elements \(\widehat{T}\) that can be obtained from the actual element $T$ via some affine linear transformation. The Table $1$ below lists the reference elements for each case. Furthermore for an element $T$, we define two or three anisotropy vectors \(p_{i,T}\), \(i = 1,2,3\), that reflect the main anisotropy directions of that element. These anisotropy vectors are pairwise orthogonal and described in Table $1$.

	\begin{table}[H]
		\centering
		\caption{Different elements, reference elements, and anisotropy vectors. This table outlines various geometric elements, their corresponding reference elements, and the specific anisotropy vectors used for each type.}
		\begin{tabular}{|c|c|c|}
			\hline
			\textbf{Element T} & \textbf{Reference element $\widehat{T}$} & \textbf{Anisotropy vectors $p_{i,T}$} \\
			\hline
			Triangle & $0 \leq x, y \leq 1$, $0 \leq x + y \leq 1$ & $p_{1,T}$ longest edge \\
			& & $p_{2,T}$ height vector \\
			\hline
			Rectangle & $0 \leq x, y \leq 1$ & $p_{1,T}$ longest edge \\
			& & $p_{2,T}$ height vector \\
			\hline
			Tetrahedron & $0 \leq x, y, z \leq 1$, $0 \leq x + y + z \leq 1$ & $p_{1,T}$ longest edge \\
			& & $p_{2,T}$ height in largest face containing $p_{1,T}$ \\
			& & $p_{3,T}$ remaining height \\
		\hline
			Hexahedron & $0 \leq x, y, z \leq 1$ & $p_{1,T}$ longest edge \\
			& & $p_{2,T}$ height in largest face containing $p_{1,T}$ \\
			& & $p_{3,T}$ remaining height \\
			\hline
			Pentahedron (Prism) & $0 \leq x, y, z \leq 1$, $0 \leq x + y \leq 1$ & $p_{1,T}$ longest edge \\
			& & $p_{2,T}$ height in largest face containing $p_{1,T}$ \\
			& & $p_{3,T}$ remaining height \\
			\hline
		\end{tabular}
		\label{tab:elements}
	\end{table}
	
	In addition, the anisotropy vectors \(p_{i,T}\) are enumerated such that \(|p_{1,T}|\geq |p_{2,T}|\geq |p_{3,T}|\) and arranged columnwise to define a matrix \(C_T\in \mathbb{R}^{2\times 2}\) or \(C_T\in \mathbb{R}^{3\times 3}\). This matrix describes the anisotropic orientations of the element $T$.
	
	Set \(h_{i,T} = |p_{i,T}|\) for any \(i = 1,2,3\) and \(H_T = C_T^{\top}\cdot C_T\). Further in the error analysis, the smallest of such lengths play a crucial role, so let denote it by:
	\[
	h_{\min,T} = \min \{h_{i,T}\}.
	\]
	
	Remark that \(C_T\) is an orthogonal matrix since the vectors \(p_{i,T}\) are pairwise orthogonal. We have:
	\[
	H_T = \operatorname{diag}\{h_{1,T}^2,h_{2,T}^2\} \quad \text{in } 2\mathrm{D}
	\]
	\[
	H_T = \operatorname{diag}\{h_{1,T}^2,h_{2,T}^2,h_{3,T}^2\} \quad \text{in } 3\mathrm{D}.
	\]
	
	Furthermore, introduce the height \(h_{E,T}\) over an edge/face of element \(T\) by:
	\[
	h_{E,T} = \frac{|T|}{|E|}.
	\]
	
	We further need the minimal size \(\rho(E)\) of a rectangular face \(E\in \mathcal{E}_{\square}\), i.e \(\rho(E)\) is the smallest of the lengths of the edges of \(E\).
	
	\section{The continuous, time semi-discrete and full discretization problems}\label{sec3}
	\subsection{Model}
	This paper is focused on the coupled-continuum pipe flow model (proposed by X. Wang \cite{wang2010coupled}) with Dirichlet conditions and defined as follow: given a vector function \(R = (R^m,R^c)\) and \(u_0^m\), find a vector function \(u = (u^m,u^c)\) such as:
	
	\begin{equation}\label{model2}
		\left\{
		\begin{array}{ll}
			S\dfrac{\partial u^m}{\partial t} -\nabla \cdot (K\nabla u^m) = -\dfrac{\alpha_{ex}}{2\pi}(u^m - u^c)\delta_{\Gamma} + R^m & \text{in } [0,T]\times \Omega_m \\[10pt]
			-\dfrac{\partial}{\partial x}\left(D\dfrac{\partial u^c}{\partial x}\right) = \alpha_{ex}\left(\dfrac{1}{2\pi}\int_{\Gamma_x}u^m dl_x - u^c\right) + R^c & \text{in } [0,T]\times \Omega_c \\[10pt]
			u = 0 & \text{on } \partial \Omega_m \times \partial \Omega_c \\[5pt]
			u^m(0,\cdot) = u_0^m & \text{on } \Omega_m
		\end{array}
		\right. \quad 
	\end{equation}
	
	where \(u^m\) and \(u^c\) represent the hydraulic head of the matrix and conduit, \(\Omega_{m}\) and \(\Omega_{c}\) are regions occupied by the matrix and the conduit (conceptualized to a one dimensional curve) respectively, \(\Gamma\) is the boundary of the circular horizontal conduit centered at \(x\)-axis, \(\Gamma_{x}\) is the cross section of \(\Gamma\) at \(x\) (a circle), \(dl_x\) represents the infinitesimal increment of arc length on \(\Gamma_{x}\) (equivalent to \(r(x)d\theta\) in the cylindrical coordinates with \(r(x)\) being the radius and \(\theta\) being the angle), and \(|\Gamma_{x}|\) is the length of \(\Gamma_{x}\) which is \(\pi d(x) = 2\pi r(x)\).
	
	For the sake of simplicity, let's consider \(\Omega_{m} = (a,b)\times B_{yz}\) with \(a\) and \(b\) two distinct real numbers satisfying \(a<b\), \(B_{yz}\) a bounded domain of \(\mathbb{R}^2\) containing \((y_0,z_0)\) a fixed element of \(\mathbb{R}^2\), \(\Omega_{c} = (a,b)\times \{(y_{0},z_{0})\}\) a 3D line embedded in a 1D line; \(\Gamma\) a cylinder centered on \(\Omega_{c}\) and its radius is a fixed real number \(r\); and \(\Gamma_{x}\) is a circle of radius \(r\) with center \((y_0,z_0)\) in the \(y\)-\(z\) plane at position \(x\) (see Fig. \ref{fig:domain}). In comprehension, \(\Gamma\) is defined by:
	\[
	\Gamma = \{(x,y,z)\in \Omega_{m}\;|\;(x,y,z) = s_{0}(x,\theta) \text{ with } x\in (a,b) \text{ and } \theta \in [0,2\pi)\}
	\]
	where \(s_0(x,\theta) = (x,y_0 + r\cos \theta ,z_0 + r\sin \theta)\).
	
	\begin{figure}[H]
		\centering
		\begin{tikzpicture}[scale=1.2]
			\draw[thick] (0,0) -- (5,0) -- (5,3) -- (0,3) -- cycle;
			\draw[thick] (0,0) -- (0.5,-0.5) -- (5.5,-0.5) -- (5,0);
			\draw[thick] (5,0) -- (5.5,-0.5);
			\draw[thick] (5,3) -- (5.5,2.5);
			\draw[thick] (0,3) -- (0.5,2.5) -- (5.5,2.5) -- (5,3);
			\draw[thick] (0.5,2.5) -- (0.5,-0.5);
			
			\node at (2.5,1.5) {$\Omega_m$};
			
			\draw[fill=gray!40, opacity=0.6] (2,0.5) circle [x radius=0.8, y radius=0.3];
			\draw[fill=gray!40, opacity=0.6] (2,2.5) circle [x radius=0.8, y radius=0.3];
			\draw[thick] (1.2,0.5) -- (1.2,2.5);
			\draw[thick] (2.8,0.5) -- (2.8,2.5);
			
			\node at (3.5,1.8) {$\Gamma$};
			
			\draw[white, line width=2pt] (2,0.5) -- (2,2.5);
			\draw[dashed, red, line width=1.5pt] (2,0.5) -- (2,2.5);
			
			\node[red] at (2.3,1.2) {$\Omega_c$};
			
			\draw[->] (0,0) -- (0.5,0) node[right] {$y$};
			\draw[->] (0,0) -- (0,-0.5) node[below] {$z$};
			\draw[->] (0,0) -- (-0.3,0.3) node[left] {$x$};
			
			\draw[<->] (1.2,0.3) -- (1.2,0.7);
			\node at (0.8,0.5) {$r$};
			
			\node at (1.0,2.8) {$\Gamma_x$};
			\end{tikzpicture}
		\caption{Geometric representation of the model domain. This figure illustrates the model domain with different geometric elements, including a pipe conduit, the centering line, and the axis labels. The filled and opaque regions represent various faces of the $3D$ structure, while the gray region \(\Gamma\) indicates the conduit within the domain centered at the dashed red line \(\Omega_{c}\). The regions \(\Omega_{m}\) and \(\Gamma\) are labeled accordingly.}
		\label{fig:domain}
	\end{figure}
	\subsection{Continuous problem}
	In all that follows, we denote by \(W = H_0^1(\Omega_{m})\times H_0^1(\Omega_{c})\) and \(V = L^2(0,T;W)\). We equip these spaces respectively with the norms \(\| \cdot \|_{W}\) and \(\| \cdot \|_{V}\) given by:
	\begin{equation}
		\begin{array}{rl}
			\| w\|_{W} = \|\nabla w^{m}\|_{\Omega_{m}} + \|\partial_{x}w^{c}\|_{\Omega_{c}},\\[5pt]
			\| v\|_{L^2(\Omega)} = \left(\displaystyle\int_{0}^{T}\| v(t,\cdot)\|_{W}^{2}dt\right)^{1/2}.
		\end{array} \quad (3)
	\end{equation}
	
	We also define on \(L^2(\Omega_m)\times L^2(\Omega_c)\) a norm \(\| \cdot \|_{L^2(\Omega)}\) by:
	\[
	\| v\|_{L^2(\Omega)} = \| w^{m}\|_{L^2(\Omega_m)} + \| w^c\|_{L^2(\Omega_c)}.
	\]
	
	For any \(t \in [0, T]\), we consider on \(W \times W^{m}\) and \(W \times W^{c}\) respectively, the following bilinear forms:
	\[
	a^{m}(u,v^{m}) = K\int_{\Omega_{m}}\nabla u^{m}(x)\cdot \nabla v^{m}(x)dx + a_{ex}\int_{a}^{b}\left[\Pi (u^{m}v^{m})(x) - u^{c}(x)\Pi v^{m}(x)\right]dx
	\]
	\[
	a^{c}(u,v^{c}) = D\int_{a}^{b}\frac{\partial u^{c}}{\partial x}\frac{\partial v^{c}}{\partial x} - a_{ex}\int_{a}^{b}\left(\Pi u^{m}(x) - u^{c}(x)\right)v^{c}(x)dx
	\]
	where \(\Pi : H_0^1(\Omega_m) \to L^2(a,b)\) is the averaging operator given by:
	\[
	\Pi u(x) = \frac{1}{2\pi}\int_{0}^{2\pi}u(s_0(x,\theta))d\theta.
	\]
	
	Obviously, \(\Pi\) is linear and bounded, i.e. there exists \(C_{\Pi} > 0\) such that:
	\[
	\| \Pi v\|_{L^2(a,b)}\leq C_\Pi \| v\|_{H_0^1(\Omega_m)}
	\]
	(see Appendix A for its proof).
	
	Furthermore, by rearranging, one can remark that:
	\begin{equation}
		\begin{array}{r}
			\displaystyle\int_{a}^{b}\left[\Pi (u^{m}v^{m})(x) - u^{c}(x)\Pi v^{m}(x)\right]dx - \int_{a}^{b}\left(\Pi u^{m}(x) - u^{c}(x)\right)v^{c}(x)dx\\[10pt]
			= \dfrac{1}{2\pi}\displaystyle\int_{a}^{b}\int_{0}^{2\pi}(u^{m}(s_{0}(x,\theta) - u^{c}(x))(v^{m}(s_{0}(x,\theta) - v^{c}(x))d\theta dx
		\end{array} 
	\end{equation}
	
	We assume that the datum \(R\) satisfies \(R^{s} \in L^{2}(0,T,H^{-1}(\Omega_{s}))\) and the initial value \(u_{0}^{s} \in L^{2}(\Omega_{s})\) for \(s \in \{m,c\}\). Under these assumptions, one has this result.
	
	\begin{theorem}\label{thm1}
		The weak formulation of (\ref{model2}) is: find a function \(u\) in \(V\) such that:
		\begin{equation}\label{model3}
			\left\{
			\begin{array}{ll}
				S(\partial_t u^m(t,\cdot),w^m) + a^m(u(t,\cdot),w^m) + a^c(u(t,\cdot),w^c) = (R(t,\cdot),w), & \forall w\in W,\forall t\in [0,T]\\[5pt]
				u(0,\cdot) = u_0 & \text{in } \Omega_m \times \Omega_c,
			\end{array}
			\right. \quad 
		\end{equation}
		with \(R\) the linear form: \((R(t,\cdot),w) = (R^{m}(t,\cdot),w^{m})_{\Omega_{m}} + (R^{c}(t,\cdot),w^{c})_{\Omega_{c}}\).
		
		There is a unique solution of this problem.
	\end{theorem}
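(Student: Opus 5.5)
The plan has two parts: derive the weak form (\ref{model3}) from (\ref{model2}), then prove well-posedness with the abstract theory of linear parabolic–elliptic systems.

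\textbf{Derivation.} Multiply the first equation of (\ref{model2}) by \(w^m\in H_0^1(\Omega_m)\) and integrate by parts; the boundary term vanishes. The singular term \(\delta_\Gamma\) is interpreted as \(\int_\Gamma(u^m-u^c)w^m\,d\sigma\). In the cylindrical parametrization \(s_0(x,\theta)\) we have \(d\sigma = r\,d\theta\,dx\), and with the scaling by \(|\Gamma_x|=2\pi r\) this becomes
\[
\int_a^b\left[\Pi(u^mw^m)-u^c\,\Pi w^m\right]dx .
\]
Multiply the second equation by \(w^c\in H_0^1(\Omega_c)\) and integrate by parts in \(x\). Adding the two identities gives (\ref{model3}) with the forms \(a^m,a^c\).

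\textbf{Abstract setting.} Write the problem as \(\frac{d}{dt}\mathcal M u + \mathcal A u = R\) on the Gelfand triple \(W\subset H\subset W'\). Here \(H = L^2(\Omega_m)\times L^2(\Omega_c)\), and \(\mathcal M=\mathrm{diag}(S,0)\) is degenerate because the conduit equation has no time derivative. Let \(a(u,w)=a^m(u,w^m)+a^c(u,w^c)\).

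\emph{Continuity of \(a\).} The gradient parts are bounded by Cauchy–Schwarz. For the exchange part, use the rearranged identity and the trace bound \(\|\Pi v\|_{L^2(a,b)}\le C_\Pi\|v\|_{H_0^1}\), which applies also to the trace \(\|v\|_{L^2(\Gamma)}\). This gives \(|a(u,w)|\lesssim \|u\|_W\|w\|_W\).

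\emph{Coercivity of \(a\).} By the rearranged identity,
\[
a(w,w) = K\|\nabla w^m\|^2 + D\|\partial_x w^c\|^2 + \frac{\alpha_{ex}}{2\pi}\int_a^b\!\!\int_0^{2\pi}\left(w^m(s_0)-w^c\right)^2 d\theta\,dx ,
\]
and the last term is nonnegative. Hence \(a(w,w)\ge \min(K,D)\,\|w\|_W^2/2\), and the Poincaré inequality gives coercivity on \(W\).

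\textbf{Handling the degenerate time derivative.} This is the main obstacle. I would first eliminate \(u^c\). For each fixed \(u^m\in H_0^1(\Omega_m)\), the conduit problem
\[
D(\partial_x u^c,\partial_x w^c)+\alpha_{ex}(u^c,w^c)=\alpha_{ex}(\Pi u^m,w^c)+(R^c,w^c)
\]
is uniquely solvable by Lax–Milgram, with a bounded affine solution map \(u^m\mapsto u^c=\mathcal L u^m + g(t)\). Substituting into the matrix equation gives a parabolic problem for \(u^m\) alone, with reduced bilinear form \(b(u^m,w^m)=a^m((u^m,\mathcal L u^m),w^m)\).

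The form \(b\) is continuous. It is coercive because \(b(v,v)=a((v,\mathcal Lv),(v,\mathcal Lv))\): taking \(w^c=\mathcal Lv\) in the homogeneous conduit equation gives \(a^c((v,\mathcal Lv),\mathcal Lv)=0\).

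The Lions theorem (Faedo–Galerkin with energy estimates) then yields a unique \(u^m\in L^2(0,T;H_0^1)\) with \(\partial_t u^m\in L^2(0,T;H^{-1})\). This implies \(u^m\in C([0,T];L^2)\), so the initial condition \(u^m(0)=u_0^m\) makes sense. Setting \(u^c=\mathcal L u^m+g\) gives \(u\in V\).

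\textbf{Uniqueness.} Take the difference of two solutions, test with the difference itself, and use coercivity and Gronwall. This gives \(S\|u^m(t)\|^2\le 0\), hence also \(u^c=0\).

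The initial condition only concerns \(u^m\). I would interpret \(u(0)=u_0\) that way, since \(u^c\) is determined instantaneously by \(u^m\).
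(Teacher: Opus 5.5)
Your proof is correct, but it follows a different route from the paper. The paper runs Faedo--Galerkin directly on the coupled system. It expands $u_n=\sum_k\alpha_k^n(t)e_k$ in a Hilbert basis of $W$, obtains $A\dot\alpha^n+B\alpha^n=C(t)$, and applies Cauchy--Lipschitz on the claim that $A=(\int_{\Omega_m}Se_k^me_l^m)_{k,l}$ is positive definite. It then uses the energy bound (\ref{model4}), weak compactness, and Lions--Magenes for the initial condition, and gets uniqueness from (\ref{model4}) applied to the difference of two solutions. You instead exploit the parabolic--elliptic structure: Lax--Milgram on the stationary conduit equation gives $u^c=\mathcal L u^m+g(t)$, the identity $a^c((v,\mathcal Lv),\mathcal Lv)=0$ makes the reduced form $b$ coercive, and Lions' theorem handles the resulting nondegenerate parabolic problem for $u^m$ alone. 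Your route gives a correct treatment of the degeneracy $\mathrm{diag}(S,0)$. The paper's $A$ only sees $m$-components, so in general it is only semidefinite: it has a zero row for any basis vector of the form $(0,e^c)$. The Galerkin system is therefore really differential--algebraic, and turning it into an honest ODE requires eliminating the conduit coefficients, which is a discrete version of your $\mathcal L$. Citing Lions' theorem also spares you compactness details that the paper asserts without derivation, notably the $L^2(0,T;H^{-1})$ bound on $\partial_tu_n^m$, which does not follow from (\ref{model4}) alone. The paper's route, once repaired, is more self-contained and reuses one energy identity for both the bounds and uniqueness. Two minor remarks. First, in your continuity step the logic is reversed: it is the trace theorem $\|v\|_{L^2(\Gamma)}\lesssim\|v\|_{H^1(\Omega_m)}$ that implies the bound on $\Pi$ (by Cauchy--Schwarz in $\theta$), not the other way round. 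Second, both your argument and the paper's give (\ref{model3}) for a.e.\ $t$, not every $t$, which is all one can expect when $R\in L^2(0,T;H^{-1})$.
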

	
	The following lemma guarantees uniqueness of solution: if \(u_{1}\) and \(u_{2}\) are two solutions of the problem (\ref{model3}), \(u_{1} - u_{2}\) is a solution of (\ref{model3}) with source term \(R = (0,0)\) and initial condition \(u_{0} = 0\); hence applying the following lemma to \(u_{1} - u_{2}\), one can conclude that \(u_{1} - u_{2} = 0\). Here is the lemma.
	
	\begin{lemma}
		If \(u\) is a solution of (\ref{model2}), for any \(t\in [0,T]\), we have the following estimation:
		\begin{equation}\label{model4}
			\begin{array}{l}
				S\| u^{m}(t,\cdot)\| ^2 +\displaystyle\int_0^t\left(K\| \nabla u^m(s,\cdot)\| ^2 +D\left\| \frac{du^c(s,\cdot)}{dx}\right\| ^2\right)\\[10pt]
				\lesssim \displaystyle\int_0^t\left(\frac{1}{K}\| R^m(s,\cdot)\| ^2 + \frac{1}{D}\| R^c(s,\cdot)\| ^2\right)dt + S\| u_0^m\| ^2
			\end{array} \quad 
		\end{equation}
	\end{lemma}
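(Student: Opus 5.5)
The natural route is a standard energy estimate. I would test the weak formulation (\ref{model3}) at a fixed time $s$ with $w = u(s,\cdot) = (u^m(s,\cdot),u^c(s,\cdot))$, which is admissible since $u(s,\cdot)\in W$ for a.e.\ $s$.

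The first term becomes
\[
S(\partial_t u^m,u^m)=\tfrac{S}{2}\tfrac{d}{ds}\|u^m(s,\cdot)\|^2 ,
\]
which I would justify through the usual Lions--Magenes identity for $u^m\in L^2(0,T;H_0^1)$ with $\partial_t u^m\in L^2(0,T;H^{-1})$. For the bilinear forms, the rearrangement identity stated just before Theorem \ref{thm1} is the key ingredient:
\[
a^m(u,u^m)+a^c(u,u^c)=K\|\nabla u^m\|^2+D\|\partial_x u^c\|^2+\frac{\alpha_{ex}}{2\pi}\int_a^b\!\!\int_0^{2\pi}\bigl(u^m(s_0(x,\theta))-u^c(x)\bigr)^2\,d\theta\,dx .
\]
The exchange term is nonnegative, so it can be dropped. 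This sign structure is the crucial point: the cross coupling through $\Pi$ cancels exactly, and the boundedness of $\Pi$ is never needed.

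Next I would bound the right-hand side with the duality pairing and Young's inequality. With $\|\cdot\|$ for the $H^{-1}$ norm of the data, Poincaré on $\Omega_m$ and on $(a,b)$ gives
\[
(R^m,u^m)\le \|R^m\|\,\|\nabla u^m\|\le \frac{1}{2K}\|R^m\|^2+\frac K2\|\nabla u^m\|^2 ,
\]
and similarly
\[
(R^c,u^c)\le \frac{1}{2D}\|R^c\|^2+\frac D2\|\partial_x u^c\|^2 .
\]
If $R$ is read as an $L^2$ norm, the same bound holds after inserting the Poincaré constants, which are absorbed into $\lesssim$. Absorbing the gradient terms into the left side gives
\[
\tfrac{S}{2}\tfrac{d}{ds}\|u^m\|^2+\tfrac K2\|\nabla u^m\|^2+\tfrac D2\|\partial_xu^c\|^2\le \frac1{2K}\|R^m\|^2+\frac1{2D}\|R^c\|^2 .
\]

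Finally I would integrate over $(0,t)$ and use $u^m(0)=u_0^m$. This yields (\ref{model4}) with explicit constants; multiplying through by 2 gives it with constant $1$. No Gronwall argument is needed, because there is no lower-order term with a bad sign.

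The main obstacle is justifying the time-derivative identity and the regularity of $\partial_t u^m$, since $V=L^2(0,T;W)$ alone does not provide it. I would assume $\partial_t u^m\in L^2(0,T;H^{-1}(\Omega_m))$, which is implicit in (\ref{model3}) making sense, or alternatively argue on Galerkin approximations and pass to the limit by weak lower semicontinuity. A second, smaller point is that $u^c$ has no time derivative, so the conduit equation acts as an instantaneous constraint. This is harmless in the estimate, but it means no $\|u^c(t)\|$ term appears on the left side.
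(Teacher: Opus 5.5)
Your proof is correct and follows the same energy argument as the paper: test with $w=(u^m,u^c)$, use the rearrangement identity so the exchange contribution becomes the nonnegative term $\frac{\alpha_{ex}}{2\pi}\int_a^b\int_0^{2\pi}\bigl(u^m(s_0(x,\theta))-u^c(x)\bigr)^2\,d\theta\,dx$, then apply Young and Poincaré and integrate over $(0,t)$. The one difference favours you: you apply Poincaré (or the $H^{-1}$ duality bound) before Young, so the Poincaré constants land on the data terms, whereas the paper applies Young to $\|u^m\|$ and $\|u^c\|$ first and then absorbs $\frac{K}{2}C_{\Omega_m}^2\|\nabla u^m\|^2$ and $\frac{D}{2}C_{ab}^2\|\partial_x u^c\|^2$ into the left-hand side, which tacitly requires small Poincaré constants.
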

	
	\begin{proof}
		Taking \((u(t,\cdot),u^c)\) as \(w\) in (\ref{model3}), using Cauchy inequality and the fact that two real numbers satisfy \(ab\leq \frac{a^2+b^2}{2}\), we get:
		\[
		\begin{array}{l}
			\displaystyle\int_{\Omega_m}S\frac{\partial u^m(t,\cdot)}{\partial t} u^m(t,\cdot)dx + \int_{\Omega_m}K\nabla u^m(t,\cdot)\cdot \nabla u^m(t,\cdot)dx + D\left\| \frac{du^c(t,\cdot)}{dx}\right\| ^2\\[10pt]
			+\dfrac{a_{ex}}{2\pi}\displaystyle\int_a^b\int_0^{2\pi}\left(u^m(t,s_0(x,\theta)) - u^c(x)\right)^2 d\theta dx \leq \frac{1}{2K}\| R^m(t,\cdot)\| ^2 +\frac{K}{2}\| u^m(t,\cdot)\| ^2\\[10pt]
			+\dfrac{1}{2D}\| R^c(t,\cdot)\| ^2 +\dfrac{D}{2}\| u^c\| ^2.
		\end{array} \quad \label{model7}
		\]
		
		By using Poincaré inequalities on the right hand side, this leads to the following relation:
		\[
		\begin{array}{rl}
			& \dfrac{S}{2}\dfrac{d}{dt}\| u^{m}(t,\cdot)\|^{2} + \dfrac{K}{2}\| \nabla u^{m}(t,\cdot)\|^{2} + \dfrac{D}{2}\left\| \dfrac{d u^{c}}{dx}\right\|^{2} \\[10pt]
			& + \dfrac{a_{ex}}{2\pi}\displaystyle\int_{a}^{b}\int_{0}^{2\pi}\left(u^{m}(t,s_{0}(x,\theta)) - u^{c}(x)\right)^{2}d\theta dx \\[10pt]
			& \lesssim \dfrac{1}{2K}\| R^{m}(t,\cdot)\|^{2} + \dfrac{1}{2D}\| R^{c}(t,\cdot)\|^{2}.
		\end{array} \quad \label{8}
		\]
		
		Integrate on \([0,t]\), for \(t\in [0,T]\), we obtain:
		\[
		\begin{array}{l}
			\dfrac{S}{2}\left(\| u^{m}(t,\cdot)\|^{2} - \| u^{m}(0,\cdot)\| \right) + \displaystyle\int_{0}^{t}\left(K\| \nabla u^{m}(s,x)\|^{2} + D\left\| \frac{\partial u^{c}(s,x)}{\partial x}\right\|^{2}\right)ds\\[10pt]
			+\dfrac{a_{ex}}{2\pi}\displaystyle\int_{0}^{t}\int_{a}^{b}\int_{0}^{2\pi}\left(u^{m}(s,s_{0}(x,\theta)) - u^{c}(x)\right)^{2}d\theta dx ds\\[10pt]
			\lesssim \displaystyle\int_{0}^{t}\left(\frac{1}{2K}\| R^{m}(s,\cdot)\|^{2} + \frac{1}{2D}\| R^{c}(s,\cdot)\|^{2}\right)ds
		\end{array} \quad \label{9}
		\]
		
		Since the quantity \(\frac{a_{ex}}{2\pi}\displaystyle\int_{a}^{b}\displaystyle\int_{0}^{2\pi}\left(u^{m}(t,s_{0}(x,\theta)) - u^{c}(x)\right)^{2}d\theta dx\) is positive, inequality (\ref{model4}) holds.
	\end{proof}
	
	The existence of solution for the problem (\ref{model3}) will be proved with a Galerkin method and the Cauchy-Lipschitz theorem.
	
	
	\begin{proof} (Theorem \ref{thm1})
		We prove the existence and uniqueness of a weak solution using the Galerkin approximation method, which consists in constructing a sequence of finite-dimensional approximate problems and then passing to the limit.
		
		\textbf{step.1: Construction of finite-dimensional approximation spaces:}
		Since \(W = H_0^1(\Omega_m)\times H_0^1(\Omega_c)\) is a separable Hilbert space, it has a Hilbertian basis denoted by \((e_n)_{n\in \mathbb{N}}\) with \(e_n = (e_n^m,e_n^c)\). Let \((W_n)_{n\in \mathbb{N}}\) be a sequence of finite-dimensional subspaces of \(W\) defined by: \(W_n = \operatorname{span}(e_0,e_1,\ldots ,e_n)\). Since \((e_n)_{n\in \mathbb{N}}\) is a Hilbertian basis of \(W\) and \(u_0\in H_0^1(\Omega)\), there is a sequence \(u_{0n}^{m}\) in \(W\) such that:
		\[
		\forall n\in \mathbb{N}, u_{0n}\in W_n, \quad \| u_{0n}^m -u_0^m\|_{H_0^1(\Omega_m)} \xrightarrow[n\to\infty]{} 0.
		\]
		
		\textbf{step.2: Formulation of finite-dimensional problems:}
		Consider the problem: find \(u_n\) in \(L^2(0,T;pr_1(W_n))\times pr_2(W_n)\) such that:
		\[
		\left\{
		\begin{array}{l}
			S(\partial_t u_n^m(t,\cdot),w_n^m) + a^m(u(t,\cdot),w_n^m) + a^c(u_n(t,\cdot),w_n^c) = (R(t,\cdot),w_n),\\[5pt]
			\forall w_n\in W_n
		\end{array}
		\right. \quad \label{model8}
		\]
		where \(pr_1:W_n\longrightarrow H_0^1(\Omega_m)\) and \(pr_2:W_n\longrightarrow H_0^1(\Omega_c)\) are canonical projections.
		
		Since \(W_n\) is a finite-dimensional space, the problem (\ref{model8}) is straightforwardly equivalent to:
		\[
		\left\{
		\begin{array}{l}
			S(\partial_t u_n^m(t,\cdot),e_k^m) + a^m(u(t,\cdot),e_k^m) + a^c(u_n(t,\cdot),e_k^c) = (R(t,\cdot),e_k),\\[5pt]
			\forall k\in \{0,1,\ldots ,n\}.
		\end{array}
		\right. \quad \label{model9}
		\]
		
		We seek a solution \(u_n\) in the finite-dimensional space \(W_n\) and we can decompose this solution with the basis vectors. Hence, we set: \(u_n(t,x) = \sum_{k=1}^n \alpha_k^n(t) e_k(x)\), where \(\alpha_k^n : t\in [0,T] \longrightarrow \alpha_k^n(t) \in \mathbb{R}\) are scalar functions. We just need to find the functions \(\alpha_k^n\) to get solution \(u_n\) of the problem (\ref{model8}). And such functions satisfy:
		\[
		\frac{d\alpha_k^n(t)}{dt}\int_{\Omega_m} Se_k^m e_l^m + \alpha_k^n(t)\int_{\Omega_m} K\nabla e_k^m(x)\nabla e_l^m(x)dx + \alpha_k^n(t)\int_a^b D\frac{de_k^c(x)}{dx}\frac{de_l^c(x)}{dx}dx
		\]
		\[
		+ \alpha_k^n(t)\frac{\alpha_{ex}}{2\pi}\int_a^b\int_0^{2\pi} \left(e_k^m(s_0(x,\theta))-e_k^c(x)\right)\left(e_l^m(s_0(x,\theta))-e_l^c(x)\right)d\theta dx
		\]
		\[
		= (R^m(t,\cdot),e_l^m)+(R^c(t,\cdot),e_l^c), \quad \forall k,l\in\{0,n\}.
		\]
		
		This equation can take the following form: \(a_{kl}\frac{d\alpha_k^n(t)}{dt} + b_{kl}\alpha_k^n(t) = c_k(t)\) for any \(k,l\in\{0,n\}\). Note that \(a_{kl}\), \(b_{kl}\), and \(c_k(t)\) are obviously defined. Hence, the problem (\ref{model8}) is equivalent to the ordinary differential equation: \(A\frac{d\alpha^n(t)}{dt} + B\alpha^n(t) = C(t)\) with \(A = (a_{kl})_{0\leq k,l\leq n}\), \(B = (b_{kl})_{0\leq k,l\leq n}\), \(C(t) = (c_k(t))_{0\leq k\leq n}\) and \(\alpha^n(t) = (\alpha_k^n(t))_{0\leq k\leq n}\).
		
		Since \(A\) is a positive definite matrix, this problem has a unique solution thanks to the Cauchy-Lipschitz theorem. We can conclude that the problem (\ref{8}) has a unique solution \(u_n\).
		
		\textbf{step. 3: Convergence of the sequence \((u_n)_n\):}
		With the same manipulations, the estimation (\ref{model4}) holds for problem (\ref{model8}). Hence, we have:
		\begin{eqnarray}\label{model12}\nonumber
		S\|u_n^m(t,\cdot)\|^2 + \int_0^t\left(K\|\nabla u_n^m(s,\cdot)\|^2 ds + D\left\|\frac{du_n^c(s,\cdot)}{dx}\right\|^2\right)
		&\lesssim& \int_0^t\left(\frac{1}{K}\|R^m(s,\cdot)\|^2 + \frac{t}{D}\|R^c(s,\cdot)\|^2\right)\\
		 &+& S\|u_{0n}^m\|^2.
		\end{eqnarray}
		
		This means that the sequence \((u_n)_n\) is bounded in the space \(V = L^2(0,T;W)\). Hence, thanks to Banach-Alaoglu theorem \cite{brezis1983analyse}, there is a subsequence \((u_{n_k})_k\) and a function \(u\in V\) that satisfies for any \(w\in W\):
		\[
		\int_{\Omega_m}\frac{\partial u_{n_k}^m(t,x)}{\partial t} w^m(x)dx \xrightarrow[k\to\infty]{} \int_{\Omega_m}\frac{\partial u^m(t,x)}{\partial t} w^m(x)dx
		\]
		\[
		\int_{\Omega_m}\nabla u_{n_k}^m(t,x)\cdot\nabla w^m(x)dx \xrightarrow[k\to\infty]{} \int_{\Omega_m}\nabla u^m(t,x)\cdot\nabla w^m(x)dx
		\]
		\[
		\int_{\Omega_m}(u_{n_k}^m(x,r,\theta)-u_{n_k}^c(x))w^m \xrightarrow[k\to\infty]{} \int_{\Omega_m}(u^m(x,r,\theta)-u^c(x))w^m
		\]
		\[
		\int_{\Omega_m}(u_{n_k}^m(x,r,\theta)-u_{n_k}^c(x))w^c \xrightarrow[k\to\infty]{} \int_{\Omega_m}(u^m(x,r,\theta)-u^c(x))w^c
		\]
		\[
		\int_a^b\frac{du_{n_k}^c}{dx}\frac{dw^c}{dx} \xrightarrow[k\to\infty]{} \int_a^b\frac{du^c}{dx}\frac{dw^c}{dx}.
		\]
		
		Therefore, the limit \(u\) satisfies the following:
		\[
		S(\partial_t u^m(t,\cdot),w^m) + a^m(u(t,\cdot),w^m) + a^c(u(t,\cdot),w^c) = (R(t,\cdot),w), \quad \forall w\in W, \forall t\in[0,T] \quad \label{model10}
		\]
		
		\textbf{step. 4: Verification of initial condition:} \(u(0,\cdot) = u_0\).
		
		First, we observe that from the estimation (\ref{model4}) applied to the approximate solutions \(u_{n_k}\), we have uniform bounds on:
		\[
		\left\|\frac{\partial u_{n_k}^m}{\partial t}\right\|_{L^2(0,T;H^{-1}(\Omega_m))}, \quad \|\nabla u_{n_k}^m\|_{L^2(0,T;L^2(\Omega_m))}, \quad \left\|\frac{du_{n_k}^c}{dx}\right\|_{L^2(0,T;L^2(\Omega_c))} \quad\label{model11}
		\]
		
		From these bounds and the fact that \(u_{n_k} \rightharpoonup u\) weakly in \(V\), we can deduce that:
		\[
		u_{n_k}^m \rightharpoonup u^m \text{ weakly in } L^2(0,T;H_0^1(\Omega_m)) \quad \label{model14}
		\]
		\[
		\frac{\partial u_{n_k}^m}{\partial t} \rightharpoonup \frac{\partial u^m}{\partial t} \text{ weakly in } L^2(0,T;H^{-1}(\Omega_m)) \quad \label{model15}
		\]
		
		Applying the Lions-Magenes lemma, we conclude that \(u^m \in C(0,T;L^2(\Omega))\). Thus, we can meaningfully evaluate \(u^m(0)\).
		
		Recall that for each \(k\), we have: \(u_{n_k}^m(0,\cdot) = u_{0n_k}^m\), and by construction \(u_{0n_k}^m \to u_0^m\) in \(L^2(\Omega_m)\).
		
		For any function \(\varphi \in C^1([0,T])\) with \(\varphi(T) = 0\), and any \(w \in W\), we have:
		\[
		\int_0^T \left(\frac{\partial u_{n_k}^m}{\partial t}, w^m\right) \varphi(t) dt = \int_0^T -(u_{n_k}^m, w^m)\varphi'(t) dt + (u_{0n_k}^m, w^m)\varphi(0)
		\]
		by integration by parts. Taking the limit as \(k \to \infty\):
		\[
		\int_0^T \left(\frac{\partial u^m}{\partial t}, w^m\right) \varphi(t) dt = \int_0^T -(u^m, w^m)\varphi'(t) dt + (u_0, w^m)\varphi(0) \quad \label{model16}
		\]
		
		But, on the other hand, by definition, we have:
		\[
		\int_0^T \left(\frac{\partial u^m}{\partial t}, w^m\right) \varphi(t) dt = \int_0^T -(u^m, w^m)\varphi'(t) dt + (u^m(0), w^m)\varphi(0) \quad (\label{model17})
		\]
		
		Comparing these equations and noting that \(\varphi(0)\) can be arbitrary, we conclude:
		\[
		(u_0^m, w^m) = (u^m(0), w^m) \quad \text{for all } w^m \in H_0^1(\Omega_m) \quad \label{model18}
		\]
		
		Therefore, \(u^m(0) = u_0^m\). This completes the proof of Theorem \ref{thm1}, establishing both the existence and the correct initial condition for the solution.
	\end{proof}
	
	\subsection{Time discretization}
	
	Let \((t_p)_{0\leq p\leq N}\) be a discretization of \([0,T]\) that satisfies: \(0 = t_0 < t_1 < \dots < t_{N-1} < t_N = T\). Set \(\tau_p = t_p - t_{p-1}\) for any \(p\in \{1,2,\ldots ,N\}\).
	
	Using backward Euler scheme, the semi-discrete approximation of the continuous problem (\ref{model3}) consists in finding the sequence \((u_p)_{1\leq p\leq N}\) of \(W\) such that: for any \(p\in \{1,2,\ldots ,N\}\)
	\begin{equation}\label{19}
		a_p(u_p,w) = S(u_{p-1}^m,w^m) + \tau_p(R_p,w), \quad \forall w\in W,
	\end{equation}
	with \(R_p = R(t_p,\cdot)\), \(a_p(v,w) = a_p^m(v,w^m) + \tau_p a^c(v,w^c)\) and \(a_p^m(v,w^m) = S(v^m,w^m) + \tau_p a^m(v,w^m)\).
	
	For a fixed \(p\) and for the norm \(\| \cdot \|_{W}\), the bilinear form \(a_p\) is continuous on \(W\times W\) and coercive on \(W\) and the linear form \(w \mapsto S(u_{p-1}^m,w^m) + \tau_p(R_p,w)\) is continuous on \(W\); thus the Lax-Milgram theorem ensures that there exists a unique solution to the problem (\ref{19}).
	
	\subsection{Full discretization}\label{fulldis}
	
	Now the problem (\ref{19}) is discretized by some nonconforming finite element method on anisotropic mesh in the porous matrix \(\Omega_m\). For each \(p\in [1,N]\), let \((\mathcal{T}_{ph})_{h>0}\) be a family of conforming triangulation (in the standard sense of \cite{ciarlet2002finite}) of \(\Omega_m\) based on anisotropic tetrahedra, anisotropic rectangular hexahedra, or anisotropic rectangular pentahedra with mesh size \(h > 0\); and \(\mathcal{P}_{ph}\) a family of triangulation of \(\Omega_c\) based on segments that have nodes belonging to the mesh \(\mathcal{T}_{ph}\).
	
	We assume that the anisotropic nature of each element in \(\mathcal{T}_{ph}\) satisfies the conditions described in Sections 3.3 and 3.4 of \cite{creuse2004posteriori}.
	
	Each element of the triangulation \(\mathcal{T}_{ph}\) is denoted by \(T\) or \(T_i\). The set of faces of an element \(T\) is denoted by \(\mathcal{F}_T\) and the set of edges of an element/face \(T\) is denoted by \(\mathcal{E}_T\).
	
	We denote by \(\mathcal{F}_{ph}\) the set of all faces in \(\mathcal{T}_{ph}\); by \(\mathcal{F}_{ph}^{\mathrm{int}}\) the set of all interior faces; by \(\mathcal{F}_{ph}^{\square}\) the set of all rectangular faces; by \(\mathcal{E}\) the set of all edges; by \(\omega_F\) the set of all elements sharing an edge/face \(F\); and by \(\omega_x\) the set of all elements sharing a node \(x\) of the mesh \(\mathcal{T}_{ph}\) or \(\mathcal{P}_{ph}\).
	
	We approach the space \(W\) by a functional space \(W_{ph} = W_{ph}^m \times W_{ph}^c\), with \(W_{ph}^m\) and \(W_{ph}^c\) spaces of continuous, piecewise linear functions over \(\mathcal{T}_{ph}\) and \(\mathcal{P}_{ph}\) respectively. The space \(W_{ph}\) will be defined for some variants of the Crouzeix-Raviart finite element methods in Subsection 3.3. For each method, we will provide the appropriate approximation space \(W_{ph}\), and equip it with the following norm:
	\[
	\| w_{ph}\|_{W_{ph}^m} = \| w^m\|_{W_{ph}^m} + \| w^c\|_{W_{ph}^c}
	\]
	\[
	\| w^m\|_{W_{ph}^m} = \sum_{T\in \mathcal{T}_{ph}} \| \nabla w_{ph}^m\|_{T}
	\]
	\[
	\| w^c\|_{W_{ph}^c} = \sum_{E\in \mathcal{P}_{ph}} \| \nabla w_{ph}^c\|_{E}.
	\]
	
	Here, \(\nabla w_{ph}^m\) and \(\nabla w_{ph}^c\) denote broken gradients, computed element-wise on \(\mathcal{T}_{ph}\) and \(\mathcal{P}_{ph}\), respectively. This allows for discontinuities of the gradient across element interfaces.
	
	We consider the following bilinear form \(a_{ph}\) defined on \(W_{ph}\times W_{ph}\) by: \(a_{ph}(v_{ph},w_{ph}) = a_{ph}^m(v_{ph},w_{ph}) + \tau_p a^c(v_{ph},w_{ph}^c)\) where:
	\[
	a_{ph}^m(v_{ph},w_{ph}^m) = \sum_{T\in \mathcal{T}_{ph}} \left[ S\int_T v_{ph}^m w_{ph}^m + \tau_p K\int_T \nabla v_{ph}^m \cdot \nabla w_{ph}^m \right] + \tau_p \sum_{E\in \mathcal{P}_{ph}} \alpha_{ex}\int_E \left[ \Pi(v_{ph}^m w_{ph}^m) - v_{ph}^c \Pi w_{ph}^m \right]
	\]
	
	The full discretization of the problem (\ref{model2}) has a unique weak solution satisfying: given an approximation \(u_{0h}^m\) of a given function \(u_0^m \in H_0^1(\Omega_m)\), find a sequence \((u_{ph})_{1\leq p\leq N}\) in \(W_{ph}\) such that:
	\begin{equation}\label{20'}
		a_{ph}(u_{ph},w_{ph}) = \tau_p (R_p,w_{ph}) + S(u_{p-1,h}^m, w_{ph}^m)_{\Omega_m}, \quad \forall w_{ph} \in W_{ph}.
	\end{equation}
	
	For any \(p\) and \(h > 0\), the bilinear form \(a_{ph}\) is continuous on \(W_{ph} \times W_{ph}\) and coercive in \(W_{ph}\), and the linear form \(w_{ph} \mapsto S(u_{p-1,h}^m, w_{ph}^m) + \tau_p(R_p,w_{ph})\) is continuous in \(W_{ph}\); thus the Lax-Milgram theorem guarantees that the problem (\ref{20'}) has a unique solution. In indeed, we have the following theorem:
	\begin{theorem}
	Let $p\in \{1,\ldots,N\}$ be fixed. \\
	Assume that 
	$$S> 0, \mbox{  } K>0, \mbox{   } D> 0,\mbox{  } \alpha_{ex}> 0,$$
	and let $$W_{ph}=W_{ph}^m\times W_{ph}^c$$ be the finite-dimensional discrete space defined in Section \ref{fulldis}.\\ Suppose that the bilinear form $a_{ph}$ satisfies 
	$$a_{ph}(v_{ph},v_{ph})\geq C_0 \parallel v_{ph}\parallel_{W_{ph}}^2, \quad \forall v_{ph}\in W_{ph},$$
	for some constant $C_0> 0$ independent of $v_{ph}$.\\
	Then the fully discrete problem
	\begin{eqnarray}
		a_{ph}(u_{ph}^p,w_{ph}^p)=\tau_p(R^p,w_{ph})+S(u_{p-1,h}^{m},w_{ph}^m)_{\Omega_m}, \quad \forall w_{ph}\in W_{ph}
	\end{eqnarray}
	admits a unique solution $$u_{ph}^p\in W_{ph}.$$
	\end{theorem}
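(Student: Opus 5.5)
The plan is to treat the fully discrete problem at a fixed time level $p$ as a square linear system on the finite-dimensional space $W_{ph}$, and to deduce existence from uniqueness. The data from the previous step, $u_{p-1,h}^m$, is assumed already known. For $p=1$ it is $u_{0h}^m$; for $p\geq 2$ it is produced by the same argument at level $p-1$, so the whole sequence $(u_{ph})_{1\leq p\leq N}$ is obtained by induction on $p$.

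\textbf{Step 1 (the right-hand side is a linear functional).} The right-hand side $\ell_p(w_{ph}) := \tau_p (R_p,w_{ph}) + S(u_{p-1,h}^m, w_{ph}^m)_{\Omega_m}$ is linear in $w_{ph}$, and the bilinear form $a_{ph}$ is bilinear by its definition. Since $W_{ph}$ is finite-dimensional, every linear functional on it is automatically continuous, and so is $a_{ph}$. This avoids any delicate bound on $(R_p,w_{ph})$ for nonconforming $w_{ph}$ when $R_p$ is only in $H^{-1}$. If $R_p$ is merely in $H^{-1}$, the pairing with a nonconforming function must be given a meaning; I would assume, as the paper implicitly does, that $R_p\in L^2$, or that some discrete representative is used.

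\textbf{Step 2 (reduction to a matrix system).} Fix a basis $\{\phi_1,\dots,\phi_M\}$ of $W_{ph}$ and write $u_{ph}^p=\sum_j \beta_j\phi_j$. The problem holds for all $w_{ph}$ if and only if it holds for each $\phi_i$. This is equivalent to the system $A\beta = b$ with $A_{ij}=a_{ph}(\phi_j,\phi_i)$ and $b_i=\ell_p(\phi_i)$.

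\textbf{Step 3 (injectivity of $A$).} Suppose $A\beta=0$ and set $v_{ph}=\sum_j\beta_j\phi_j$. Then $a_{ph}(v_{ph},w_{ph})=0$ for all $w_{ph}$, and taking $w_{ph}=v_{ph}$ the assumed coercivity gives $C_0\|v_{ph}\|_{W_{ph}}^2\leq 0$, so $\|v_{ph}\|_{W_{ph}}=0$. Alternatively, one can simply invoke Lax--Milgram directly on the Hilbert space $(W_{ph},\|\cdot\|_{W_{ph}})$, which gives existence and uniqueness in one stroke.

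\textbf{Step 4 (existence).} Once $v_{ph}=0$ in Step 3, $\beta=0$, so the square matrix $A$ is injective, hence invertible. The system therefore has exactly one solution, which gives existence and uniqueness of $u_{ph}^p$.

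The main obstacle is making sure $\|\cdot\|_{W_{ph}}$ is genuinely a norm on $W_{ph}$, and not just a seminorm. This is needed both for Step 3 to conclude $v_{ph}=0$ and for the Lax--Milgram variant. On the broken space, $\sum_T\|\nabla v^m\|_T=0$ only says $v^m$ is piecewise constant. I would argue as follows:
\begin{itemize}
\item The Crouzeix--Raviart continuity of face means across interior faces forces these constants to agree.
\item The vanishing boundary face means then force the common constant to be zero.
\item Similarly, $v^c$ is continuous and piecewise linear with zero boundary values, so $\partial_x v^c=0$ gives $v^c=0$.
\end{itemize}
If one prefers not to rely on this, the term $S\int v^m v^m$ in $a_{ph}$ with $S>0$ already yields $v^m=0$ directly from $a_{ph}(v_{ph},v_{ph})=0$. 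The remaining nonnegative terms, namely the exchange term rewritten as a square via the identity for $\Pi$, then force $v^c=0$ together with $D>0$ and the boundary conditions.
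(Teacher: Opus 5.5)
Your argument is correct. It differs from the paper's mainly in what it needs to verify, since in finite dimension Lax--Milgram reduces to exactly your injectivity-implies-invertibility argument.

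The paper invokes Lax--Milgram. It proves continuity of $a_{ph}$ via Cauchy--Schwarz and the bound $\|\Pi v\|_{L^2(a,b)}\le C_\Pi\|v\|_{H^1(\Omega_m)}$. It then re-derives the coercivity that the statement already assumes: it expands $a_{ph}(v_{ph},v_{ph})$, drops the nonnegative exchange square, and uses a discrete Poincaré inequality on the nonconforming space. Finally it calls continuity of the right-hand side trivial.

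You instead use the square Galerkin matrix and invoke the hypothesis only for injectivity. This requires no continuity estimate at all. In particular, you avoid applying the Appendix bound on $\Pi$, which is stated only for $H_0^1(\Omega_m)$, to broken functions. Your fallback observation is that
\[
a_{ph}(v_{ph},v_{ph})=S\|v^m\|^2+\tau_pK\|\nabla_h v^m\|^2+\tau_pD\|\partial_x v^c\|^2+\frac{\tau_p\alpha_{ex}}{2\pi}\int_a^b\int_0^{2\pi}\bigl(v^m(s_0(x,\theta))-v^c(x)\bigr)^2\,d\theta\,dx
\]
vanishes only for $v_{ph}=0$. Indeed, the $S$-term kills $v^m$, and the exchange term then kills $v^c$. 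This shows unique solvability needs none of the following: the quantitative coercivity hypothesis, a discrete Poincaré inequality, or the boundary face-mean condition that the paper's definitions of $W_{ph}^m$ leave implicit.

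What the coercivity route adds is the stability bound $\|u_{ph}^p\|_{W_{ph}}\le C_0^{-1}\sup_{w_{ph}\neq 0}\ell_p(w_{ph})/\|w_{ph}\|_{W_{ph}}$. You obtain this at once by testing with $w_{ph}=u_{ph}^p$.

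Your caveat that $(R_p,w_{ph})$ is undefined for nonconforming $w_{ph}$ when $R_p$ is only in $H^{-1}$ identifies a real omission in the paper's statement, not in your proof.

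One minor point: the paper's $\|\cdot\|_{W_{ph}}$ is a plain sum of local norms, not an inner-product norm. For your Lax--Milgram variant, use an equivalent Hilbert norm instead, which is harmless in finite dimension.
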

	\begin{proof}
	\mbox{ }\\
	\textbf{Step 1. Definition of the discrete bilinear form.}\\  For $v_{ph}, w_{ph}\in W_{ph}$, define:
	$$
	a_{ph}(v_{ph},w_{ph})=a_{ph}^m(v_{ph},w_{ph}^m)+\tau_p a_{ph}^p(v_{ph},w_{ph}^c),
	$$
	where 
	\begin{eqnarray}
	a_{ph}^m(v_{ph},w_{ph}^m)&=&\displaystyle\sum_{T\in\mathcal{T}_{ph}}\left[
	S\int_T v_{ph}^m w_{ph}^m +\tau_p  K\int_T \nabla_h v_{ph}^m\cdot\nabla_h w_{ph}^m
	\right]\\
	&+&\tau_p\alpha_{ex}\displaystyle\sum_{E\in\mathcal{P}_{ph}}\int_E\left(
	\Pi(v_{ph}^m w_{ph}^m)-v_{ph}^c\Pi w_{ph}^m
	\right),
	\end{eqnarray}
	and
	 $$
	a_{ph}^c(v_{ph},w_{ph}^c)=\displaystyle\sum_{E\in\mathcal{P}_{ph}}\int_E
	\left(D\nabla_h v_{ph}^c\cdot \nabla_h w_{ph}^c+\alpha_{ex} v_{ph}^c w_{ph}^c\right)
	$$

	Since all integrals are finite sums over finite-dimensional polynomial, $a_{ph}$ is well defined.\\
	\textbf{Step 2. Continuity.}\\
	Let $v_{ph}, w_{ph\in W_{ph}}$. \\
	Using the Cauchy-Schwarz inequality, 
	$$\left|\displaystyle\sum_{T\in\mathcal{T}_{ph}} \int_T \nabla_h v_{ph}^m\cdot \nabla_h w_{ph}^m\right|
	\leq \parallel \nabla_h v_{ph}^m\parallel_{L^2(\Omega_m)}\parallel \nabla_h w_{ph}^m\parallel_{L^2(\Omega_m)}.
		$$
		Similarly,
		$$
		\left|\displaystyle\sum_{T\in\mathcal{T}_{ph}}\int_T v_{ph}^m w_{ph}^m\right|\leq \parallel v_{ph}^m\parallel_{L^2(\Omega_m)}\parallel w_{ph}^m\parallel_{L^2(\Omega_m)}.
		$$
		Since the averaging operator $$\Pi:H^1(\Omega_m)\rightarrow L^2(\Omega_c)$$
		is bounded,
		$$\parallel \Pi v\parallel_{L^2(\Omega_c)}\leq C_{\Pi}\parallel v\parallel_{H^1(\Omega_m)},$$
		the coupling terms satisfy:
		$$\left|\int_{E} v_{ph}^c \Pi w_{ph}^m\right|\leq C_{\Pi}\parallel v_{ph}^c\parallel_{L^2(E)}
		\parallel w_{ph}^m\parallel_{H^1(\Omega_m)}.
		$$
		Combining all estimates gives,
		$$|a_{ph(v_{ph},w_{ph})}|\leq C \parallel v_{ph}\parallel_{W_{ph}}\parallel w\parallel_{W_{ph}},$$
		for some constant $C> 0$.\\
		Hence $a_{ph}$ is continuous on $W_{ph}\times W_{ph}$.\\\\
		\textbf{Step 3. Coercivity. }\\
		Taking $w_{ph}=v_{ph}$ yields:
		\begin{eqnarray*}
			a_{ph}(v_{ph},v_{ph})&=&S\parallel v_{ph}^m\parallel_{L^2(\Omega)}^2+\tau_p K
			\parallel\nabla_h v_{ph}^m\parallel_{L^2(\Omega)}^2\\
			&+&
			\tau_p D\parallel \nabla_h v_{ph}^c\parallel_{L^2(\Omega_c)}^2+\tau_{p}\alpha_{ex}\parallel v_{ph}^c\parallel_{L^2(\Omega_c)}\\
			&+&
			\frac{\tau_p\alpha_{ex}}{2\pi}\int_{a}^b\int_{0}^{2\pi}\left(v_{ph}^m(s_0(x,\theta))-v_{ph}^c\right)^2 d\theta dx.
		\end{eqnarray*}
	The last term is nonnegative.\\
	Therefore, 
	\begin{eqnarray*}
	a_{ph}(v_{ph},v_{ph})&\geq& S \parallel v_{ph}^m\parallel_{L^2(\Omega_m)}^2+\tau_p K
	\parallel \nabla_h v_{ph}^m\parallel_{L^2(\Omega_m)}^2\\
	&+& \tau_p D\parallel \nabla_h v_{ph}^c\parallel_{L^2(\Omega_c)}^2.
	\end{eqnarray*}
Using the discrete Poincaré inequality for nonconforming finite element space, 
$$\parallel v_{ph}^m\parallel_{L^2(\Omega_m)}\leq C_p \parallel \nabla_h v_{ph}^m\parallel_{L^2(\Omega_m)},$$
and  the standard Poincaré inequality on $\Omega_c$,
$$\parallel v_{ph}^c\parallel_{L^2(\Omega_c)}\leq C_p^c\parallel \nabla_h v_{ph}^m\parallel_{L^2(\Omega_c)},$$
we obtain, 
$$a_{ph}(v_{ph},v_{ph})\geq C_0\left(\parallel \nabla_h v_{ph}^m\parallel_{L^2(\Omega_m)}^2+
\parallel \nabla_h v_{ph}^c\parallel_{L^2(\Omega_c)}^2
\right)$$
for some constant $C_0> 0$. Hence,
$$a_{ph}(v_{ph},v_{ph})\geq C_0\parallel v_{ph}\parallel_{W_{ph}}^2.$$
Thus, $a_{ph}$ is coercive. 
Continuity of the right-hand side is trivially. \\
The proof is complete. 

	\end{proof}
	\subsection{Examples of finite elements for the space \(W_{ph}^m\)}
	
	Defining a different finite element does not matter for the a-posteriori error analysis to be performed later. The difference occurs in its implementation.
	
	As in the standard theory, a finite element is denoted by a triplet \((T,P,\Sigma)\) where \(T\) is a domain, \(P\) denotes a space of functions, and \(\Sigma\) is a set of functionals of \(P^*\) (space of linear forms defined on \(P\)) \cite{ciarlet2002finite}.
	
	\subsubsection{Crouzeix-Raviart elements I}
	
	For a triangulation \(\mathcal{T}_{ph}\) of \(\Omega_m\) consisting of tetrahedra, we approximate the hydraulic head in porous matrix \(\Omega_m\) in the Crouzeix-Raviart element space and the hydraulic head in pipe conduit in the space of piecewise functions, namely:
	\begin{eqnarray}\label{21}
		\begin{array}{l}
			W_{ph}^m = \{ u_{ph}^m \in L^2(\Omega_m) : u_{ph}^m|_T \in P_1(T), \forall T \in \mathcal{T}_{ph}, \int_E [u_{ph}^m] = 0, \forall E \in \mathcal{E}\},\\\quad
			W_{ph}^c = \{ u_{ph}^c \in C^0(\Omega_c) : u_{ph}^c|_E \in P_1(E), \forall E \in \mathcal{P}_{ph} \text{ and } u_{ph}^c|_{\Omega_c} = 0 \}.
		\end{array}
	\end{eqnarray}
	
	\subsubsection{Modified Crouzeix-Raviart elements on pentahedra}
	
	Consider a triangulation \(\mathcal{T}_{ph}\) of porous matrix \(\Omega_m\) made of rectangular pentahedra. On the reference pentahedron \(\widehat{T}\), one defines a finite element \((\widehat{T},\widehat{P},\widehat{\Sigma})\) by setting: \(\widehat{P} = P_2\) and \(\widehat{\Sigma} = \{l_i\}_{1\leq i\leq 10}\) where:
	\[
	l_i(p) = \int_{\widehat{E}_i} p, \text{ for } i = 1,\dots,5,\quad l_6(p) = \int_{\widehat{T}} p,\quad l_{i+6}(p) = \int_{\widehat{T}} p \tilde{q}_i, \text{ for } i = 1,\dots,4,
	\]
	with \(\widehat{E}_i\) a face of pentahedron \(\widehat{T}\) and
	\[
	\tilde{q}_1(x,y,z) = 1-3x-2z+6xz,\quad \tilde{q}_2(x,y,z) = 1-3y-2z+6yz,
	\]
	\[
	\tilde{q}_3(x,y,z) = 1-4x-2y+6xy+3x^2,\quad \tilde{q}_4(x,y,z) = 2x-2y-3x^2+3y^2.
	\]
	
	Note that \(l_i(\tilde{q}_j) = 0\) for \(i = 1,\dots,6\) and \(j = 1,\dots,4\).
	
	Enumerate the faces \(\widehat{E}_i\) of \(\widehat{T}\) such that \(\widehat{E}_1, \widehat{E}_2, \widehat{E}_3, \widehat{E}_4, \widehat{E}_5\) are contained in the planes \(z = 0\), \(z = 1\), \(y = 0\), \(x = 0\) and \(x + y = 1\) respectively. There exists an associated basis \(\{q_i\}_{1\leq i\leq 10}\); here we need to specify the first six functions:
	\[
	q_1(x,y,z) = 2-8z+6z^2,\quad q_4(x,y,z) = 1-6x+6z,
	\]
	\[
	q_2(x,y,z) = 2(-2z+3z^2),\quad q_5(x,y,z) = 1-6(x-x^2+y-y^2-2xy)/2,
	\]
	\[
	q_3(x,y,z) = 1-6y+6y^2,\quad q_6(x,y,z) = 24(x-x^2+y-y^2-xy)+12(z-z^2)-6.
	\]
	
	The main interest is that \(q_1, q_2\) do not depend on \(x, y\) and conversely \(q_3, q_4, q_5, q_6\) do not depend on \(z\).
	
	Consider now the actual (anisotropic) pentahedron \(T\) which can be obtained from the reference pentahedron by an affine transformation. In this way, the finite element \((T,P,\Sigma)\) is also defined, i.e. one has \(q_i(x,y,z) = \hat{q}_i(\hat{x},\hat{y},\hat{z})\) and \(l_i(q) = \hat{l}_i(\hat{q})\).
	
	At this stage, define the approximation space in porous matrix \(W_{ph}^m\) by:
	\[
	W_{ph}^m = \{ w_{ph}^m \in L^2(\Omega_m) : w_{ph}^m|_T \in P_2, \forall T \in \mathcal{T}_{ph}, \int_E [w_{ph}^m]_E = 0, \forall E \in \mathcal{P}_{ph} \},
	\]
	and the approximation space in pipe \(W_{ph}^c\) again by (\ref{21}).
	
	\subsubsection{Modified Crouzeix-Raviart elements on hexahedra}
	
	Consider the 3D case and a triangulation of \(\Omega\) made of rectangular hexahedra. On the reference rectangular hexahedron \(\widehat{T} = (a,b)^3\), define a finite element \((\widehat{T},\widehat{P},\widehat{\Sigma})\) by setting: \(\widehat{P} = \mathbb{P}^2 + \operatorname{span}\{\hat{x}\hat{y}\hat{z}\}\) and \(\widehat{\Sigma} = \{\hat{I}_i\}_{1\leq i\leq 11}\) defined by:
	\[
	\hat{I}_i(p) := \int_{\hat{E}_i} p, \text{ for } i = 1,\ldots ,6,\quad \hat{I}_7(p) := \int_{\widehat{T}} p,\quad \hat{I}_{7+i}(p) := \int_{\widehat{T}} p \hat{q}_i, \text{ for } i = 1,\ldots ,4
	\]
	where
	\[
	\begin{array}{r}
		\hat{q}_1(\hat{x},\hat{y},\hat{z}) := 1 - 2\hat{x} -2\hat{z} +4\hat{x}\hat{z},\quad \hat{q}_2(\hat{x},\hat{y},\hat{z}) := 1 - 2\hat{y} -2\hat{z} +4\hat{y}\hat{z}\\
		\hat{q}_3(\hat{x},\hat{y},\hat{z}) := 1 - 2\hat{x} -2\hat{y} +4\hat{x}\hat{y},\quad \hat{q}_4(\hat{x},\hat{y},\hat{z}) := 1 - \hat{x} -\hat{y} -\hat{z} +4\hat{x}\hat{y}\hat{z}.
	\end{array} \quad (4)
	\]
	
	The above choice is motivated by the fact that \(\hat{I}_i(\hat{q}_j) = 0\) for \(i = 1,\ldots ,7\) and \(j = 1,\ldots ,4\).
	
	Now denote the faces \(\hat{E}_1,\hat{E}_2,\ldots ,\hat{E}_6\) such that they are included in the planes, \(\hat{z} = 0,\hat{z} = 1,\hat{y} = 0,\hat{y} = 1,\hat{x} = 0,\hat{x} = 1\), respectively. Then there exists an associated basis \(\{\hat{q}_i\}_{1\leq i\leq 11}\) whose first six entries are of particular interest, and given by:
	\[
	\hat{q}_1(\hat{x},\hat{y},\hat{z}) = \hat{h}(\hat{z}),\quad \hat{q}_3(\hat{x},\hat{y},\hat{z}) = \hat{h}(\hat{y}),\quad \hat{q}_5(\hat{x},\hat{y},\hat{z}) = \hat{h}(\hat{x}),
	\]
	\[
	\hat{q}_2(\hat{x},\hat{y},\hat{z}) = \hat{h}(1-\hat{z}),\quad \hat{q}_4(\hat{x},\hat{y},\hat{z}) = \hat{h}(1-\hat{y}),\quad \hat{q}_6(\hat{x},\hat{y},\hat{z}) = \hat{h}(1-\hat{x})
	\]
	where we set \(h(t) = -2t + 3t^2\). Again the distinct feature is that \(\hat{q}_1,\hat{q}_2\) depend only on \(\hat{z}\), that \(\hat{q}_3,\hat{q}_4\) depend only on \(\hat{y}\), and \(\hat{q}_5,\hat{q}_6\) depend only on \(\hat{x}\).
	
	The finite element \((T,P,\Sigma)\) on the actual hexahedron \(T\) is obtained by the usual affine transformation.
	
	To define the pair \((W_{ph}^m,W_{ph}^c)\) set:
	\[
	W_{ph}^m = \{ w_{ph}^m \in L^2(\Omega_m) : w_{ph}^m|_T \in [\mathbb{P}^2 + \operatorname{span}\{\hat{x}\hat{y}\hat{z}\}], \forall T \in \mathcal{T}_{ph}, \int_E [w_{ph}^m] = 0, \forall E \in \mathcal{E} \}
	\]
	and \(W_{ph}^c\) defined by (\ref{21}).
	
	\section{Analytical tools}\label{sec4}
	
	We generalize the approximation spaces \(W_{ph}^m\) by allowing the local polynomial degree to be arbitrary. Specifically, we define:
	\[
	W_{ph}^{m,k} = \{ w_{ph}^m \in L^2(\Omega_m) : w_{ph}^m|_T \in \mathbb{P}^k(T), \forall T \in \mathcal{T}_{ph}, \int_E [w_{ph}^m] = 0, \forall E \in \mathcal{E} \},
	\]
	\(k\) an arbitrary integer.
	
	\subsection{Lagrange Interpolation Operator}
	
	We know that \(\Omega_c \subset \mathbb{R}\) and thanks to Sobolev injection, each element of \(H^1(\Omega_c)\) is a \(C^0(\overline{\Omega_c})\) function defined on \(\Omega_c\). Hence, we can define Lagrange interpolant for any \(v \in H^1(\Omega_c)\).
	
	Denote by \(a_0,a_1,a_2,\ldots ,a_{N_c}\) the nodes of the mesh \(\mathcal{P}_{ph}\), and assume that \(a_0 \leq a_1 \leq \dots \leq a_{N_c}\). For each \(i = 0,\ldots ,N_c-1\), define the sub-interval \(E_i = [a_i,a_{i+1}]\) and let \(h_i = |a_{i+1} - a_i|\) denote its length, and the mesh size of \(\mathcal{P}_{ph}\) by \(h_c = \max_{0\leq i\leq N_c} h_i\).
	
	Let \((\phi_i)_{0\leq i\leq N_c}\) be the Lagrange basis of the finite element space \(W_{ph}^c\), consisting of piecewise polynomial functions satisfying the interpolation condition:
	\[
	\phi_i(a_j) = \delta_{ij}, \quad \forall i,j\in \{0,1,\ldots ,N_c\}.
	\]
	
	\begin{definition}
		The Lagrange interpolation operator \(I_L\) is defined from \(H_0^1(\Omega_c)\) into \(W_{ph}^c\) by:
		\[
		I_L v^c = \sum_{i=0}^{N_c} v^c(a_i)\phi_i.
		\]
	\end{definition}
	
	\begin{theorem}[Lagrange Interpolation Estimate for Conduit Domain]
		Let \(v^c \in H_0^1(\Omega_c)\). Then:
		\begin{equation}
			\sum_{E\in \mathcal{P}_{ph}} h_E^{-2} \| v^c - I_L v^c \|_{L^2(E)}^2 \lesssim \| v^c \|_{H^1(\Omega_c)}^2. \quad \label{23}
		\end{equation}
	\end{theorem}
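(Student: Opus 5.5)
The estimate is purely local, so the plan is to prove a one-dimensional bound on each sub-interval $E=E_i=[a_i,a_{i+1}]$ of $\mathcal{P}_{ph}$ and then sum. The target local inequality is
\[
\| v^c - I_L v^c\|_{L^2(E)} \lesssim h_E \,\| (v^c)'\|_{L^2(E)},
\]
with a constant independent of $E$. Squaring, multiplying by $h_E^{-2}$ and summing over $E\in\mathcal{P}_{ph}$ gives $\sum_E h_E^{-2}\|v^c-I_Lv^c\|_{L^2(E)}^2\lesssim \|(v^c)'\|_{L^2(\Omega_c)}^2\le \|v^c\|_{H^1(\Omega_c)}^2$, which is the statement. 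Note that $I_Lv^c$ is well defined because $H^1(\Omega_c)\hookrightarrow C^0(\overline{\Omega_c})$ in one dimension, as recalled before the definition. Moreover $I_Lv^c$ lies in $W^c_{ph}$, since $v^c(a_0)=v^c(a_{N_c})=0$.

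To obtain the local inequality, set $e=v^c-I_Lv^c$ on $E$. Then $e\in H^1(E)$ and $e(a_i)=e(a_{i+1})=0$, so the one-dimensional Poincaré (Friedrichs) inequality for functions vanishing at the endpoints applies. From $e(x)=\int_{a_i}^x e'(s)\,ds$ and Cauchy--Schwarz we get $|e(x)|^2\le h_E\|e'\|_{L^2(E)}^2$, hence $\|e\|_{L^2(E)}\le h_E\|e'\|_{L^2(E)}$. (The sharp constant $h_E/\pi$ is not needed.)

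It remains to bound $\|e'\|_{L^2(E)}$ by $\|(v^c)'\|_{L^2(E)}$. On $E$, $I_Lv^c$ is affine, with slope equal to the mean value $\frac{1}{h_E}\int_E (v^c)'$. Thus $e'=(v^c)'-\overline{(v^c)'}_E$ is the $L^2(E)$-orthogonal projection of $(v^c)'$ onto mean-zero functions, so $\|e'\|_{L^2(E)}\le\|(v^c)'\|_{L^2(E)}$ with constant exactly $1$.

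No step is a genuine obstacle; the only point needing care is that all constants are scale-free. This is automatic here because the argument is done directly on $E$ rather than through a reference element. Consequently no mesh-regularity assumption on $\mathcal{P}_{ph}$ is required, and the anisotropy of $\mathcal{T}_{ph}$ plays no role. One can even keep the sharper right-hand side $|v^c|_{H^1(\Omega_c)}^2$.
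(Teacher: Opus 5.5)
Your proof is correct. The paper gives no proof of this theorem at all: it states it as a standard interpolation result and passes directly to the alignment measure. So there is no argument of the authors' to compare with, and yours fills the gap.

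The conventional justification would be the reference-element route. One maps $E$ affinely onto $(0,1)$ and notes that $\hat v\mapsto \hat v-\hat I_L\hat v$ is bounded on $H^1(0,1)$ and annihilates constants. A Deny--Lions/Bramble--Hilbert argument then applies, and one scales back. In one dimension every affine map is a pure dilation, so this route also needs no regularity assumption on $\mathcal{P}_{ph}$.

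Your direct argument on $E$ combines two facts. First, the Friedrichs inequality holds for $e=v^c-I_Lv^c$, which vanishes at both endpoints. Second, $e'=(v^c)'-h_E^{-1}\int_E (v^c)'$ is the $L^2(E)$-orthogonal projection of $(v^c)'$ onto mean-zero functions. This is more elementary and gives more:
\begin{itemize}
\item an explicit constant (here $1$);
\item the seminorm $|v^c|_{H^1(\Omega_c)}$ on the right-hand side;
\item validity for every $v^c\in H^1(\Omega_c)$, with the boundary condition used only to ensure $I_Lv^c\in W^c_{ph}$.
\end{itemize}

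Your argument also makes one limitation of the statement visible. You obtain $\|e'\|_{L^2(E)}\le\|(v^c)'\|_{L^2(E)}$ with no power of $h_E$. Hence this estimate cannot by itself produce the $O(h_c)$ energy-norm bound that the paper later draws from it in Theorem~\ref{th}. That bound requires $v^c\in H^2(\Omega_c)$ and the same reasoning one derivative up: $\|e'\|_{L^2(E)}\le h_E\|(v^c)''\|_{L^2(E)}$, since $e'$ has zero mean on $E$ and $e''=(v^c)''$.
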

	
	\subsection{Alignment measure}
	
	When performing error analysis on anisotropic meshes, we aim to proceed in a manner similar to the isotropic case. To this end, we derive interpolation estimates and quickly observe that some results resemble those obtained for isotropic meshes. However, a key difference arises: an additional factor appears in the anisotropic setting. This factor reflects how well the anisotropic mesh aligns with the anisotropy of the function. It plays a crucial role in determining the quality (either improvement or degradation) of subsequent estimates. Following the approach introduced in \cite{kunert2000posteriori}, we define the measure of alignment of high-order as follows.
	
	\begin{definition}[Higher-Order Alignment Measure]
		Let \(\mathcal{T}\) be a family of triangulation of \(\Omega_m\). For \((v,\mathcal{T})\in H^1(\Omega_m)\times \mathcal{T}\), the alignment measure is defined as:
		\begin{equation}
			m_1(v,\mathcal{T}) := \frac{\left(\displaystyle\sum_{T\in\mathcal{T}} h_{\min,T}^{-2} \| \nabla v\cdot C_T\|_{T}^2\right)^{1/2}}{\| \nabla v\|}. \quad \label{24}
		\end{equation}
	\end{definition}
	
	Since the maximum singular value of \(C_T\) is \(h_{\max,T} = h_{1,T}\), for any \(v\in H^1(\Omega_m)\) with \(\nabla v \neq 0\), the alignment measure satisfies:
	\begin{equation}
		1 \leq m_2(v,\mathcal{T}) \leq \max_{T\in \mathcal{T}} \frac{h_{\max,T}}{h_{\min,T}}. \quad \label{25}
	\end{equation}
	
	This implies:
	\begin{enumerate}
		\item For isotropic meshes where \(h_{\max,T} = h_{\min,T}\) for all \(T\): \(m_1(v,\mathcal{T}) = 1\).
		\item For highly anisotropic elements: \(m_1(v,\mathcal{T})\) can grow as the aspect ratio. For instance, for moderate anisotropy \((h_{\max}/h_{\min}\approx 10)\): \(m_1 \leq 10\); and for strong anisotropy \((h_{\max}/h_{\min}\approx 100)\): \(m_1 \leq 100\).
	\end{enumerate}
	
	Consequently, a well-aligned mesh \(\mathcal{T}\) with an anisotropic function results in a small aspect ratio ensuring the smallest of \(m_1(v,\mathcal{T})\). The crude upper bound of \(m_1\) confirms (\ref{25}) that is a natural extension of isotropic meshes.
	
	\subsection{Clément interpolation}
	
	\begin{definition}
		Let \(N_{\Omega_m}\) and \(N_{\partial\Omega}\) be respectively the set of interior nodes and boundary nodes.
		
		For each node \(x_j \in N_{\Omega_m} \cup N_{\partial\Omega}\), we define the local \(L^2\)-projection operator \(P_j^{(k)} : L^2(\omega_{x_j}) \to P_k(\omega_{x_j})\) by requiring orthogonality against all polynomials of degree at most \(k-1\):
		\begin{equation}\label{26}
			\int_{\omega_{x_j}} (v - P_j^{(k)}v) \cdot q \, dx = 0 \quad \forall q \in P_{k-1}(\omega_{x_j}).  
		\end{equation}
		
		For any integer \(k \geq 1\), the interpolation operator of Clément \(I_{\mathrm{Cl}}^0\) is defined from \(H_0^1(\Omega_m)\) into \(W_{ph}^{m,k} \cap H_0^1(\Omega_m)\) by:
		\begin{equation}
			I_{\mathrm{Cl}}^0 v := \sum_{x_j \in N_{\Omega_m} \cup N_{\partial\Omega}} P_j^{(k)}(v)(x_j) \phi_j^{(k)},  \label{27}
		\end{equation}
		where \(\phi_j^{(k)} \in W_{\mathrm{Cl}}^{m,k}\) are the higher-order nodal basis functions.
		
		Furthermore, for any integer \(k \geq 1\), the interpolation of Clément \(I_{\mathrm{Cl}}\) is defined from \(H^1(\Omega_m)\) into \(W_{ph}^{m,k} \cap H^1(\Omega_m)\) by:
		\begin{equation}
			I_{\mathrm{Cl}} v := \sum_{x_j \in N_{\Omega_m} \cup N_{\partial\Omega}} P_j^{(k)}(v)(x_j) \phi_j^{(k)}. \label{28}
		\end{equation}
		
		For vector-valued functions, these operators are defined component-wise.
	\end{definition}
	
	\begin{theorem}[Clément Interpolation Estimates for the Matrix Domain]
		Let \(k \geq 1\) be an integer. For all functions \(v_0 \in H_0^1(\Omega_m)\) and \(v \in [H_0^1(\Omega_m)]^3\), the Clément interpolation operator \(I_{\mathrm{Cl}}^0\) satisfies the following estimates:
		\begin{align}
			\sum_{T\in \mathcal{T}} h_{\min,T}^{-2} \| v_0 - I_{\mathrm{Cl}}^0 v_0\|_{T}^2 &\lesssim m_1^2(v_0,\mathcal{T}_{ph}) \| \nabla v_0\|^2; \quad \label{29}\\
			\sum_{F\in \mathcal{F}} h_{\min,F}^2 \| v_0 - I_{\mathrm{Cl}}^0 v_0\|_{F}^2 &\lesssim m_1^2(v_0,\mathcal{F}) \| \nabla v_0\|^2; \quad \label{30}\\
			\sum_{F\in \mathcal{F}_{ph}} h_{\min,F}^2 \| v - I_{\mathrm{Cl}} v\|_{F}^2 &\lesssim m_1^2(v,\mathcal{F}_{ph}) \| v\|_{H_0^1(\Omega_m)}^2. \quad \label{31}
		\end{align}
	\end{theorem}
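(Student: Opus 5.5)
The plan follows Kunert's anisotropic Clément analysis \cite{kunert2000posteriori}, adapted to the present higher-order operator and the 3D element types of Table \ref{tab:elements}. Everything reduces to local estimates on patches $\omega_T=\bigcup_{x_j\in T}\omega_{x_j}$, which are then summed. The assumptions of \cite{creuse2004posteriori} make this summation legitimate. They ensure that the number of elements in each patch is uniformly bounded. They also ensure that neighbouring elements have comparable anisotropy vectors, i.e.\ $h_{i,T'}\sim h_{i,T}$ and $C_{T'}\approx C_T$ for $T'\subset\omega_T$. Consequently each element appears in only finitely many patches, and summing local bounds costs only a constant.

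\textbf{Step 1 (local $L^2$ estimate).} Fix $T$. I use the fact that $P_j^{(k)}$ reproduces constants, which follows from (\ref{26}) since $k-1\ge 0$, together with the partition of unity $\sum_j\phi_j^{(k)}=1$ near interior nodes. This lets me write
\[
v_0-I^0_{\mathrm{Cl}}v_0=\sum_{x_j\in T}\bigl(v_0-P_j^{(k)}v_0(x_j)\bigr)\phi_j^{(k)} \quad \text{on } T.
\]
Two ingredients then control the right-hand side. The first is the $L^\infty$ stability $|P_j^{(k)}w(x_j)|\lesssim |\omega_{x_j}|^{-1/2}\|w\|_{\omega_{x_j}}$, obtained on the reference patch and transferred by the affine map. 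The second is the bound $\|\phi_j^{(k)}\|_{\infty}\lesssim 1$. With these I reduce to estimating $\|v_0-\bar v_0\|_{\omega_{x_j}}$, where $\bar v_0$ is the patch mean. For this I use the anisotropic Poincaré inequality
\[
\|v_0-\bar v_0\|_{\omega_{x_j}}\lesssim \|\nabla v_0\cdot C_T\|_{\omega_{x_j}},
\]
which I prove by mapping to a reference patch via $C_T$, applying the isotropic Poincaré inequality there, and mapping back. The chain rule gives $\hat\nabla\hat v=\nabla v\,C_T$, and the Jacobian factors cancel. Multiplying by $h_{\min,T}^{-2}$ and summing over $T$ yields (\ref{29}) with exactly the numerator of $m_1^2$ in (\ref{24}).

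\textbf{Step 2 (boundary nodes).} For $I^0_{\mathrm{Cl}}$ the boundary-node coefficients vanish, so the partition-of-unity argument fails for patches touching $\partial\Omega_m$. In this case I replace the patch-mean Poincaré inequality by a Friedrichs-type inequality $\|v_0\|_{\omega_{x_j}}\lesssim\|\nabla v_0\cdot C_T\|_{\omega_{x_j}}$. This holds because $v_0=0$ on a face of the patch, and the mesh assumptions guarantee that this face is not degenerate relative to the patch.

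\textbf{Step 3 (face estimates (\ref{30}), (\ref{31})).} For a face $F\subset T$ I apply the anisotropic trace inequality
\[
\|w\|_F^2\lesssim h_{F,T}^{-1}\|w\|_T\bigl(\|w\|_T+\|\nabla w\cdot C_T\|_T\bigr),
\]
again proved by transformation to $\widehat T$, with $w=v-I_{\mathrm{Cl}}v$. I combine this with Step 1 and with the stability bound $\|\nabla(v-I_{\mathrm{Cl}}v)\cdot C_T\|_T\lesssim\|\nabla v\cdot C_T\|_{\omega_T}$, which follows from an inverse estimate on the polynomial part. The powers of $h$ must then be matched against the face weights appearing in (\ref{30})--(\ref{31}); I expect these weights should really be read as $h_{F,T}/h_{\min,T}^2$. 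The vector case (\ref{31}) follows componentwise.

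\textbf{Main obstacle.} The hardest step is the stability of $P_j^{(k)}$ at $x_j$, and the Poincaré constant, uniformly over anisotropic patches. A single affine map $C_T$ must flatten all elements of $\omega_{x_j}$ to shape-regular reference patches. This holds only under the alignment and compatibility conditions of \cite{creuse2004posteriori}, and for mixed element types one must also check that the reference patch configurations form a compact family. I would first verify this for tetrahedra. For prisms and hexahedra the maps are affine because the elements are rectangular, so the same argument carries over.
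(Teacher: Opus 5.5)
Your proposal is correct and follows essentially the same route as the paper, whose proof consists only of a reference to Kunert's anisotropic Cl\'ement analysis. You reproduce that argument (stability of the local projections, anisotropic Poincar\'e/Friedrichs and trace inequalities obtained by transforming with $C_T$, and summation under the neighbouring-element mesh assumptions), with the sensible reading that the $\phi_j^{(k)}$ form a Lagrange partition of unity. Your suspicion about the face weights is also right: the natural weight is $h_{F,T}/h_{\min,T}^2$, and the printed (dimensionally inconsistent) forms of (\ref{30})--(\ref{31}) follow from your version, since $h_{F,T}\gtrsim h_{\min,T}$ gives $h_{\min,F}^2\lesssim \operatorname{diam}(\Omega_m)^3\,h_{F,T}/h_{\min,T}^2$.
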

	\begin{proof}
		See \cite{kunert2000posteriori}.
	\end{proof}
	
	\subsection{Interpolation in \(W_{ph}\)}
	
	Since we work on a cartesian product spaces, it is important to define an operator to project the functions from \(W\) into \(W_{ph}\). Hence, we define the following interpolation operator.
	
	\begin{definition}
		For any \(\nu \in W\), its interpolant \(I_{ph}^0 \nu \in W_{ph}\) is defined by:
		\[
		I_{ph}^0 \nu = \left(I_{\mathrm{Cl}}^0 \nu^m, I_L \nu^c\right).
		\]
	\end{definition}
	
	\section{A Priori Error Analysis}\label{sec5}
	
	We define the total error at time \(t_p\) by \(u(t_p) - u_{ph}\) which can be decomposed as
	\[
	u(t_p) - u_{ph} = u(t_p) - u_p + u_p - u_{ph}
	\]
	where \(\theta_p := u(t_p) - u_p\) is the time discretization error, and \(\epsilon_p := u_p - u_{ph}\) is the space discretization error.
	
	\subsection{A-priori analysis of the time discretization}
	
	Let's introduce the linear functional \(F:[0,T]\times W \longrightarrow W^{*}\) such that the problem (\ref{model3}) is equivalent to:
	\begin{equation}
		\left\{
		\begin{array}{ll}
			S(\partial_t u^m(t,\cdot),w) = (F(t,u(t,\cdot)),w), & \forall w^m \in H_0^1(\Omega_m), \forall t\in [0,T]\\[5pt]
			a^c(u(t,\cdot),w^c) = (R^c(t,\cdot),w^c), & \forall w^c \in H_0^1(\Omega_c)
		\end{array}
		\right. \quad \label{32}
	\end{equation}
	where \(W^{*}\) is the dual space of \(W\).
	
	With the above notation, \(F\) is defined as:
	\[
	(F(t,\nu),w^m) = -a^m(\nu,w^m) + (R^m(t,\cdot),w^m), \quad \forall (\nu,w)\in W^2.
	\]
	
	The semi-discrete problem (\ref{19}) is equivalent to the following problem (\ref{33}) consisting in finding a sequence \((u_p)_{0\leq p\leq N}\) such that:
	\begin{equation}
		\left\{
		\begin{array}{ll}
			S\left(\dfrac{u_p^m - u_{p-1}^m}{\tau_p}, w^m\right) = (F(t_p,u_p),w^m), & \forall w^m \in H_0^1(\Omega_m)\\[10pt]
			a^c(u_p,w^c) = (R^c(t_p,\cdot),w^c), & \forall w^c \in H_0^1(\Omega_c)
		\end{array}
		\right. \quad \label{33}
	\end{equation}
	
	\begin{theorem}\label{thm2}
		Assume that the exact solution \(u = (u^m,u^c)\) satisfies:
		\[
		u^m \in L^2(0,T;H^2(\Omega_m)) \cap H^2(0,T;L^2(\Omega_m)),\quad u^c \in L^2(0,T;H^2(\Omega_c)).
		\]
		For any \(n\in [1,N]\), we have:
		\[
		S\| \theta_n^m\|^2 + \sum_{p=1}^n \tau_p \| \theta_p^m\|_{H_0^1(\Omega_m)}^2 \lesssim \tau^2 \| \partial_t^2 u\|_{H^2(0,t_n;L^2(\Omega_m))}^2,
		\]
		where \(\tau = \max_{1\leq p\leq N} \tau_p\).
	\end{theorem}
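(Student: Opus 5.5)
The plan is the standard energy argument for backward Euler, applied to the error equation. Write \(\theta_p = u(t_p) - u_p\) with \(\theta_0 = 0\), since \(u_0 = u(0)\).

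\textbf{Step 1: error equation.} Evaluate the continuous problem (\ref{model3}) at \(t = t_p\) and subtract the semi-discrete problem (\ref{19}), written in the form (\ref{33}). For all \(w \in W\) this gives
\[
S\Bigl(\frac{\theta_p^m - \theta_{p-1}^m}{\tau_p}, w^m\Bigr) + a^m(\theta_p, w^m) + a^c(\theta_p, w^c) = S(\rho_p, w^m),
\]
with the consistency residual
\[
\rho_p := \frac{u^m(t_p) - u^m(t_{p-1})}{\tau_p} - \partial_t u^m(t_p).
\]
Taylor's formula with integral remainder gives
\[
\rho_p = -\frac{1}{\tau_p}\int_{t_{p-1}}^{t_p} (s - t_{p-1})\, \partial_t^2 u^m(s)\, ds ,
\]
and hence, by Cauchy--Schwarz,
\[
\|\rho_p\|^2 \le \frac{\tau_p}{3} \int_{t_{p-1}}^{t_p} \|\partial_t^2 u^m(s)\|^2\, ds .
\]
This step uses the assumption \(u^m \in H^2(0,T;L^2(\Omega_m))\).

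\textbf{Step 2: test and telescope.} Take \(w = \theta_p\), multiply by \(\tau_p\), and use the identity \(2(a-b)a = a^2 - b^2 + (a-b)^2\). Also use the symmetric rearrangement of \(a^m + a^c\), so that the exchange term equals
\[
\frac{\alpha_{ex}}{2\pi}\int_a^b\int_0^{2\pi} \bigl(\theta_p^m(s_0(x,\theta)) - \theta_p^c(x)\bigr)^2 \, d\theta\, dx \ \ge 0,
\]
and can be dropped. This leaves
\[
\frac{S}{2}\bigl(\|\theta_p^m\|^2 - \|\theta_{p-1}^m\|^2\bigr) + \tau_p K\|\nabla\theta_p^m\|^2 + \tau_p D\|\partial_x\theta_p^c\|^2 \le S\tau_p(\rho_p, \theta_p^m).
\]
Bound the right-hand side using the Poincaré inequality and Young's inequality:
\[
S\tau_p\|\rho_p\|\,\|\theta_p^m\| \le \frac{\tau_p K}{2}\|\nabla\theta_p^m\|^2 + C\tau_p\|\rho_p\|^2 .
\]
The gradient term is absorbed on the left. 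Summing over \(p = 1,\dots,n\) and using \(\theta_0 = 0\) gives
\[
S\|\theta_n^m\|^2 + \sum_{p=1}^n \tau_p\|\theta_p^m\|^2_{H_0^1} \lesssim \sum_{p=1}^n \tau_p\|\rho_p\|^2 \lesssim \sum_{p=1}^n \tau_p^2 \int_{t_{p-1}}^{t_p}\|\partial_t^2 u^m\|^2 \le \tau^2 \|\partial_t^2 u^m\|^2_{L^2(0,t_n;L^2)} ,
\]
which is bounded by the norm on the right of the statement of Theorem \ref{thm2}. Using Young's inequality in this way avoids any need for a discrete Gronwall lemma.

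\textbf{Main obstacle.} The delicate point is the coupling. The conduit equation in (\ref{33}) has no time derivative, so \(\theta_p^c\) has no consistency residual. I must check that the combined form \(a^m(\theta_p,\theta_p^m) + a^c(\theta_p,\theta_p^c)\) is indeed the nonnegative symmetric expression above, using the rearrangement identity stated before Theorem \ref{thm1}. Otherwise the cross terms \(\theta^c\,\Pi\theta^m\) would have to be controlled using the boundedness of \(\Pi\) (\(C_\Pi\)) together with Poincaré on \(\Omega_c\).

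I also need to interpret the right-hand side of the stated estimate correctly. As written it involves \(\partial_t^2 u\) in an \(H^2\) norm; the argument above only needs the weaker quantity \(\|\partial_t^2 u^m\|_{L^2(0,t_n;L^2(\Omega_m))}\), which is dominated by it. If the time mesh is nonuniform, no ratio condition on \(\tau_p\) is needed, since \(\tau_p \le \tau\) suffices.
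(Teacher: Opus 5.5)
Your proof is correct. It uses the same energy method as the paper: error equation at $t_p$, test with the error, telescoping identity, Young plus Poincar\'e, and summation with $\theta_0=0$. It departs from the paper's execution in two places, both to your advantage.

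First, the paper expands about $t_{p-1}$ with a Lagrange remainder $\frac{\tau_p^2}{2}\partial_t^2u^m(\xi_p)$ and stops at $\sum_{p=1}^n\tau_p^3\|\partial_t^2u(\xi_p)\|^2$. Point values in time like these can only be turned into a norm through a supremum bound and an embedding such as $H^1(0,T;L^2(\Omega_m))\subset C([0,T];L^2(\Omega_m))$ applied to $\partial_t^2u^m$. This presumably explains the strong time norm in the statement. The paper's step also has two slips. The equality form with an intermediate point $\xi_p$ does not hold in general for $L^2(\Omega_m)$-valued functions. And expanding about $t_{p-1}$ produces $\partial_tu^m(t_{p-1})$, whereas the error equation contains $\partial_tu^m(t_p)$. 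Your integral remainder, centred at $t_p$, gives $\sum_{p=1}^n\tau_p\|\rho_p\|^2\le\frac{\tau^2}{3}\|\partial_t^2u^m\|^2_{L^2(0,t_n;L^2(\Omega_m))}$ directly. So you use only the stated hypothesis $u^m\in H^2(0,T;L^2(\Omega_m))$ and obtain a bound in a weaker norm.

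Second, the paper tests with $w^m=\theta_p^m$ alone and invokes ``coercivity of $a^m$''. However, $a^m(\theta_p,\theta_p^m)$ contains the cross term $-\alpha_{ex}\int_a^b\theta_p^c\,\Pi\theta_p^m\,dx$, which has no sign. You instead test with the full pair $\theta_p=(\theta_p^m,\theta_p^c)$ and use the rearrangement identity, which makes the exchange contribution nonnegative. So the concern in your ``main obstacle'' paragraph is already settled by your Step 2. You also get control of $D\sum_{p=1}^n\tau_p\|\partial_x\theta_p^c\|^2$ at no extra cost.
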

	
	\begin{proof}
		Subtracting (\ref{32}) from (\ref{33}), we obtain:
		\[
		S\left(\partial_t u^m(t_p,\cdot) - \frac{u_p^m - u_{p-1}^m}{\tau_p}, w^m\right) = \left(F(t_p,u(t_p,\cdot)) - F(t_p,u_p), w\right). \quad \label{34}
		\]
		
		Using Taylor expansion around \(t_{p-1}\), we have:
		\[
		u^m(t_p,\cdot) = u^m(t_{p-1},\cdot) + \tau_p \partial_t u^m(t_{p-1},\cdot) + \frac{\tau_p^2}{2} \partial_t^2 u^m(\xi_p,\cdot),
		\]
		for some \(\xi_p \in [t_{p-1},t_p]\).
		
		Thus, we have
		\[
		\partial_t u^m(t_{p-1}) = \frac{u^m(t_p) - u^m(t_{p-1})}{\tau_p} - \frac{\tau_p}{2} \partial_t^2 u^m(\xi_p,\cdot),
		\]
		for some \(\xi_p \in [t_{p-1},t_p]\).
		
		\[
		S\left(\frac{\theta_p^m - \theta_{p-1}^m}{\tau_p} - \frac{\tau_p}{2} \partial_t^2 u^m(\xi_p,\cdot), w^m\right) = -a^m\left(u(t_p,\cdot) - u_p, w^m\right).
		\]
		
		Setting \(w^m = \theta_p^m\) gives the following:
		\[
		\frac{S\tau_p}{2}\left(\|\theta_p^m\|^2 - \|\theta_{p-1}^m\|^2 + \|\theta_p^m - \theta_{p-1}^m\|^2\right) + a^m(\theta_p,\theta_p^m) = \frac{S\tau_p}{2}\|\partial_t^2 u(\xi_p,\cdot)\| \|\theta_p^m\|.
		\]
		
		From the coercivity of \(a^m\), we have: \(\|\theta_p^m\|_{H_0^1(\Omega)}^2 \lesssim a^m(\theta_p,\theta_p^m)\).
		
		Applying Young's inequality, we have:
		\[
		\|\partial_t^2 u(\xi_p,\cdot)\| \|\theta_p^m\| \leq \frac{S\tau_p}{4} \|\partial_t^2 u(\xi_p,\cdot)\|^2 + \frac{1}{S\tau_p} \|\theta_p^m\|^2.
		\]
		
		Thanks to Poincaré inequality, the relation yields:
		\begin{eqnarray}
		\frac{S\tau_p}{2}\left(\|\theta_p^m\|^2 - \|\theta_{p-1}^m\|^2\right) + \frac{1}{2}\|\theta_p^m\|_{H_0^1(\Omega_m)}^2 \lesssim \frac{S^2\tau_p^2}{8} \|\partial_t^2 u(\xi_p,\cdot)\|^2. \quad \label{36}
		\end{eqnarray}
		
		Multiplying (\ref{36}) by \(2\tau_p\) and summing over \(p\in [1,n]\), we obtain:
		\[
		S\|\theta_p^m\|^2 + \sum_{p=1}^n \tau_p \|\theta_p^m\|_{H_0^1(\Omega_m)}^2 \lesssim \sum_{p=1}^n \tau_p^3 \|\partial_t^2 u(\xi_p,\cdot)\|^2. \quad \square
		\]
		
	\end{proof}
	
	\subsection{A-priori analysis of the space discretization}
	
	The finite element methods presented above are conforming for approximating the hydraulic head in the conduit \(\Omega_c\) and non-conforming for approximating the hydraulic head in the matrix \(\Omega_m\). Thus, we will perform the a-priori error analysis separately and then combine them. As \(a_p\) is a bilinear form, the problem (\ref{19}) is equivalent to the following problem:
	\begin{equation}
		\left\{
		\begin{array}{ll}
			A_p(u_p^m,w) = S(u_{p-1}^m,w)_{\Omega_m} + \tau_p(R_p^m,w)_{\Omega_m} + \tau_p \alpha_{ex}(u_p^c,\Pi w)_{\Omega_c} & \forall w \in W_{ph}^m\\[5pt]
			B(u_p^c,w) = (R_p,w) + \alpha_{ex}(\Pi u_p^m,w) & \forall w \in W_{ph}^c
		\end{array}
		\right., \quad \label{37}
	\end{equation}
	where \(A_p\) and \(B\) are two symmetric bilinear forms respectively defined on \(H_0^1(\Omega_m)\) and \(H_0^1(\Omega_c)\) by:
	\begin{equation}
		\begin{array}{l}
			A_p(v,w) = S\displaystyle\int_{\Omega_m} v w + \tau_p K\int_{\Omega_m} \nabla v \cdot \nabla w + \tau_p \alpha_{ex}\int_a^b \Pi(v w)dx,\\[10pt]
			B(v,w) = D\displaystyle\int_a^b \partial_x v \cdot \partial_x w + \alpha_{ex}\int_a^b v w.
		\end{array} \quad \label{38}
	\end{equation}
	
	Similarly, the problem (\ref{20'}) is equivalent to the following problem:
	\begin{equation}
		\left\{
		\begin{array}{ll}
			A_{ph}(u_{ph}^m,w_{ph}^m) = S(u_{p-1}^m,w_{ph}^m) + \tau_p(R_{ph}^m,w_{ph}^m) + \tau_p \alpha_{ex}(u_{ph}^c,\Pi w_{ph}^m)_{\Omega_c} & \forall w^m \in W_{ph}^m\\[5pt]
			B_h(u_{ph}^c,w_{ph}^c) = (R_p^c,w_{ph}^c) + \alpha_{ex}(\Pi u_{ph}^m,w_{ph}^c) & \forall w_{ph}^c \in W_{ph}^c,
		\end{array}
		\right. \quad 
	\end{equation}
	where \(A_{ph}\) and \(B_h\) are two symmetric bilinear forms respectively defined on \(W_{ph}^m\) and \(W_{ph}^c\) by:
	\[
	A_{ph}(v,w) = \sum_{T\in \mathcal{T}_{ph}}\int_T(S v w + \tau_p K\nabla v \cdot \nabla w) + \tau_p \alpha_{ex}\sum_{E\in \mathcal{T}_{ph}}\int_E \Pi(v w)dx,
	\]
	\[
	B_h(v,w) = \sum_{E\in \mathcal{T}_{ph}}\int_E(D\partial_x v \cdot \partial_x w + \alpha_{ex} v w).
	\]
	
	\begin{proposition}
		Let \(C_{\Omega_m}\) and \(C_{ab}\) be the Poincaré constants associated respectively to the domain \(\Omega_m\) and \((a,b)\). Let \(p\in [1,N]\).
		\begin{enumerate}
	\item 	For any \(v^m \in H_0^1(\Omega_m)\) and \(w^m \in H_0^1(\Omega_m)\), we have:
		\begin{equation}
			|A_{ph}(v^m,w^m)| \leq (SC_{\Omega_m}^2 + \tau_p K + \tau_p \alpha_{ex} C_{\Pi}^2) \| v^m\|_{H_0^1(\Omega_m)} \| w^m\|_{H_0^1(\Omega_m)} \quad \label{39}
		\end{equation}
		
		\item If \(\tau_p K - SC_{\Omega_m}^2 - \tau_p \alpha_{ex} C_{\Pi}^2 > 0\), then the bilinear form \(A_p\) is coercive, i.e for any \(v^m \in H_0^1(\Omega_m)\), we have:
		\begin{equation}
			A_p(v^m,v^m) \geq \left(\tau_p K - SC_{\Omega_m}^2 - \tau_p \alpha_{ex} C_{\Pi}^2\right) \| v^m\|_{H_0^1(\Omega_m)}^2. \label{40}
		\end{equation}
		
		\item For any \(v^c \in H_0^1(a,b)\) and \(w^c \in H_0^1(a,b)\), we have:
		\begin{equation}
			|B(v^c,w^c)| \leq (D + \alpha_{ex} C_{ab}^2) \| v^c\|_{H_0^1(a,b)} \| w^c\|_{H_0^1(a,b)} \quad \label{41}
		\end{equation}
		
		\item If \(D - \alpha_{ex} C_{ab}^2 > 0\), then the bilinear form \(B\) is coercive, i.e for any \(v^c \in H_0^1(a,b)\), we have:
		\begin{equation}
			B(v^c,v^c) \geq \left(D - \alpha_{ex} C_{ab}^2\right) \| v^c\|_{H_0^1(a,b)}. \quad \label{42}
		\end{equation}
	\item 	The same results hold for the bilinear forms \(A_{ph}\) and \(B_h\) and involve the same constants.
	\end{enumerate}
	\end{proposition}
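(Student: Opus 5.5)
The plan is to bound each of the three terms of \(A_p\) (and the two terms of \(B\)) separately using Cauchy--Schwarz, then convert every \(L^2\) quantity into an \(H_0^1\) seminorm. For that conversion I will use the Poincaré inequalities \(\|v\|_{L^2(\Omega_m)}\le C_{\Omega_m}\|\nabla v\|_{\Omega_m}\) and \(\|v\|_{L^2(a,b)}\le C_{ab}\|\partial_x v\|_{(a,b)}\), together with the trace/averaging bound \(\|\Pi v\|_{L^2(a,b)}\le C_\Pi\|v\|_{H_0^1(\Omega_m)}\) established in Appendix A. Throughout, \(\|\cdot\|_{H_0^1}\) denotes the gradient seminorm, which is a norm on \(H_0^1\).

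\textbf{Continuity (items 1 and 3).} For the mass term I write \(S|(v,w)|\le S\|v\|\|w\|\le SC_{\Omega_m}^2\|v\|_{H_0^1}\|w\|_{H_0^1}\). The stiffness term gives \(\tau_pK\|\nabla v\|\|\nabla w\|\) directly. The coupling term \(\tau_p\alpha_{ex}\int_a^b\Pi(vw)\) requires more care. I bound \(|\Pi(vw)(x)|\le (\Pi v^2)^{1/2}(\Pi w^2)^{1/2}\) pointwise by Cauchy--Schwarz in \(\theta\), then apply Cauchy--Schwarz in \(x\). What remains is \(\int_a^b\Pi(v^2)=\frac1{2\pi}\int_a^b\int_0^{2\pi}v(s_0(x,\theta))^2\,d\theta\,dx\), which is \(\frac{1}{2\pi r}\|v\|_{L^2(\Gamma)}^2\). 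I need to bound this by \(C_\Pi^2\|v\|_{H_0^1}^2\). The proof in Appendix A is exactly a trace estimate on \(\Gamma\), so I will interpret (or slightly redefine) \(C_\Pi\) so that it controls \(\|(\Pi v^2)^{1/2}\|_{L^2(a,b)}\); this is the step where the stated constant has to be taken with that meaning. Summing the three bounds gives (\ref{39}). For \(B\), I bound \(D|(\partial_xv,\partial_xw)|\) directly and \(\alpha_{ex}|(v,w)|\le\alpha_{ex}C_{ab}^2\|v\|_{H_0^1}\|w\|_{H_0^1}\), which gives (\ref{41}).

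\textbf{Coercivity (items 2 and 4).} Setting \(w=v\), every term of \(A_p(v,v)\) is nonnegative: \(\Pi(v^2)\ge0\). Hence \(A_p(v,v)\ge\tau_pK\|v\|_{H_0^1}^2\), and since \(S\|v\|^2\ge0\) and \(\tau_p\alpha_{ex}\int\Pi(v^2)\ge0\), subtracting the nonnegative quantity \((SC_{\Omega_m}^2+\tau_p\alpha_{ex}C_\Pi^2)\|v\|_{H_0^1}^2\) only weakens the bound. This gives (\ref{40}) under the stated positivity hypothesis; the hypothesis only ensures the constant is positive, and a sharper constant \(\tau_pK\) is in fact available. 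Similarly, \(B(v,v)=D\|\partial_xv\|^2+\alpha_{ex}\|v\|^2\ge D\|v\|_{H_0^1}^2\ge(D-\alpha_{ex}C_{ab}^2)\|v\|_{H_0^1}^2\), which gives (\ref{42}). The norm there should be squared; I will note this typo.

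\textbf{Discrete forms (item 5).} For \(A_{ph}\) and \(B_h\) I repeat the same elementwise computations with broken gradients, using the broken norm \(\|\cdot\|_{W_{ph}^m}\). Cauchy--Schwarz on each \(T\), followed by the discrete Cauchy--Schwarz inequality over the sum, reproduces the stiffness bound. The conduit space \(W_{ph}^c\) is conforming, so \(B_h=B\) on it. The main obstacle is the mass and coupling terms for the nonconforming space \(W_{ph}^m\not\subset H_0^1\). There the continuous Poincaré and trace inequalities must be replaced by their discrete Crouzeix--Raviart analogues (the discrete Poincaré inequality already invoked in the well-posedness theorem, and a broken trace inequality on \(\Gamma\)). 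These analogues hold with mesh-independent constants, but not literally with \(C_{\Omega_m}\) and \(C_\Pi\). I would therefore state item 5 with constants equivalent up to mesh-independent factors. Coercivity, by contrast, transfers verbatim, since it used only the nonnegativity of the mass and coupling terms.
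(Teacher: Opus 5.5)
The paper states this proposition without proof; it goes straight to the Remark defining $\alpha_p,c_p,\beta_p,d_p$, so there is no argument of the authors to compare with. On its own terms, your treatment of items 1--4 is correct and is the natural proof. Item 1 is stated for $A_{ph}$, but on $H_0^1(\Omega_m)$ it coincides with $A_p$, so your term-by-term Cauchy--Schwarz and Poincar\'e computation covers it.

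For the coupling term you do not need to redefine $C_\Pi$. The second estimate of the Appendix proposition is $\bigl|\int_a^b\Pi(vw)\,dx\bigr|\le C_\Pi^2\|v\|_{H_0^1(\Omega_m)}\|w\|_{H_0^1(\Omega_m)}$. Its case $v=w$ is exactly the bound $\int_a^b\Pi(v^2)\,dx\le C_\Pi^2\|v\|_{H_0^1(\Omega_m)}^2$ that your Cauchy--Schwarz step requires. What you should not lean on is the Appendix's argument for that estimate, which turns $\frac{1}{2\pi r}\int_\Gamma u^2\,dS$ into a volume integral over $\Omega_m$. The valid justification is the one you indicate: a trace inequality from the solid cylinder bounded by $\Gamma$, followed by Poincar\'e.

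Your two remarks on coercivity are right. It holds with the larger constants $\tau_pK$ and $D$, and the hypotheses only serve to make the stated constants positive. The bound in item 4 must carry $\|v^c\|_{H_0^1(a,b)}^2$; as written it fails for $\varepsilon v^c$ with $\varepsilon\to 0$.

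On item 5 your caution is warranted. Since $W_{ph}^m\not\subset H_0^1(\Omega_m)$, the best discrete Poincar\'e and trace constants on $W_{ph}^m$ need not be bounded by $C_{\Omega_m}$ and $C_\Pi$; Crouzeix--Raviart eigenvalues often approximate the first Dirichlet eigenvalue from below. Whenever the discrete Poincar\'e constant exceeds $C_{\Omega_m}$, testing with the first discrete eigenfunction shows that the continuity bound with $SC_{\Omega_m}^2$ fails for small $\tau_p$.

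The gap is in your replacement statement: you assert mesh-independent discrete analogues without proof, and on the anisotropic meshes of this paper that is not routine. The discrete Poincar\'e inequality is usually derived through Raviart--Thomas interpolation or an enriching operator, which on anisotropic tetrahedra typically requires a maximum-angle-type condition. The broken trace inequality on $\Gamma$ is harder. For a polynomial on an element cut by $\Gamma$ one only has $\|q\|_{L^2(\Gamma\cap T)}^2\lesssim h_{\min,T}^{-1}\|q\|_T^2$. You would therefore need something like $\sum_T h_{\min,T}^{-1}\|v-E_hv\|_T^2\lesssim\|\nabla_h v\|_{L^2(\Omega_m)}^2$ for a conforming companion $E_hv$, which is not standard and may bring in alignment quantities such as $m_1$.

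Finally, state explicitly that your discrete norm is $\|\nabla_h v\|_{L^2(\Omega_m)}=\bigl(\sum_T\|\nabla v\|_T^2\bigr)^{1/2}$. The paper literally defines $\|w^m\|_{W_{ph}^m}=\sum_T\|\nabla w^m\|_T$, and with that $\ell^1$-type norm the coercivity constant can deteriorate by a factor as small as the reciprocal of the number of elements.
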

	\begin{remark}
	For any \(p\in [1,N]\), let's define the following constants:
	\[
	\alpha_p = \tau_p K - SC_{\Omega_m}^2 - \tau_p \alpha_{ex} C_{\Pi}^2,\quad c_p = SC_{\Omega_m}^2 + \tau_p K + \tau_p \alpha_{ex} C_{\Pi}^2,
	\]
	\[
	\beta_p = D - \alpha_{ex} C_{ab}^2,\quad d_p = D + \alpha_{ex} C_{ab}^2.
	\]
	
	For the remainder of this analysis, for any \(p\in [1,N]\), we suppose that:
	\begin{equation}
		\alpha_p > 0 \quad \text{and} \quad \beta_p > 0. \quad \label{43}
	\end{equation}
	\end{remark}
	\begin{theorem}\mbox{ }\label{th}
		\begin{enumerate}
			\item For any \(n\in [0,N]\), we have:
			\begin{equation}
				\| e_p^c\|_{H_0^1(\Omega_m)} \lesssim h_c \| u_p^c\|_{H_0^1(\Omega_c)}. \quad \label{44}
			\end{equation}
			\item For any \(n\in [0,N]\), we have:
			\begin{equation}
				\| e_n^m\|_{H_0^1(\Omega_m)} \lesssim \left(\prod_{j=1}^n \rho_j\right) \| e_0^m\|_{H_0^1(\Omega_m)} + \sum_{k=1}^n \left(\prod_{j=k+1}^n \rho_j\right) \Psi_k, \quad \label{45}
			\end{equation}
		\end{enumerate}
		where we used the following notation:
		\[
		\rho_j = \frac{SC_{\Omega_m}^2}{\alpha_j},\quad \Psi_k = \frac{\tau_k}{\alpha_k}\left(\alpha_{ex} C_{ab} C_{\Pi} h_c \| u_k^c\|_{H_0^1(a,b)} + K h_m m_1(u_k^m,\mathcal{T}_{kh}) \| u_k^m\|_{H_0^1(\Omega_m)}\right).
		\]
	\end{theorem}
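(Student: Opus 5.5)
Throughout, $e_p^c := u_p^c - u_{ph}^c$ and $e_p^m := u_p^m - u_{ph}^m$ denote the components of the space discretization error $\epsilon_p$ at step $p$. I will treat the two parts of Theorem~\ref{th} separately, following the splitting into (\ref{37}) and its discrete analogue. Both parts use a C\'ea/Strang-type argument: coercivity of $B_h$ or $A_{ph}$ from the preceding Proposition under (\ref{43}), an interpolant from Section~\ref{sec4}, and the interpolation estimates (\ref{23}) and (\ref{29})--(\ref{30}).

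\textbf{Conduit part (\ref{44}).} Subtract the discrete conduit equation from the continuous one (\ref{37}), tested with $w^c_{ph}\in W_{ph}^c\subset H_0^1(\Omega_c)$. Since the conduit discretization is conforming, this gives a Galerkin-orthogonality relation perturbed by the coupling term:
\[
B(e_p^c,w^c_{ph})=\alpha_{ex}(\Pi e_p^m,w^c_{ph}).
\]
Writing $e_p^c=(u_p^c-I_Lu_p^c)+(I_Lu_p^c-u_{ph}^c)$, I test with $w^c_{ph}=I_Lu_p^c-u_{ph}^c$. Coercivity (\ref{42}) with constant $\beta_p$ and continuity (\ref{41}) with constant $d_p$ then bound $\|I_Lu_p^c-u_{ph}^c\|_{H_0^1}$ by $\|u_p^c-I_Lu_p^c\|_{H_0^1}$ plus a coupling contribution. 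The interpolation term is $\lesssim h_c\|u_p^c\|$ by the one-dimensional version of (\ref{23}), using the stated regularity $u^c\in L^2(0,T;H^2)$ to obtain the $h_c$ factor. As for the coupling contribution $\alpha_{ex}(\Pi e_p^m,\cdot)$, either it is absorbed into the $\lesssim$ of (\ref{44}) as written, or it is carried over into $\Psi_k$. The form of $\Psi_k$, which contains $\alpha_{ex}C_{ab}C_\Pi h_c$, suggests the latter, so I will state (\ref{44}) as the pure conduit interpolation bound.

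\textbf{Matrix part (\ref{45}).} Here the scheme is nonconforming, so I use a second-Strang argument at each time level $p$. Subtracting the two matrix equations gives, for every $w\in W_{ph}^m$,
\[
A_{ph}(e_p^m,w)=S(e_{p-1}^m,w)+\tau_p\alpha_{ex}(e_p^c,\Pi w)_{\Omega_c}+\tau_p\mathcal{C}_p(u_p^m,w).
\]
Here $\mathcal{C}_p$ is the consistency error coming from face jumps, namely $\tau_p K\sum_F\int_F \nabla u_p^m\cdot n\,[w]$. I then split $e_p^m=(u_p^m-I^0_{\rm Cl}u_p^m)+(I^0_{\rm Cl}u_p^m-u_{ph}^m)$ and test with the discrete part. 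Coercivity (\ref{40}) yields the factor $1/\alpha_p$, and the right-hand side is estimated term by term:
\begin{itemize}
\item the previous-step term is bounded by $SC_{\Omega_m}^2\|e_{p-1}^m\|_{H_0^1}$ via Poincar\'e, which produces $\rho_p=SC^2_{\Omega_m}/\alpha_p$;
\item the coupling term is bounded through $C_\Pi$, $C_{ab}$ and (\ref{44}), which produces $\tau_p\alpha_{ex}C_{ab}C_\Pi h_c\|u_p^c\|$;
\item the interpolation error and the consistency error are bounded via (\ref{29}) and (\ref{30}), which produce $\tau_pKh_m m_1(u_p^m,\mathcal{T}_{ph})\|u_p^m\|$.
\end{itemize}
These combine into the one-step recursion
\[
\|e_p^m\|\lesssim \rho_p\|e_{p-1}^m\|+\Psi_p .
\]

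\textbf{Induction.} A straightforward induction on $n$ unrolls this recursion into exactly (\ref{45}), with the products $\prod_{j=k+1}^n\rho_j$.

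\textbf{Main obstacle.} I expect the hard step to be the consistency term $\mathcal{C}_p$ on anisotropic meshes. To control it I would first use the zero-mean jump property $\int_F[w]=0$, subtracting face means of $\nabla u_p^m\cdot n$. I would then apply anisotropic trace inequalities and (\ref{30}), so that the alignment measure $m_1$ appears rather than the aspect ratio. A secondary difficulty is that I am only keeping track of constants loosely: the $\lesssim$ must hide constants independent of $p$ so that the products $\rho_j$ are not inflated.
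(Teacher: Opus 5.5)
Your route is the paper's: C\'ea with $I_L$ and (\ref{23}) for the conduit, a Strang-type bound for the matrix, and unrolling the one-step recursion. For the matrix step, coercivity gives $1/\alpha_p$, Poincar\'e gives $\rho_p$, the exchange term is handled via (\ref{44}), and a Cl\'ement best-approximation term appears. Two steps, however, do not go through, and the paper's proof has the same holes.

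\textbf{The conduit coupling.} You correctly derive $B(e_p^c,w^c_{ph})=\alpha_{ex}(\Pi e_p^m,w^c_{ph})$ and then drop the right-hand side. Testing with $I_Lu_p^c-u_{ph}^c$ actually yields
\[
\beta_p\|e_p^c\|_{H_0^1(a,b)}\le(\beta_p+d_p)\|u_p^c-I_Lu_p^c\|_{H_0^1(a,b)}+\alpha_{ex}C_{ab}C_\Pi\|e_p^m\|_{H_0^1(\Omega_m)} .
\]
The last term is not controlled by $h_c\|u_p^c\|$, so it cannot be ``absorbed into $\lesssim$''. It is not ``carried into $\Psi_k$'' either. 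The summand $\alpha_{ex}C_{ab}C_\Pi h_c\|u_k^c\|$ of $\Psi_k$ is what the \emph{matrix} exchange term $\tau_p\alpha_{ex}(e_p^c,\Pi w)$ becomes once (\ref{44}) is inserted, which you do separately.

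Carrying the term honestly puts $\tau_p\alpha_{ex}^2C_{ab}^2C_\Pi^2\beta_p^{-1}\|e_p^m\|$ on the right of the matrix step. That is absorbable only under an extra smallness assumption such as $\tau_p\alpha_{ex}^2C_{ab}^2C_\Pi^2<\alpha_p\beta_p$, which is not contained in (\ref{43}). Alternatively, you can estimate $(e_p^m,e_p^c)$ jointly with the full forms $a_p$, $a_{ph}$, whose exchange part $\frac{\alpha_{ex}}{2\pi}\int_a^b\int_0^{2\pi}(v^m-v^c)^2\,d\theta\,dx$ is nonnegative. But then no decoupled conduit bound like (\ref{44}) comes out. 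The paper simply invokes C\'ea's lemma as if the conduit problem were decoupled, so here you reproduce its gap rather than repair it.

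\textbf{The powers of $h$.} The estimates (\ref{23}) and (\ref{29})--(\ref{31}) bound weighted $L^2$ norms of $v-Iv$ (on elements or faces) by first derivatives of $v$. They give nothing of the form $|v-Iv|_{H^1}\lesssim h\|v\|_{H^1}$, and no such bound can hold: take $v$ vanishing at all nodes of $\mathcal{P}_{ph}$, so that $I_Lv=0$. Hence neither $h_c\|u_p^c\|_{H_0^1(\Omega_c)}$ nor $Kh_m m_1(u_p^m,\mathcal{T}_{ph})\|u_p^m\|_{H_0^1(\Omega_m)}$ can be produced from them.

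In the conduit you silently switch to $h_c|u_p^c|_{H^2}$, which is a different right-hand side. Moreover, the $H^2$ hypotheses of Theorem~\ref{thm2} concern $u$, not the iterates $u_p$.

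In the matrix, several things fail:
\begin{itemize}
\item Both the best-approximation term and your consistency term $\mathcal{C}_p$ need second derivatives of $u_p^m$.
\item (\ref{30}) concerns the Cl\'ement interpolant of a conforming function and says nothing about $\|[w]\|_F$ for $w\in W_{ph}^m$.
\item The $\delta_\Gamma$ exchange forces $K\partial_n u_p^m$ to jump across the cylinder $\Gamma$ by a multiple of $u_p^m-u_p^c$. So $u_p^m\notin H^2(\Omega_m)$ in general, and the face-mean argument you sketch breaks down on elements cut by $\Gamma$.
\end{itemize}

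The paper has the same defects: it asserts $\inf_v\|u_p^m-v\|\lesssim h_m m_1\|u_p^m\|_{H_0^1(\Omega_m)}$ without justification and drops the nonconforming consistency term altogether. You are more candid in isolating $\mathcal{C}_p$ and the coupling. But leaving both unresolved means the proposal does not prove (\ref{44})--(\ref{45}).

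Your worry about hidden constants in the unrolling, by contrast, is easily met. Split $e_p^m=(u_p^m-I^0_{\mathrm{Cl}}u_p^m)+\xi_p$ and test with $\xi_p$; this gives coefficient exactly one in front of $\rho_p\|e_{p-1}^m\|$, with $C_{\Omega_m}$ read as a broken Poincar\'e constant. Only $\Psi_p$ then carries a generic constant.
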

	
	\begin{proof}\mbox{ }
		\begin{enumerate}
			\item Recall that \(a^c\) is continuous and coercive. From Céa's lemma \cite{ern2005aide}, we have:
			\[
			\| u_p^c - u_{ph}^c\|_{H_0^1(\Omega_c)} \lesssim \inf_{v_h \in W_{ph}^c} \| u_p^c - v_h\|_{H_0^1(\Omega_c)} \leq \| u_p^c - I_L u_p^c\|_{H_0^1(\Omega_c)}.
			\]
			
			By applying the interpolation estimate (\ref{23}), we have (\ref{44}).
			
			\item Now, let's consider \(v_{ph}^m \in W_{ph}^m\) and \(w_{ph}^m \in W_{ph}^m\). By linearity, we have:
			\[
			A_{ph}(u_{ph}^m - v_{ph}^m,w_{ph}^m) = A_{ph}(u_{ph}^m - u_{ph}^m,w_{ph}^m) + A_{ph}(u_{ph}^m - v_{ph}^m,w_{ph}^m)
			\]
			
			As \(A_{ph}\) is continuous, we have:
			\begin{eqnarray*}
			A_{ph}(u_{ph}^m - v_{ph}^m,w_{ph}^m) &\leq& \sup_{w_{ph}^m \in W_{ph}^m}\left\{ \frac{|A_{ph}(u_{ph}^m,w_{ph}^m) - A_p(u_{ph}^m,w_{ph}^m)|}{\|w_{ph}^m\|_{W_{ph}^m}}\right\} \| w_{ph}^m\|_{W_{ph}^m} \\
			&+& c_p \| w_{ph}^m - v_{ph}^m\|_{H_0^1(\Omega_m)} \| w_{ph}^m\|_{W_{ph}^m}.
			\end{eqnarray*}
			
			Using the coercivity of \(A_{ph}\), we have:
			\begin{eqnarray*}
			\alpha_p \| u_{ph}^m - v_{ph}^m\|_{H_0^1(\Omega_m)} &\leq& \sup_{w_{ph}^m \in W_{ph}^m} \frac{|A_{ph}(u_{ph}^m,w_{ph}^m) - A_p(u_{ph}^m,w_{ph}^m)|}{\|w_{ph}^m\|_{W_{ph}^m}}\\
			& &+ c_p \inf_{v_{ph}^m \in W_{ph}^m} \| u_{ph}^m - v_{ph}^m\|_{H_0^1(\Omega_m)}.
			\end{eqnarray*}
			
			Using the triangular inequality, we have:
			\begin{eqnarray*}
			\alpha_p \| u_{ph}^m - u_{ph}^m\|_{H_0^1(\Omega_m)} &\leq& \sup_{w_{ph}^m \in W_{ph}^m} \frac{|A_{ph}(u_{ph}^m,w_{ph}^m) - A_p(u_{ph}^m,w_{ph}^m)|}{\|w_{ph}^m\|_{W_{ph}^m}} \\
			&+ &(\alpha_p + c_p) \inf_{v_{ph}^m \in W_{ph}^m} \| u_{ph}^m - v_{ph}^m\|_{H_0^1(\Omega_m)}.
			\end{eqnarray*}
			
			\[
			\inf_{v_{ph} \in W_{ph}^m} \| u_{ph}^m - v_{ph}\|_{H_0^1(\Omega_m)} \lesssim h_m m_1(u_{ph}^m,\mathcal{T}_{ph}) \| u_{ph}^m\|_{H_0^1(\Omega_m)}.
			\]
			
			Subtracting the first equations of the problems (\ref{37}) and (\ref{38}), we obtain:
			\[
			A_{ph}(u_{ph}^m,w_{ph}) - A_p(u_p,w_{ph}) = S(u_{p-1,h}^m - u_{p-1}^m,w_{ph}^m) + \alpha_{ex}(u_{ph}^c - u_p^c,\Pi w_{ph}^m).
			\]
			
			Subsequently, applying Cauchy-Schwarz and Poincaré inequalities yield to the following estimations:
			\[
			|(u_p^c - u_{ph}^c,\Pi w_{ph}^m)| \leq C_{ab} C_{\Pi} \| u_p^c - u_{ph}^c\|_{H_0^1(a,b)} \times \| w_{ph}^m\|_{W_{ph}^m}
			\]
			\[
			|(u_{p-1}^m - u_{p-1,h}^m,w_{ph}^m)| \leq C_{\Omega_m}^2 \| u_p^m - u_{p-1,h}^m\|_{H_0^1(\Omega_m)} \| w_{ph}^m\|_{W_{ph}^m}
			\]
			
			Thus the inequality is equivalent to:
			\begin{eqnarray}\nonumber
				\alpha_p \| e_p^m\|_{H_0^1(\Omega_m)} &\lesssim& S C_{\Omega_m}^2 \| e_{p-1}^m\|_{H_0^1(\Omega_m)} + \tau_p \alpha_{ex} C_{ab} C_{\Pi} \| e_p^c\|_{H_0^1(a,b)}\\ &+& \tau_p K h_m m_1(u_p^m,\mathcal{T}_{ph}) \| u_p^m\|_{H_0^1(\Omega_m)} \quad \label{47}
			\end{eqnarray}
			
			With the estimation (\ref{44}), we have:
			\begin{eqnarray}\nonumber
				\| e_p^m\|_{H_0^1(\Omega_m)} &\lesssim& \frac{S C_{\Omega_m}^2}{\alpha_p} \| e_{p-1}^m\|_{H_0^1(\Omega_m)}\\
				& +& \frac{\tau_p}{\alpha_p} \left( \alpha_{ex} C_{ab} C_{\Pi} h_c \| u_p^c\|_{H_0^1(a,b)} + K h_m m_1(u_p^m,\mathcal{T}_{ph}) \| u_p^m\|_{H_0^1(\Omega_m)} \right) \quad \label{48}
			\end{eqnarray}
			
			Applying recursively this estimate, for any \(n\in [1,N]\), we obtain (\ref{45}).
		\end{enumerate}
	\end{proof}
	
	\begin{remark}[Stability Analysis and Physical Interpretation]
		The error estimate in Theorem \ref{th} reveals several important aspects of the numerical method's behavior:
		
		\begin{itemize}
			\item [i)]\textbf{Stability Conditions:} The decay or boundedness of the error over time depends critically on the stability parameter \(\rho_j = \dfrac{SC_{\Omega_m}^2}{\alpha_j}\). For the method to be stable, we need \(\rho_j \leq 1\) for all \(j\), which translates to:
			\[
			\alpha_j = \tau_j K - SC_{\Omega_m}^2 - \tau_j \alpha_{ex} C_{\Pi}^2 \geq SC_{\Omega_m}^2
			\]
			This gives the stability condition:
			\begin{equation}
				\tau_j \geq \frac{2SC_{\Omega_m}^2}{K - \alpha_{ex} C_{\Pi}^2}. \quad \label{49}
			\end{equation}
			
			\item [ii)]\textbf{Physical Parameter Influence:} The stability depends on the interplay between:
			\begin{itemize}
				\item Storage coefficient \(S\) (larger \(S\) requires larger time steps)
				\item Hydraulic conductivity \(K\) (larger \(K\) improves stability)
				\item Exchange coefficient \(\alpha_{ex}\) (larger exchange requires more careful time stepping)
				\item Geometric properties through Poincaré constants
			\end{itemize}
			
			\item[iii)] \textbf{Asymmetric Behavior:} The error behavior differs significantly between the two domains:
			\begin{itemize}
				\item Conduit error \((\Omega_c)\): Depends only on mesh size \(h_c\) and exhibits standard finite element convergence.
				\item Matrix error \((\Omega_m)\): Involves complex temporal propagation through the product \(\prod_j \rho_j\).
			\end{itemize}
			
			\item[iv)] \textbf{Temporal Error Propagation:} When \(\rho_j > 1\), the error can grow exponentially as \(\prod_{j=1}^n \rho_j \sim \rho^n\) where \(\rho > 1\). This exponential growth can render the numerical method impractical for long-time simulations.
			
			\item[v)] \textbf{Coupling Effects:} The coupling between matrix and conduit through the exchange term \(\alpha_{ex}\) affects both the stability condition and the error propagation, making the analysis more complex than for decoupled problems.
			
			\item [vi)]\textbf{Adaptive Time Stepping:} The stability condition suggests that adaptive time stepping strategies should monitor the ratio \(\dfrac{SC_{\Omega_m}^2}{\alpha_j}\) and adjust \(\tau_j\) accordingly to maintain stability.
		\end{itemize}
	\end{remark}
	
	\begin{corollary}\label{co1}
		For any \(n\in [1,N]\), under the stability condition (\ref{49}), the space error satisfies:
		\begin{equation}
			\| e_n\|_{W} \lesssim \| e_0^m\|_{H_0^1(\Omega_m)} + \sum_{k=1}^n \tau_k \left( h_c + h_m m_1(u_k^m,\mathcal{T}_{kh}) \right) \| u_k\|_{W}, \quad \label{50}
		\end{equation}
		where the underlying constant depends on the physical parameters.
		
		Moreover, if the meshes \(\mathcal{T}_{kh}\) are uniform, we have:
		\begin{equation}
			\| e_n\|_{W} \lesssim \| e_0^m\|_{H_0^1(\Omega_m)} + (h_c + h_m) \sum_{k=1}^n \tau_k \| u_k\|_{W}. \quad \label{51}
		\end{equation}
	\end{corollary}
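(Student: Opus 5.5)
The corollary follows directly from Theorem \ref{th}, so the plan is to take the two estimates (\ref{44}) and (\ref{45}), simplify them under the stability condition (\ref{49}), and add them. Recall that $\|e_n\|_{W} = \|e_n^m\|_{H_0^1(\Omega_m)} + \|e_n^c\|_{H_0^1(\Omega_c)}$, so the matrix and conduit contributions can be bounded separately.

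\textbf{Step 1: the matrix error.} Condition (\ref{49}) says exactly that $\alpha_j \geq S C_{\Omega_m}^2$, which is the same as $\rho_j \leq 1$ for every $j$. Every product $\prod_{j=1}^n \rho_j$ and $\prod_{j=k+1}^n\rho_j$ in (\ref{45}) is therefore at most $1$. Estimate (\ref{45}) then reduces to
\[
\|e_n^m\|_{H_0^1(\Omega_m)} \lesssim \|e_0^m\|_{H_0^1(\Omega_m)} + \sum_{k=1}^n \Psi_k .
\]

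\textbf{Step 2: bounding $\Psi_k$.} Each $\Psi_k$ carries a factor $\tau_k/\alpha_k$. I would bound $1/\alpha_k$ uniformly: by (\ref{49}), $1/\alpha_k \leq 1/(S C_{\Omega_m}^2)$, a constant depending only on the physical parameters. The remaining coefficients $\alpha_{ex}C_{ab}C_\Pi$ and $K$ are also physical constants. Using $\|u_k^c\|_{H_0^1(a,b)}, \|u_k^m\|_{H_0^1(\Omega_m)} \leq \|u_k\|_W$, this gives
\[
\Psi_k \lesssim \tau_k\left(h_c + h_m\, m_1(u_k^m,\mathcal{T}_{kh})\right)\|u_k\|_W .
\]

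\textbf{Step 3: the conduit error.} Estimate (\ref{44}) gives $\|e_n^c\| \lesssim h_c \|u_n^c\|$, with no factor $\tau$. This is the only delicate point, because (\ref{50}) carries the weight $\tau_k$. There are two ways to handle it. Either one writes $h_c\|u_n\|_W = \tau_n^{-1}\cdot \tau_n h_c \|u_n\|_W$ and absorbs $\tau_n^{-1}$ into the constant, which is legitimate since (\ref{49}) bounds $\tau_n$ from below by a physical constant. Or one reads the term as an additional, harmless summand. I would use the first route, so that everything is expressed as the single sum in (\ref{50}). Adding the matrix and conduit bounds then yields (\ref{50}).

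\textbf{Step 4: uniform meshes.} On uniform, hence isotropic, meshes the alignment measure satisfies $m_1(u_k^m,\mathcal{T}_{kh}) = 1$, as noted after (\ref{25}). Substituting this into (\ref{50}) and factoring $h_c + h_m$ out of the sum gives (\ref{51}).
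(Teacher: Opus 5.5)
Your argument is correct and is the deduction the paper intends. The paper gives no separate proof and treats the corollary as an immediate consequence of Theorem \ref{th}, once (\ref{49}) together with (\ref{43}) yields $\rho_j\le 1$ and $1/\alpha_k\le 1/(SC_{\Omega_m}^2)$. You also correctly handled the point the paper leaves implicit: the conduit bound (\ref{44}) carries no factor $\tau_n$, and it must be absorbed using the lower bound $\tau_n\ge 2SC_{\Omega_m}^2/(K-\alpha_{ex}C_\Pi^2)$ from (\ref{49}). For (\ref{51}), note that on uniform, shape-regular meshes (\ref{25}) only gives $m_1\lesssim 1$ rather than exactly $1$, which is all you need.
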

		\section*{Corollary (Full Discretization A-priori Estimation)}
	Under the assumptions of Theorem \ref{thm2} and Corollary \ref{co1}, the coercivity assumptions (\ref{43}), for any \(n\in [1,N]\), the error satisfies:
	\begin{equation*}
	\| u(t_n) - u_{nh}\|_{W} \lesssim \tau \| \partial_t^2 u\|_{H^2(0,t_n;L^2(\Omega_m))} + \sum_{k=1}^n \tau_k \left[ h_c + h_m m_1(u_k^m,\mathcal{T}_{kh}) \right] \| u_k\|_W + \| e_0^m\|_{H_0^1(\Omega_m)}. \quad \label{52}
	\end{equation*}
	
	Moreover, if the meshes \(\mathcal{T}_{kh}\) are uniform, we have:
	\begin{equation*}
	\| u(t_n) - u_{nh}\|_{W} \lesssim \tau \| \partial_t^2 u\|_{H^2(0,t_n;L^2(\Omega_m))} + (h_c + h_m) \sum_{k=1}^n \tau_k \| u_k\|_W + \| e_0^m\|_{H_0^1(\Omega_m)}. \quad \label{53}
	\end{equation*}
	\section{A-posteriori Error Analysis}\label{sec6}
	
	For a given sequence \((v_p^m)_{0\leq p\leq N}\) in \(H_0^1(\Omega_m) \oplus W_h^m\), denote by \(v_t^m\) its Lagrange interpolant which is affine on each interval \([t_{p-1},t_p]\), \(1\leq p\leq N\), and equal to \(v_p^m\) at \(t_p\), i.e defined by:
	\[
	\forall t\in [t_{p-1},t_p], \quad v_t^m(t,\cdot) = \frac{t_p - t}{\tau_p} v_{p-1}^m + \frac{t - t_{p-1}}{\tau_p} v_p^m.
	\]
	
	For \(s\in \{m,c\}\) and a given sequence \((v_p^s)_{1\leq p\leq N}\) in \(H_0^1(\Omega_s)\), denote by \(\pi_t u^s\) the step function which is constant and equal to \(v^s(t_p)\) on each interval \((t_{p-1},t_p)\).
	
	\subsection{A-posteriori Error Analysis in Time}
	
	The time discretization error is denoted by: \(e_t = (u^m,u^c) - (u_t^m,\pi_t u^c)\) and satisfies the residual equation. Subtracting the equation of Theorem \ref{thm1} from (\ref{19}), we obtain the following result.
	
	\begin{proposition}
		For any \(t\in (t_{p-1},t_p)\) and \(w\in W\), we have:
		\begin{equation}
			\begin{array}{rl}
				& S(\partial_t(e_t^m(t,\cdot),w^m) + a^m(e_t,w^m) + a^c(e_t,w^c) = (R(t,\cdot) - \pi_t R(t,\cdot),w) \\[5pt]
				& + K\displaystyle\int_{\Omega_m} \nabla \left(u_t^m(x) - u_t^m(t,x)\right) \cdot \nabla w^m(x) \\[5pt]
				& +\dfrac{\alpha_{ex}}{2\pi}\displaystyle\int_a^b\int_0^{2\pi} \left(u_t^m(s_0(x,\theta)) - u_t^m(t,s_0(x,\theta))\right) \cdot \left(w^m(s_0(x,\theta)) - w^c(x)\right) d\theta dx.
			\end{array} \quad \label{54}
		\end{equation}
	\end{proposition}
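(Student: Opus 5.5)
The plan is to subtract the semi-discrete equation (\ref{19}), rewritten with the piecewise-affine and piecewise-constant time reconstructions, from the weak formulation (\ref{model3}) of Theorem \ref{thm1}. The key identity is pointwise in time: for $t\in(t_{p-1},t_p)$ the affine interpolant satisfies $\partial_t u_t^m(t,\cdot)=(u_p^m-u_{p-1}^m)/\tau_p$. Hence (\ref{19}), divided by $\tau_p$, reads
\[
S(\partial_t u_t^m(t,\cdot),w^m)+a^m(u_p,w^m)+a^c(u_p,w^c)=(\pi_t R(t,\cdot),w),\qquad \forall w\in W,
\]
where $\pi_t R=R_p$ on $(t_{p-1},t_p)$ and $u_p=(u_t^m(t_p),\pi_t u^c(t))$. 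This relation is constant in $t$ on the interval.

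First I will subtract this identity from (\ref{model3}) evaluated at $t$. The time-derivative terms combine into $S(\partial_t e_t^m,w^m)$, and the right-hand sides combine into $(R-\pi_t R,w)$.

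Next I will add and subtract $a^m(u_t,w^m)+a^c(u_t,w^c)$, with $u_t=(u_t^m(t,\cdot),\pi_t u^c)$, so that the left-hand side becomes $a^m(e_t,w^m)+a^c(e_t,w^c)$. What is left is $a^m(u_t-u_p,w^m)+a^c(u_t-u_p,w^c)$, moved to the right-hand side with the appropriate sign.

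The conduit component of $u_t-u_p$ vanishes, because $\pi_t u^c$ equals $u_p^c$ on the interval. In the remaining terms only the matrix difference $u_t^m(t_p,\cdot)-u_t^m(t,\cdot)$ survives. I then use the symmetric rearrangement identity stated after the definition of $\Pi$ (the one combining the exchange parts of $a^m$ and $a^c$ into a single double integral). It turns the exchange contributions of $a^m(\cdot,w^m)+a^c(\cdot,w^c)$ into
\[
\frac{\alpha_{ex}}{2\pi}\int_a^b\int_0^{2\pi}(\delta^m(s_0)-\delta^c)(w^m(s_0)-w^c)\,d\theta\,dx
\]
with $\delta^c=0$. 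The diffusive part gives $K\int_{\Omega_m}\nabla(u_t^m(t_p)-u_t^m(t))\cdot\nabla w^m$. Together these are exactly the two extra terms in (\ref{54}), where the notation $u_t^m(x)$ denotes the nodal value $u_p^m=u_t^m(t_p,x)$.

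The main obstacle is bookkeeping rather than analysis. One point is the sign convention: the residual terms must come out as $u_p-u_t(t)$ and not the reverse. The other is keeping track of the fact that the conduit residual $D\,\partial_x(\cdot)$ drops out only because $u^c$ is reconstructed by the step function. Regularity is not an issue here: $e_t\in W$ for a.e. $t$ and $\partial_t u_t^m\in L^2$, so every term is meaningful as a duality pairing.
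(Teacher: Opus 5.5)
Your proposal is correct and is essentially the paper's argument. The paper obtains (\ref{54}) in one line by subtracting the backward Euler scheme (\ref{19}) from the weak formulation of Theorem~\ref{thm1}, and your steps (dividing (\ref{19}) by $\tau_p$ so that $\partial_t u_t^m=(u_p^m-u_{p-1}^m)/\tau_p$ appears, adding and subtracting $a^m(u_t,w^m)+a^c(u_t,w^c)$, using $\pi_t u^c=u_p^c$ to kill the conduit part, and applying the symmetric rearrangement identity with $u_t^m(x)$ read as $u_p^m$) make that subtraction explicit, in the order (continuous minus discrete) that matches $e_t=(u^m,u^c)-(u_t^m,\pi_t u^c)$ and the signs in (\ref{54}).
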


	Note that: $$u_p^m - u_t^m(t,\cdot) = \dfrac{t_p - t}{\tau_p} (u_p^m - u_{p-1}^m).$$ In analogy to \cite{bernardi2004posteriori, johnson1990posteriori, nicaise2005posteriori, scott1990finite}, let's define time local indicator by:
	\[
	\eta_p = \tau_p^{1/2} \| \nabla (u_{ph}^m - u_{p-1,h}^m)\|_{L^2(\Omega_m)}.
	\]
	In contrast to the approaches in \cite{bernardi2004posteriori, johnson1990posteriori, scott1990finite}, the difference \(u_{ph}^m - u_{p-1,h}^m\) does not belong to \(H^1(\Omega_m)\). Since the discrete approximation spaces \(W_{ph}^m\) are not used in the formulation of the continuous problems (\ref{model3}) and (\ref{19}), we follow the configuration proposed in \cite{nicaise2005posteriori} for that.
	Moreover, the difference \(u_{ph}^m - u_{p-1,h}^m\) involves two distinct meshes \(\mathcal{T}_{ph}\) and \(\mathcal{T}_{p-1,h}\). Thus we shall see that difference as a piecewise \(P_k\) function on the mesh \(\mathcal{T}_{ph} \cap \mathcal{T}_{p-1,h}\) which simply is made of the intersections of elements of \(\mathcal{T}_{ph}\) with ones of \(\mathcal{T}_{p-1,h}\). The broken gradient \(\nabla (u_{ph}^m - u_{p-1,h}^m)\) is then evaluated on this latter mesh.
	
	These local indicators satisfy the following residual-based a posteriori estimate.
	
	\begin{theorem}[Upper Bound on Time Error]\label{thm3}
		For any \(n\in \{1,2,\ldots ,N\}\), one has:
		\begin{equation}
			\begin{array}{rl}
				& S\| e_t^m(t_n,\cdot)\|_{L^2(\Omega_m)}^2 + K\| e_t^m\|_{L^2(0,t_n;H_0^1(\Omega_m))}^2 + D\| e_t^c\|_{L^2(0,t_n;H_0^1(a,b))}^2 \leq \left(\displaystyle\sum_{p=1}^n \eta_p^2\right) \\[8pt]
				& \quad + \| R^m - \pi_t R^m\|_{L^2(0,t_n;H^{-1}(\Omega_m))}^2 + \| R^c - \pi_t R^c\|_{L^2(0,t_n;H^{-1}(a,b))}^2 \\[8pt]
				&\quad + \| u_t^m - u_{ht}^m\|_{L^2(0,t_n;H_0^1(\Omega_m))}^2,
			\end{array} \quad \label{55}
		\end{equation}
	\end{theorem}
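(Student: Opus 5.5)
Our approach is the standard energy argument (as in Bernardi--Verfürth and Nicaise--Soualem) applied to the residual equation (\ref{54}). We fix $t\in(t_{p-1},t_p)$ and take $w=e_t(t,\cdot)=(e_t^m,e_t^c)\in W$ as test function; this is admissible because $u_t^m(t,\cdot)$ is a convex combination of $u_{p-1}^m,u_p^m\in H_0^1(\Omega_m)$ and $\pi_t u^c$ lies in $H_0^1(a,b)$. The left-hand side of (\ref{54}) then becomes
\[
\frac{S}{2}\frac{d}{dt}\|e_t^m\|^2 + K\|\nabla e_t^m\|^2 + D\|\partial_x e_t^c\|^2 + \frac{\alpha_{ex}}{2\pi}\int_a^b\!\!\int_0^{2\pi}\bigl(e_t^m(s_0(x,\theta))-e_t^c(x)\bigr)^2 d\theta\,dx .
\]
This uses the symmetric rearrangement identity for $a^m+a^c$ stated before Theorem \ref{thm1}. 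The last term is nonnegative and will simply be dropped, or used to absorb coupling terms.

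Next we bound the right-hand side term by term, using Cauchy--Schwarz, duality and Young's inequality, and absorb half of each energy term on the left.
\begin{itemize}
\item[(i)] For the data term, $(R-\pi_tR,e_t)\le \|R^m-\pi_tR^m\|_{H^{-1}}\|\nabla e_t^m\| + \|R^c-\pi_tR^c\|_{H^{-1}}\|\partial_x e_t^c\|$, which gives $\frac{1}{K}\|R^m-\pi_tR^m\|_{H^{-1}}^2+\frac1D\|R^c-\pi_tR^c\|_{H^{-1}}^2$ plus absorbable terms.
\item[(ii)] For the diffusion term, we use $u_p^m-u_t^m(t)=\frac{t_p-t}{\tau_p}(u_p^m-u_{p-1}^m)$, which gives $K|(\nabla(u_p^m-u_{p-1}^m),\nabla e_t^m)|\le \frac K2\|\nabla(u_p^m-u_{p-1}^m)\|^2+\frac K2\|\nabla e_t^m\|^2$.
\item[(iii)] For the exchange term, Cauchy--Schwarz in $(x,\theta)$ together with Young puts half of it into the nonnegative exchange term on the left. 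The remainder is $\|\Pi$-trace of $(u_p^m-u_{p-1}^m)\|^2$, which we control by $C_\Pi^2\|\nabla(u_p^m-u_{p-1}^m)\|^2$ using the boundedness of $\Pi$ (Appendix A).
\end{itemize}

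We then integrate over $(t_{p-1},t_p)$, noting that $\int_{t_{p-1}}^{t_p}((t_p-t)/\tau_p)^2\,dt=\tau_p/3$. Summing over $p=1,\dots,n$, the $\frac{d}{dt}$ term telescopes; the initial error $e_t^m(0)=u_0^m-u_0^m$ vanishes, and $e_t^m$ is continuous in time, so no jump terms appear. This yields the left side of (\ref{55}), bounded by the data terms and by $\sum_p\tau_p\|\nabla(u_p^m-u_{p-1}^m)\|^2$, up to constants depending on $K,\alpha_{ex},C_\Pi$ that are hidden in the inequality.

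The main obstacle is the last quantity: it involves the semi-discrete iterates $u_p^m$, whereas $\eta_p$ is built from the fully discrete, nonconforming $u_{ph}^m$. We insert $u_{ph}^m-u_{p-1,h}^m$ and use the triangle inequality with the broken gradient on the intersection mesh $\mathcal{T}_{ph}\cap\mathcal{T}_{p-1,h}$:
\[
\tau_p\|\nabla(u_p^m-u_{p-1}^m)\|^2\lesssim \eta_p^2+\tau_p\|\nabla_h(u_p^m-u_{ph}^m)\|^2+\tau_p\|\nabla_h(u_{p-1}^m-u_{p-1,h}^m)\|^2 .
\]
The last two sums over $p$ are exactly $\int_0^{t_n}\|\nabla_h(u_t^m-u_{ht}^m)\|^2$, up to a factor comparing neighbouring step sizes. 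Because $u_t^m-u_{ht}^m$ is affine in time with endpoint values $u_{p-1}^m-u_{p-1,h}^m$ and $u_p^m-u_{p-1,h}^m$, its squared $L^2$-norm in time on each interval is equivalent to $\frac{\tau_p}{3}$ times the sum of the squared endpoint norms. Turning this equivalence into the clean bound requires a mild step-size comparability $\tau_{p-1}\lesssim\tau_p$, or simply accepting it inside $\lesssim$. Collecting the terms gives (\ref{55}), where the inequality should be read up to the constants from the Young splittings, since the statement's "$\le$" is really "$\lesssim$".
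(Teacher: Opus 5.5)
Your proposal is correct and follows essentially the same route as the paper: test (\ref{54}) with $e_t$, bound the right-hand side with Cauchy--Schwarz/Young and absorb, integrate over each $(t_{p-1},t_p)$ and telescope, then insert $u_{ph}^m-u_{p-1,h}^m$ to produce $\eta_p$ and finish with the local-in-time estimate (\ref{56'}) for the time-affine function $u_t^m-u_{ht}^m$. The step-size comparability you hedge about is not needed: with endpoint values $a=u_{p-1}^m-u_{p-1,h}^m$ and $b=u_p^m-u_{ph}^m$ (your text has a typo here), one has on each interval separately $\int_{t_{p-1}}^{t_p}\|\nabla_h(u_t^m-u_{ht}^m)\|^2\,dt=\frac{\tau_p}{3}\bigl(\|\nabla_h a\|^2+(\nabla_h a,\nabla_h b)+\|\nabla_h b\|^2\bigr)\ge\frac{\tau_p}{6}\bigl(\|\nabla_h a\|^2+\|\nabla_h b\|^2\bigr)$, which is exactly what (\ref{56'}) uses.
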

	
	\begin{proof}
		Taking \(w = e_t(t,\cdot)\) in (\ref{54}), thanks to Cauchy-Schwarz, Young and Poincaré inequalities applied subsequently, we obtain:
		\[
		\begin{array}{rl}
			& \dfrac{S}{2}\dfrac{d}{dt}\| e_t(t,\cdot)\|_{L^2(\Omega_m)}^2 + \dfrac{K}{2}\| e_t^m(t,\cdot)\|_{H_0^1(\Omega_m)}^2 + \dfrac{D}{2}\| e_t^c(t,\cdot)\|_{H_0^1(a,b)}^2 \\[10pt]
			& \leq \dfrac{4}{K}\| R^m(t,\cdot) - R_p^m\|_{H^{-1}(\Omega_m)}^2 + \dfrac{2}{D}\| R^c(t,\cdot) - R_p^c\|_{H^{-1}(a,b)}^2 \\[10pt]
			& \quad + \left(4K + \dfrac{\alpha_{ex}}{2}\right) \| u_p^m - u_t^m(t,\cdot)\|_{H_0^1(\Omega_m)}^2
		\end{array} \quad \label{56}
		\]
		
		Integrating over \((t_{p-1},t_p)\) and summing over \(p\in [1,n]\) yields:
		\[
		\begin{array}{rl}
			& S\| e_t^m(t_n,\cdot)\|_{L^2(\Omega_m)}^2 + K\| e_t^m(t,\cdot)\|_{L^2(0,t_n;H_0^1(\Omega_m))}^2 + D\| e_t^c(t,\cdot)\|_{L^2(0,t_n;H_0^1(a,b))}^2 \\[5pt]
			& \lesssim \dfrac{1}{K}\| R^m(t,\cdot) - R_p^m\|_{L^2(0,t_n;H^{-1}(\Omega_m))}^2 + \dfrac{1}{D}\| R^c(t,\cdot) - R_p^c\|_{L^2(0,t_n;H^{-1}(a,b))}^2 \\[5pt]
			& \quad + \left(4K + \dfrac{\alpha_{ex}}{2}\right) \sum_{p=1}^n \tau_p \| u_p^m - u_{p-1}^m\|_{H_0^1(\Omega_m)}^2.
		\end{array} \quad 
		\]
		
		With triangular inequality and Young's inequality, we have:
		\[
		\tau_p \| u_p^m - u_{p-1}^m\|_{H_0^1(\Omega_m)}^2 \lesssim \eta_p^2 + \tau_p \| u_p^m - u_{p,h}^m\|_{H_0^1(\Omega_m)}^2 + \tau_p \| u_{p-1}^m - u_{p-1,h}^m\|_{H_0^1(\Omega_m)}^2.
		\]
		
		As we have \((u_t^m - u_h^m)(t,\cdot) = \frac{t - t_{p-1}}{\tau_p}(u_p^m - u_{p,h}^m) + \frac{t_p - t}{\tau_p}(u_{p-1}^m - u_{p-1,h}^m)\), integrating the square of its norm and using Cauchy-Schwarz inequality and Young's inequality lead to:
		\begin{eqnarray}
		\tau_p \| u_p^m - u_{p,h}^m\|_{H_0^1(\Omega_m)}^2 + \tau_p \| u_{p-1}^m - u_{p-1,h}^m\|_{H_0^1(\Omega_m)}^2 \lesssim \int_{t_{p-1}}^{t_p} \| (u_t^m - u_h^m)(t,\cdot)\|_{H_0^1(\Omega_m)}^2 dt. \quad \label{56'}
		\end{eqnarray}
		This completes the proof.
			\end{proof}
		
		As a consequence of the upper bound in Theorem \ref{thm3}, we obtain a bound on the time derivative of the error in the dual norm.

	\begin{corollary}
		For any \(n \in [1, N]\), we have:
		\begin{equation}
			\begin{array}{rl}
				& \| \partial e_t^m\|_{L^2(0,t_n;H^{-1}(\Omega_m))} \lesssim \left( \displaystyle\sum_{p=1}^n \eta_p^2\right) + \| R^m - \pi_t R^m\|_{L^2(0,t_n;H^{-1}(\Omega_m))}^2 + \| R^c - \pi_t R^c\|_{L^2(0,t_n;H^{-1}(a,b))}^2  \\[5pt]
				& \qquad  + \| u_t^m - u_h^m\|_{L^2(0,t_n;H_0^1(\Omega_m))}^2.
			\end{array} \quad \label{57}
		\end{equation}
	\end{corollary}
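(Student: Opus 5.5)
The plan is to read $\partial_t e_t^m$ off the residual equation (\ref{54}) by duality and then bound every term on its right-hand side with quantities already controlled in Theorem \ref{thm3}.

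\textbf{Step 1 (duality).} Fix $t\in(t_{p-1},t_p)$ and $w^m\in H_0^1(\Omega_m)$, and test (\ref{54}) with $w=(w^m,0)$. This isolates
\[
S\,(\partial_t e_t^m(t,\cdot),w^m)=(R^m(t,\cdot)-\pi_tR^m(t,\cdot),w^m)-a^m(e_t,w^m)+K\int_{\Omega_m}\nabla\bigl(u_p^m-u_t^m(t,\cdot)\bigr)\cdot\nabla w^m+\mathcal{X}(w^m),
\]
where $\mathcal{X}$ denotes the exchange term of (\ref{54}) with $w^c=0$. I will then divide by $\|w^m\|_{H_0^1(\Omega_m)}$ and take the supremum.

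\textbf{Step 2 (pointwise bounds).} Each term is estimated separately:
\begin{itemize}
\item The source term is bounded by $\|R^m-\pi_tR^m\|_{H^{-1}(\Omega_m)}$.
\item For $a^m(e_t,w^m)$, the gradient part is at most $K\|e_t^m\|_{H_0^1}$. The exchange part is bounded using the boundedness of $\Pi$ ($\|\Pi v\|_{L^2(a,b)}\le C_\Pi\|v\|_{H_0^1}$), applied to both $\Pi(e_t^m w^m)$ and $e_t^c\,\Pi w^m$. This gives a bound by $\|e_t^m\|_{H_0^1}+\|e_t^c\|_{H_0^1(a,b)}$, after using Poincaré on $(a,b)$.
\item The last two terms are bounded by $\|u_p^m-u_t^m(t,\cdot)\|_{H_0^1}\le\|u_p^m-u_{p-1}^m\|_{H_0^1}$, again using $C_\Pi$.
\end{itemize}
Since $S>0$ is fixed, these combine to
\[
\|\partial_te_t^m(t)\|_{H^{-1}}\lesssim \|R^m-\pi_tR^m\|_{H^{-1}}+\|e_t^m\|_{H_0^1}+\|e_t^c\|_{H_0^1(a,b)}+\|u_p^m-u_{p-1}^m\|_{H_0^1}.
\]

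\textbf{Step 3 (integrate and close).} Square the bound and integrate over $(t_{p-1},t_p)$, then sum over $p\le n$.
\begin{itemize}
\item The terms $\|e_t^m\|^2_{L^2(0,t_n;H_0^1)}$ and $\|e_t^c\|^2_{L^2(0,t_n;H_0^1)}$ are controlled by the right-hand side of (\ref{55}) in Theorem \ref{thm3}.
\item The terms $\tau_p\|u_p^m-u_{p-1}^m\|^2_{H_0^1}$ are handled exactly as in the proof of Theorem \ref{thm3}: the triangle inequality gives $\eta_p^2$ plus discrete errors, and (\ref{56'}) bounds the discrete errors by $\|u_t^m-u_h^m\|^2_{L^2(0,t_n;H_0^1)}$.
\item The $R^c$ contribution enters only through Theorem \ref{thm3}.
\end{itemize}
The result is a bound on $\|\partial_te_t^m\|^2_{L^2(0,t_n;H^{-1})}$ by the right-hand side of (\ref{57}). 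Since (\ref{57}) is written with the unsquared norm on the left, I will state it for the squared norm, which is the consistent form; the unsquared version would not be scale-consistent otherwise.

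\textbf{Expected obstacle.} The delicate point is the exchange term. The function $w^m$ is only in $H_0^1$, while $\Pi(e_t^m w^m)$ involves a product of traces on $\Gamma$. I plan to handle it by Cauchy--Schwarz on $\Gamma$, combined with the trace bound $\|v\|_{L^2(\Gamma)}\lesssim\|v\|_{H^1(\Omega_m)}$ that underlies $C_\Pi$ (Appendix A), rather than applying $\Pi$ to the product directly.
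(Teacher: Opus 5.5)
Your proposal is correct and follows essentially the paper's argument. Both test the residual equation (\ref{54}) with $(w^m,0)$ and bound each term by duality using Cauchy--Schwarz, Poincaré and the bound (\ref{a2}) on $\int_a^b\Pi(vw)$; then you square, integrate in time, and close with Theorem \ref{thm3} and the steps of its proof. You are slightly more careful than the paper: you keep the coupling term $\alpha_{ex}\int_a^b e_t^c\,\Pi w^m$ that the paper's display omits, and you correctly note that the argument only yields the squared norm on the left of (\ref{57}), which is also all the paper's own proof delivers.
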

	
	\begin{proof}
		For \(t \in (t_{p-1}, t_p)\), we have:
		\[
		\| \partial_t e_t^m(t,\cdot)\|_{H^{-1}(\Omega_m)} = \sup_{\nu \in H_0^1(\Omega_m)} \frac{(\partial_t e_t^m(t,\cdot),\nu^m)}{\| \nu^m\|_{H_0^1(\Omega_m)}}.
		\]
		
		Taking \(e_t\) as \((e_t^m, 0)\) in the time residual equation (\ref{54}), for any \(\nu \in H_0^1(\Omega_m)\), we obtain:
		\begin{eqnarray}
		\begin{array}{rl}
			& S(\partial_t e_t^m(t,\cdot),\nu^m) = \left(R^m(t,\cdot) - \pi_t R^m,\nu^m\right) + K\displaystyle\int_{\Omega_m} \nabla (u_p^m(x) - u_t^m(t,\cdot)) \cdot \nabla \nu^m(x) dx \\[5pt]
			& + \alpha_{ex}\displaystyle\int_a^b \Pi \left((u_p^m - u_t^m(t,\cdot))\nu^m\right)(x) dx - K(\nabla e_t^m(t,\cdot),\nabla \nu^m) - \alpha_{ex}\int_a^b \Pi \left(e_t^m(t,\cdot)\nu^m\right)(x) dx
		\end{array} \quad \label{54'}
		\end{eqnarray}
		
		Applying Cauchy-Schwarz and Poincaré inequalities and the estimate (A.2), we have:
		\[
		S(\partial_t e_t^m(t,\cdot),\nu^m) \lesssim \| R^m(t,\cdot) - R_p^m\|_{H^{-1}(\Omega_m)} \| \nu^m\|_{H_0^1(\Omega_m)} + (K + \alpha_{ex}) \| e_t^m(t,\cdot)\|_{H_0^1(\Omega_m)} \| \nu^m\|_{H_0^1(\Omega_m)}
		\]
		\[
		\qquad + (K + \alpha_{ex}) \| u_p^m - u_t^m(t,\cdot)\|_{H_0^1(\Omega_m)} \| \nu^m\|_{H_0^1(\Omega_m)}
		\]
		
		The proof ends with Young's inequality and the estimate (\ref{55}).
	\end{proof}
	
	The following result gives a reliability estimate from below, ensuring that the error indicator \(\eta_p\) reflects the true error.
	
	\begin{theorem}[Lower Bound on Time Error]
		For any \(p \in [1, N]\), we have:
		\begin{equation}
			\begin{array}{rl}
				\eta_p^2 \lesssim & \| \partial_t e_t^m\|_{L^2(t_{p-1},t_p,H^{-1}(\Omega_m))}^2 + \| e_t^m\|_{L^2(t_{p-1},t_p,H_0^1(\Omega_m))}^2 + \| e_t^c\|_{L^2(t_{p-1},t_p,H_0^1(a,b))}^2 \\[5pt]
				& + \| R - \pi_t R\|_{L^2(t_{p-1},t_p,H^{-1}(\Omega_m))}^2 + \| u_t^m - u_{h\tau}^m\|_{L^2(t_{p-1},t_p,H_0^1(\Omega_m))}^2.
			\end{array} \quad \label{58}
		\end{equation}
	\end{theorem}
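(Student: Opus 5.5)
We follow the standard route of Bernardi–Verfürth and Nicaise–Soualem for backward Euler: read the residual equation (\ref{54}) (equivalently (\ref{54'})) backwards, so that it expresses the quantity $u_p^m-u_t^m(t,\cdot)$ in terms of the error and the data oscillation.

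Fix $p$ and $t\in(t_{p-1},t_p)$. Recall that
\[
u_p^m-u_t^m(t,\cdot)=\frac{t_p-t}{\tau_p}\,(u_p^m-u_{p-1}^m),
\qquad
\int_{t_{p-1}}^{t_p}\Big(\frac{t_p-t}{\tau_p}\Big)^2dt=\frac{\tau_p}{3}.
\]
Hence
\[
\tau_p\|u_p^m-u_{p-1}^m\|_{H_0^1(\Omega_m)}^2 = 3\|u_p^m-u_t^m\|_{L^2(t_{p-1},t_p;H_0^1(\Omega_m))}^2 .
\]

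\textbf{Step 1 (dual-norm bound).} From (\ref{54'}), the term $K(\nabla(u_p^m-u_t^m(t,\cdot)),\nabla\nu^m)+\alpha_{ex}\int_a^b\Pi((u_p^m-u_t^m(t,\cdot))\nu^m)$ equals
\[
S(\partial_te_t^m,\nu^m)-(R^m-\pi_tR^m,\nu^m)+K(\nabla e_t^m,\nabla\nu^m)+\alpha_{ex}\int_a^b\Pi(e_t^m\nu^m),
\]
with the conduit coupling contribution $\alpha_{ex}(\pi_t u^c-u^c,\Pi\nu^m)$ added if one keeps the full form from (\ref{54}). Test with $\nu^m=u_p^m-u_t^m(t,\cdot)\in H_0^1(\Omega_m)$. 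The left side is bounded below by $K\|u_p^m-u_t^m\|^2_{H_0^1}$, since the $\Pi$ term is nonnegative. On the right, use the dual norms, the bound $\|\Pi v\|\le C_\Pi\|v\|_{H_0^1}$ and the Poincaré inequality on $(a,b)$. This gives
\[
\|u_p^m-u_t^m(t,\cdot)\|_{H_0^1}\lesssim \|\partial_te_t^m\|_{H^{-1}}+\|e_t^m\|_{H_0^1}+\|e_t^c\|_{H_0^1(a,b)}+\|R^m-\pi_tR^m\|_{H^{-1}} .
\]

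\textbf{Step 2 (integrate in time).} Square this bound and integrate over $(t_{p-1},t_p)$. By the identity above, this yields the same bound for $\tau_p\|u_p^m-u_{p-1}^m\|_{H_0^1}^2$, with the $L^2(t_{p-1},t_p;\cdot)$ norms appearing on the right of (\ref{58}).

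\textbf{Step 3 (pass to discrete quantities).} The indicator $\eta_p$ involves $u_{ph}^m-u_{p-1,h}^m$, which is nonconforming and lives on the intersection mesh. Write
\[
u_{ph}^m-u_{p-1,h}^m=(u_p^m-u_{p-1}^m)-(u_p^m-u_{ph}^m)+(u_{p-1}^m-u_{p-1,h}^m),
\]
with all gradients taken as broken gradients. The triangle inequality then gives
\[
\eta_p^2\lesssim \tau_p\|u_p^m-u_{p-1}^m\|^2_{H_0^1}+\tau_p\big(\|u_p^m-u_{ph}^m\|^2+\|u_{p-1}^m-u_{p-1,h}^m\|^2\big).
\]
The last two terms are controlled by $\|u_t^m-u_{h\tau}^m\|^2_{L^2(t_{p-1},t_p;H_0^1)}$ through (\ref{56'}), which is a simple computation for affine-in-time functions: the Gram matrix of $\{(t-t_{p-1})/\tau_p,\,(t_p-t)/\tau_p\}$ on the interval is $\tau_p$ times a positive definite matrix.

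\textbf{Main obstacle.} The hard part is Step 3. There the broken $H^1$ norm on the intersection mesh of $\mathcal T_{ph}$ and $\mathcal T_{p-1,h}$ must be compared with the norms used for $u_t^m-u_{h\tau}^m$. One has to make sure the latter is understood in the broken sense on the same refined mesh. Then the triangle inequality holds elementwise and no jump terms appear, because conforming functions have vanishing jumps. A secondary care point is the coupling term in Step 1: the term $\alpha_{ex}(u^c-\pi_tu^c,\Pi\nu^m)$ must be absorbed into the $\|e_t^c\|$ contribution, which is why $\|e_t^c\|_{L^2(t_{p-1},t_p;H_0^1(a,b))}$ appears on the right of (\ref{58}).
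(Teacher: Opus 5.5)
Your proposal is correct and follows the paper's own argument. You test the time residual equation (\ref{54}) with $(u_p^m-u_t^m(t,\cdot),0)$, bound the right-hand side by Cauchy--Schwarz, Poincar\'e and the estimate on $\Pi$, integrate over $(t_{p-1},t_p)$ using $\int_{t_{p-1}}^{t_p}\bigl((t_p-t)/\tau_p\bigr)^2\,dt=\tau_p/3$, and close with the triangle inequality and (\ref{56'}). The one difference is that you explicitly keep the coupling term $\alpha_{ex}\,(u^c-\pi_t u^c,\Pi \nu^m)$ coming from $a^m(e_t,\cdot)$, which the paper's displayed identity omits; this term is what justifies the $\|e_t^c\|_{L^2(t_{p-1},t_p;H_0^1(a,b))}$ contribution in the bound.
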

	
	\begin{proof}
		Using triangular inequality and Young's inequality, we have:
		\[
		\eta_p^2 \leq \tau_p \| \nabla (u_p^m - u_{p,h}^m)\|_{L^2(\Omega_m)}^2 + \tau_p \| \nabla (u_{ph}^m - u_p^m)\|_{L^2(\Omega_m)}^2 + \tau_p \| \nabla (u_{p-1}^m - u_{p-1,h}^m)\|_{L^2(\Omega_m)}^2.
		\]
		
		Taking \((u_p^m - u_t^m(t,\cdot),0)\) as \(w\) in the residual equation (\ref{54}) leads to:
		\[
		\begin{array}{rl}
			& K\| \nabla (u_p^m - u_t^m(t,\cdot))\|_{L^2(\Omega_m)}^2 + \alpha_{ex}\displaystyle\int_a^b \Pi (u_p^m - u_t^m(t,\cdot)^2)(x) dx \\[5pt]
			= & -S\Big(\partial_t e_t^m(t,\cdot),u_p^m - u_t^m(t,\cdot)\Big) - K\Big(\nabla e_t^m(t,\cdot),\nabla (u_p^m - u_t^m(t,\cdot))\Big) \\[5pt]
			& - \alpha_{ex}\displaystyle\int_a^b \Pi \Big(e_t^m(u_p^m - u_t^m(t,\cdot))\Big)(x) dx + \Big(R(t,\cdot) - \pi_t R, u_p^m - u_t^m(t,\cdot)\Big)
		\end{array} \quad \label{59}
		\]
		
		Applying the Cauchy-Schwarz and Poincaré inequalities with the estimate (\ref{a2}), we obtain:
		\begin{eqnarray}\nonumber
		\frac{K}{2}\| \nabla (u_p^m - u_t^m(t,\cdot))\|_{L^2(\Omega_m)}^2 &\lesssim& \frac{S}{8K}\| \partial_t e_t^m(t,\cdot)\|_{H^{-1}(\Omega_m)}^2 + \left(\frac{K}{8} + \frac{\alpha_{ex}}{8K}\right)\| e_t^m(t,\cdot)\|_{H_0^1(\Omega_m)}^2 \\
		&+& \frac{1}{K}\| R(t,\cdot) - \pi_t R(t,\cdot)\|_{H^{-1}(\Omega_m)}^2
		\end{eqnarray}
		
		Since \(u_p^m - u_t^m(t,\cdot) = \frac{t_p - t}{\tau_p}(u_p^m - u_{p-1}^m)\) and \(\int_{t_{p-1}}^{t_p} \left(\frac{t_p - t}{\tau_p}\right)^2 = \frac{\tau_p}{3}\), integrating this latter inequality, we obtain:
		\begin{eqnarray}\nonumber
		\tau_p \| \nabla (u_p^m - u_{p-1}^m)\|_{L^2(\Omega_m)}^2 &\lesssim& \| \partial_t e_t^m\|_{L^2(t_{p-1},t_p,H^{-1}(\Omega_m))}^2 + \| e_t^m\|_{L^2(t_{p-1},t_p,H_0^1(\Omega_m))}^2\\
		& +& \| R - \pi_t R\|_{L^2(t_{p-1},t_p,H^{-1}(\Omega_m))}^2
		\end{eqnarray}
		
		Applying the estimate (\ref{56'}) leads to (\ref{58}).
	\end{proof}
	
	\subsection{A-posteriori Error Analysis in Space}
	
	The space discretization error is defined by \(e_{ph} = u_p - u_{ph}\).
	
	\subsubsection{A-posteriori Error Analysis in Pipe Conduit}
	
	For any \(w \in H_0^1(a,b)\), we have:
	\begin{equation}
		B(e_p^c,w) = \sum_{E\in \mathcal{P}_{ph}} G_p^E \cdot w + \alpha_{ex}(\Pi e_p^m,w), \quad \label{59'}
	\end{equation}
	where \(G_p^E\) is the residual given by:
	\[
	G_p^E = R_p^c + D\partial_{xx}^2 u_{ph}^c + \alpha_{ex}\left(u_{ph}^c - \Pi u_{ph}^m\right).
	\]
	
	Moreover, for any \(w_{ph} \in W_{ph}^c\), we have:
	\begin{equation}
		B(e_p^c,w_{ph}) = \alpha_{ex}(\Pi e_p^m,w_{ph}). \quad \label{60}
	\end{equation}
	
	As usual \cite{ainsworth2000posteriori}, the exact residuals \(G_p^E\) are replaced by an approximate element residuals. We denote respectively by \(\overline{G_p^E}\) the projection in convenient finite dimensional spaces of \(G_p^E\). This consists in taking the approximation of \(R_{ph}^c\) as projection of \(R_p^c\) in finite dimensional spaces since \(R_p^c\) is the only one that belongs to an infinite dimensional space. Usually, \(R_{ph}^c\) is considered as the mean value of \(R_p^c\), i.e
	\[
	R_{ph}^c = \frac{1}{|\Omega_c|}\int_{\Omega_c} R_p^c(x) dx.
	\]
	
	\begin{definition}
		Let \(p \geq 1\). The local error estimate \(\eta_p^E\) is defined by:
		\[
		\eta_E^p = h_E \| \overline{G}_E^p\|_E.
		\]
		
		The global spatial error estimator \(\eta_p^c\) in the pipe conduit is given by:
		\[
		\eta_p^c = \left( \sum_{E\in \mathcal{P}_{ph}} (\eta_E^p)^2 \right)^{1/2}.
		\]
		
		The local and global approximation terms are defined by:
		\[
		\xi_E^p = h_E \| R_p^c - R_{ph}^c\|_{\omega_E} \quad \text{and} \quad \xi_p^c = \left( \sum_{E\in \mathcal{P}_{ph}} (\xi_p^E)^2 \right)^{1/2}.
		\]
	\end{definition}
	
	\begin{theorem}\label{G1}
		For any \(p \in [1, N]\), we have:
		\begin{equation}
			\| e_p^c\|_{H_0^1(a,b)} \lesssim \eta_p^c + \zeta_p^c + \alpha_{ex} \| e_p^m\|_{H_0^1(\Omega_m)}. \quad \label{61}
		\end{equation}
		
		For any \(p \in [1, N]\) and \(E \in \mathcal{P}_{ph}\), we have also:
		\begin{equation}
			\eta_E^p \lesssim \xi_E^p + h_E \left( \alpha_{ex} \| e_p^m\|_E + D\| \partial_x e_p^c\|_E + \alpha_{ex} \| e_p^c\|_E \right). \quad \label{62}
		\end{equation}
	\end{theorem}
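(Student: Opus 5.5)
The plan is the standard residual-based argument for a 1D conforming $P_1$ discretization, with the coupling term handled as a perturbation. Throughout, $\zeta_p^c$ is read as the data-oscillation term $\xi_p^c$.

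\textbf{Upper bound (\ref{61}).} Let $w\in H_0^1(a,b)$ be arbitrary and set $w_h=I_Lw\in W_{ph}^c$.
\begin{enumerate}
\item Subtract (\ref{60}) from (\ref{59'}) to obtain Galerkin orthogonality up to the coupling:
\[
B(e_p^c,w)=\sum_{E\in\mathcal{P}_{ph}}\int_E G_p^E\,(w-w_h)+\alpha_{ex}(\Pi e_p^m,w).
\]
Since $u_{ph}^c$ is continuous and piecewise linear, there are no interelement jump terms: the nodal flux jumps are killed because $w-I_Lw$ vanishes at the nodes.
\item Write $G_p^E=\overline{G_p^E}+(R_p^c-R_{ph}^c)$. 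Apply Cauchy--Schwarz elementwise, then the interpolation estimate (\ref{23}), to get
\[
\sum_E \int_E G_p^E (w-w_h)\lesssim (\eta_p^c+\xi_p^c)\,\|w\|_{H^1(\Omega_c)}.
\]
\item Bound the coupling term by
\[
\alpha_{ex}\|\Pi e_p^m\|_{L^2(a,b)}\|w\|_{L^2(a,b)}\le \alpha_{ex}C_\Pi C_{ab}\,\|e_p^m\|_{H_0^1(\Omega_m)}\|w\|_{H_0^1(a,b)}.
\]
This uses boundedness of $\Pi$ (Appendix A) together with Poincaré.
\item Choose $w=e_p^c$ and use the coercivity (\ref{42}) of $B$, which requires $\beta_p>0$ from (\ref{43}). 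Dividing by $\|e_p^c\|_{H_0^1(a,b)}$ yields (\ref{61}).
\end{enumerate}

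There is one subtlety. Because $W_{ph}^m$ is nonconforming, $e_p^m$ is only broken-$H^1$, so its norm should be read as the broken norm. To make $\Pi e_p^m$ meaningful, I would argue elementwise via the trace inequality underlying Appendix A.

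\textbf{Lower bound (\ref{62}).} Use the 1D bubble $b_E=4(x-a_i)(a_{i+1}-x)/h_E^2$ on $E=[a_i,a_{i+1}]$. $\overline{G_p^E}$ is a polynomial (affine or constant) on $E$, so the inverse estimates
\[
\|\overline{G_p^E}\|_E^2\lesssim\int_E \overline{G_p^E}^{\,2}b_E,\qquad \|\partial_x(\overline{G_p^E}b_E)\|_E\lesssim h_E^{-1}\|\overline{G_p^E}\|_E
\]
hold by scaling to a reference interval.
\begin{enumerate}
\item Test (\ref{59'}) with $w=\overline{G_p^E}b_E$, extended by zero, which lies in $H_0^1(a,b)$. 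Then
\[
\int_E \overline{G_p^E}^{\,2}b_E=B(e_p^c,w)-\alpha_{ex}(\Pi e_p^m,w)-\int_E(R_p^c-R_{ph}^c)w .
\]
\item Expand $B(e_p^c,w)=D(\partial_xe_p^c,\partial_xw)_E+\alpha_{ex}(e_p^c,w)_E$ and bound each term by Cauchy--Schwarz and the inverse estimates. This gives
\[
\|\overline{G_p^E}\|_E^2\lesssim\Big(D h_E^{-1}\|\partial_xe_p^c\|_E+\alpha_{ex}\|e_p^c\|_E+\alpha_{ex}\|\Pi e_p^m\|_E+\|R_p^c-R_{ph}^c\|_E\Big)\|\overline{G_p^E}\|_E .
\]
\item Multiply by $h_E$ to obtain the claimed bound. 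Read literally, this gives $D\|\partial_xe_p^c\|_E$ without the factor $h_E$ that appears in the statement; I would either note this discrepancy or absorb it using $h_E\lesssim 1$ in the correct direction. I would also interpret $\|e_p^m\|_E$ as $\|\Pi e_p^m\|_E$.
\end{enumerate}

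\textbf{Main obstacle.} The delicate step is the treatment of $\Pi e_p^m$. In the upper bound it needs a broken trace or averaging bound for the nonconforming error. In the lower bound it needs to be localized to $E$, as the cylinder slice $\{s_0(x,\theta):x\in E\}$. I would handle the lower bound by keeping $\|\Pi e_p^m\|_E$ as the local quantity. For the upper bound I would apply the Appendix A estimate on each element cut by $\Gamma$, summing trace inequalities.
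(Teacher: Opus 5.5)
Your argument follows the paper's. For (\ref{61}) the paper also tests the residual identity with $\nu_p^c-I_L\nu_p^c$ and combines (\ref{23}), Cauchy--Schwarz, the boundedness of $\Pi$ and the coercivity of $B$. For (\ref{62}) it also tests (\ref{59'}) with $b_E\overline{G_E^p}$ and invokes bubble inverse estimates. Where you differ, you are more careful than the paper. Its identity (\ref{59'}) omits the nodal flux jumps of $u_{ph}^c$, which drop out only because $w-I_Lw$ and $b_E$ vanish at the nodes, as you note. (Incidentally, $B(v,v)\ge D\|\partial_x v\|_{L^2(a,b)}^2$ holds unconditionally, so you do not need $\beta_p>0$.)

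The point you must not leave open is the factor $h_E$ in front of $D\|\partial_x e_p^c\|_E$. Your bookkeeping is right and the paper's is not. The paper reaches (\ref{62}) only by bounding $\int_E\partial_x e_p^c\,\partial_x(\overline{G_E^p}b_E)$ by $\|\partial_x e_p^c\|_E\|\overline{G_E^p}\|_E$. That drops the factor $h_E^{-1}$ in $\|\partial_x(\overline{G_E^p}b_E)\|_E\lesssim h_E^{-1}\|\overline{G_E^p}\|_E$.

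Your fallback of absorbing the discrepancy via $h_E\lesssim 1$ goes the wrong way: $h_E\lesssim 1$ gives $h_E D\|\partial_x e_p^c\|_E\lesssim D\|\partial_x e_p^c\|_E$, not the converse. Nothing else can rescue it either, because (\ref{62}) as printed is false in general. Take $\alpha_{ex}=0$ and $R_p^c\equiv 1$; the bubble argument never uses $\alpha_{ex}>0$. Then $u_{ph}^c$ is the nodal interpolant of $u_p^c$, so $\xi_E^p=0$, $\overline{G_E^p}\equiv 1$ and $\eta_E^p=h_E^{3/2}$. On the other side, $\partial_x e_p^c$ is affine on $E$ with slope $-1/D$ and zero mean, so $h_E D\|\partial_x e_p^c\|_E=h_E^{5/2}/(2\sqrt{3})$. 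So commit to the corrected local bound
\[
\eta_E^p\lesssim \xi_E^p+D\|\partial_x e_p^c\|_E+h_E\,\alpha_{ex}\bigl(\|\Pi e_p^m\|_E+\|e_p^c\|_E\bigr),
\]
which is exactly what your steps produce.

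Two further steps are not yet justified; the paper skips them as well.

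First, your inverse estimates assume $\overline{G_E^p}$ is affine on $E$. It contains $\Pi u_{ph}^m$, which is not a polynomial in $x$ on $E$ unless the matrix mesh is extruded along $x$ with layers matching $\mathcal{P}_{ph}$. Otherwise the circles $\Gamma_x$ cross different elements at $x$-dependent angles. In general you must project this term too, and add $h_E\,\alpha_{ex}\|\Pi u_{ph}^m-P_E\Pi u_{ph}^m\|_E$ to $\xi_E^p$, where $P_E$ is the $L^2(E)$-projection onto $\mathbb{P}^1(E)$.

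Second, your plan to bound $\|\Pi e_p^m\|_{L^2(a,b)}$ by summing trace inequalities on elements cut by $\Gamma$ does not close. It leaves terms $h_T^{-1}\|e_p^m\|_T^2$ that the broken energy norm does not control. You need a discrete trace inequality on $H_0^1(\Omega_m)+W_{ph}^m$, for instance via a conforming companion of the Crouzeix--Raviart part that exploits the zero-mean face jumps. On anisotropic meshes its constant must also be checked.
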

	
	\begin{proof} Consider $v_p\in H^1(a,b)$. For any $w_{ph}^c\in W_{ph}^c$, using the equations (\ref{59'}) and (\ref{60}), we have:
		\[
		B(e_p^c,\nu_p^c) = \sum_{E\in \mathcal{P}_{ph}} G_E^p \cdot (\nu_p^c - w_{ph}^c) + \alpha_{ex}(\Pi e_p^m,w_{ph}^c).
		\]
		
		Setting \(w_{ph}^c = I_L \nu_p^c\), applying Cauchy-Schwarz and the estimate (\ref{a1}) leads to:
		\[
		B(e_p^c,\nu_p^c) \lesssim \sum_{E\in \mathcal{P}_{ph}} h_E \| G_E^p\|_E \| \nu_p\|_{H_0^1(a,b)} + \alpha_{ex} \| e_p^m\|_{H_0^1(\Omega_m)} \| \nu_p\|_{H_0^1(a,b)}.
		\]
		
		Setting \(\nu_p^c = e_p^c\) and using the coercivity of \(B\), we get:
		\[
		\| e_p^c\|_{H_0^1(a,b)} \lesssim \sum_{E\in \mathcal{P}_{ph}} h_E \| G_E^p\|_E + \alpha_{ex} \| e_p^m\|_{H_0^1(\Omega_m)}.
		\]
		
		Applying triangular inequality and Cauchy-Schwarz inequality yield to (\ref{61}).
		
		Consider \(E\in \mathcal{P}_{ph}\) and \(b_E\) a one dimensional bubble function associated with segment element \(E\). Taking \(\nu_p = b_E \overline{G_E^p}\) in the residual equation (\ref{59'}), we obtain:
		\[
		\int_E \overline{G_E^p}^2 b_E = \int_E (\overline{G_E^p} - G_E^p) \cdot (\overline{G_E^p} b_E) + D\int_E \Pi e_p^m \cdot (\overline{G_E^p} b_E) + \alpha_{ex} \int_E \partial_x e_p^c \cdot \partial_x (\overline{G_E^p} b_E) + \alpha_{ex} \int_E e_p^c \cdot (\overline{G_E^p} b_E)
		\]
		
		Applying Cauchy-Schwarz inequality and similar result to Theorem 2.2 of \cite{ainsworth2000posteriori}, we obtain:
		\[
		\| \overline{G_E^p}\|_E^2 \lesssim \left( \| \overline{G_E^p} - G_E^p\|_E + \alpha_{ex} \| e_p^m\|_E + D \| \partial_x e_p^c\|_E + \alpha_{ex} \| e_p^c\|_E \right) \times \| \overline{G_E^p}\|_E
		\]
		
		As the interior residual \(\overline{G_E^p}\) does vanish on \(E\), we have (\ref{62}).
	\end{proof}
	
	\subsubsection{A-posteriori Error Analysis in Porous Matrix}
	
	For any \(w^m \in H_0^1(\Omega_m)\), we have:
	\begin{eqnarray}\nonumber
		A_p(e_p^m,w^m) &=& \tau_p \sum_{T\in \mathcal{T}_{ph}} \left[ \int_T G_T^p \cdot w^m + \int_{\partial T} \left(-K\frac{\partial w_{ph}^m}{\partial n_T}\right) \cdot w^m \right] + S(e_{p-1,h}^m,w^m) \\&+& \tau_p \alpha_{ex} \sum_{E\in \mathcal{P}_{ph}} \int_E e_p^c \Pi w^m \quad \label{63}
	\end{eqnarray}
	where \(n_T\) is unit outer normal vector of \(T\) on \(\partial T\) and
	\[
	G_T^p = R_T^m - S\frac{u_{ph}^m - u_p^m}{\tau_p} + \operatorname{div}(K\nabla u_{ph}^m) + \frac{\alpha_{ex}}{2\pi}(u_{ph}^m - u_{ph}^c)\delta_{\Gamma}.
	\]
	
	Let \(F\) be an interior face of the mesh \(\mathcal{T}_{ph}\), shared by two elements \(T_1\) and \(T_2\). The jump of the normal derivative of \(u_{ph}^m\) across \(F\) is defined as:
	\[
	\left[\frac{\partial u_{ph}^m}{\partial n_F}\right] = \nabla u_{ph}^m \cdot n_{T_1} + \nabla u_{ph}^m \cdot n_{T_2},
	\]
	where \(n_{T_1}\) and \(n_{T_2}\) denote the outward unit normals to \(F\) with respect to \(T_1\) and \(T_2\), respectively. We then define:
	\[
	g_F^p = -K\left[\frac{\partial u_{ph}^m}{\partial n_F}\right],
	\]
	for every interior face \(F \in \mathcal{F}_{ph}\).
	
	The second term of (\ref{63}) can be decomposed over all interior faces using the jump \(g_F^p\). Each interior face \(F\) is shared by two elements, and the contributions from both sides are added. Thus, for any \(w^m \in H_0^1(\Omega_m)\), we have:
	\begin{equation}
		A_p(e_p^m,w^m) = \tau_p \sum_{T\in \mathcal{T}_{ph}} \int_T G_T^p \cdot w^m + \tau_p \sum_{F\in \mathcal{F}_{\mathrm{int}}} \int_F g_F^p \cdot w^m + S(e_{p-1,h}^m,w^m) + \tau_p \alpha_{ex} \sum_{E\in \mathcal{P}_{ph}} \int_E e_p^c \Pi w^m. \quad \label{64}
	\end{equation}
	
	Moreover, for any \(w_{ph}^m \in W_{ph}^m\), we have:
	\begin{equation}
		A_p(e_p^m,w_{ph}^m) = S(e_{p-1,h}^m,w_{ph}^m) + \tau_p \alpha_{ex} \sum_{E\in \mathcal{P}_{ph}} \int_E e_p^c \Pi w_{ph}^m. \quad \label{65}
	\end{equation}
	
	\begin{lemma}[Galerkin Orthogonality]
		For any \(w_{ph}^m \in W_{ph}^m\), we have:
		\begin{equation}
			K\sum_{T\in \mathcal{T}_{ph}} \int_T \nabla e_p^m \cdot \nabla w_{ph}^m = -S\left(\frac{e_p^m - e_{p-1}^m}{\tau_p}, w_{ph}^m\right) - \frac{\alpha_{ex}}{2\pi} \sum_{T\in \mathcal{T}_{ph}} \int_T (e_p^m - e_p^c)\delta_{\Gamma} \cdot w_{ph}^m \quad \label{66}
		\end{equation}
	\end{lemma}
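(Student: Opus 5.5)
The identity (\ref{66}) is the discrete analogue of Galerkin orthogonality, so I will derive it by testing the semi-discrete problem (\ref{19}) and the full discretization (\ref{20'}) with the same discrete function and subtracting. The complication is nonconformity: \(w_{ph}^m\in W_{ph}^m\not\subset H_0^1(\Omega_m)\), so (\ref{19}) cannot be tested with it directly.

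\textbf{Step 1: test the semi-discrete equation elementwise.} The matrix part of (\ref{19}) is the strong equation \(S\frac{u_p^m-u_{p-1}^m}{\tau_p}-\nabla\cdot(K\nabla u_p^m)=-\frac{\alpha_{ex}}{2\pi}(u_p^m-u_p^c)\delta_\Gamma+R_p^m\). Under the regularity assumed in Section~\ref{sec5}, \(u_p^m\in H^2\) away from \(\Gamma\), so this holds in the distributional sense.
\begin{itemize}
\item Multiply by \(w_{ph}^m\) and integrate over each \(T\in\mathcal{T}_{ph}\).
\item Integrate by parts elementwise; this produces \(K\sum_T\int_T\nabla u_p^m\cdot\nabla w_{ph}^m\) plus boundary terms \(-\sum_T\int_{\partial T}K\partial_{n_T}u_p^m\,w_{ph}^m\).
\item On interior faces these boundary terms collapse to \(\int_F K\partial_n u_p^m[w_{ph}^m]\), since \(\partial_n u_p^m\) is single-valued there.
\end{itemize}

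\textbf{Step 2: discard the consistency error.} This is the main obstacle. The face terms \(\int_F K\partial_nu_p^m[w_{ph}^m]\) do not vanish in general, because only \(\int_F[w_{ph}^m]=0\) is imposed. I would first try to remove them under the standing convention that (\ref{19}) is posed on \(W\oplus W_{ph}\) in the broken sense, which is the convention already used in (\ref{65}). Failing that, I would use the first approach below:
\begin{itemize}
\item Subtract the face mean \(\overline{\partial_nu_p^m}\) on each face \(F\); the mean is orthogonal to \([w_{ph}^m]\).
\item This leaves a residual \(\int_F(\partial_nu_p^m-\overline{\partial_nu_p^m})[w_{ph}^m]\), which is \(O(h)\) but not zero.
\item Hence (\ref{66}) is exact only once this consistency term is absorbed. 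The honest route is to state it with that term, or to assume it is absent, as the paper implicitly does in (\ref{65}).
\end{itemize}

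\textbf{Step 3: subtract the discrete equation.} Take the discrete equation (\ref{20'}), rewritten through \(A_{ph}\) and the broken gradient, and subtract it from the elementwise identity of Step 1. The source terms \(R_p^m\) cancel. This leaves three groups of terms:
\begin{itemize}
\item the time-difference terms, which combine into \(S\bigl(\frac{e_p^m-e_{p-1}^m}{\tau_p},w_{ph}^m\bigr)\);
\item the diffusion terms, giving \(K\sum_T\int_T\nabla e_p^m\cdot\nabla w_{ph}^m\);
\item the exchange terms. Using the rearrangement identity for \(a^m+a^c\) (the \(\Pi\)-representation), these become \(\frac{\alpha_{ex}}{2\pi}\int_a^b\int_0^{2\pi}(e_p^m-e_p^c)w_{ph}^m\,d\theta\,dx\), which is exactly \(\sum_T\int_T(e_p^m-e_p^c)\delta_\Gamma w_{ph}^m\).
\end{itemize}
Moving the time and exchange terms to the right-hand side gives (\ref{66}).
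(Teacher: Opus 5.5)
\textbf{The gap is in Step 2, and it cannot be closed in the form you set up.} Take your Step 1 elementwise identity, tested with $w\in W_{ph}^m$, and subtract (\ref{20'}) tested with $(w,0)$. This gives
\[
S(e_p^m-e_{p-1}^m,w)+\tau_pK\sum_{T\in\mathcal{T}_{ph}}\int_T\nabla e_p^m\cdot\nabla w+\tau_p\alpha_{ex}\int_a^b\left[\Pi(e_p^m w)-e_p^c\,\Pi w\right]dx=\tau_p\sum_{F}\int_F K\,\partial_{n_F}u_p^m\,[w],
\]
with $[w]$ oriented by $n_F$ and $[w]=w$ on boundary faces. So (\ref{66}) holds for a given $w$ exactly when the right-hand side vanishes.

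That fails for genuine Crouzeix--Raviart test functions. Take, for instance, the basis function of a face $F_0$. Its jumps on the other faces of the two adjacent tetrahedra are nonzero affine functions with zero mean. The right-hand side is then nonzero as soon as $\partial_{n_F}u_p^m$ is not constant on those faces. Hence the identity is not true on all of $W_{ph}^m$, and your fallbacks do not rescue it:
\begin{itemize}
\item Subtracting face means only shows that the term is $O(h)$, not that it vanishes.
\item Declaring (\ref{19}) valid ``in the broken sense'' is not available, because $u_p$ is defined by (\ref{19}) with test functions in $W$ only. Pointing to (\ref{65}) does not help, since it makes the same unjustified claim.
\item Keeping the term is no better a repair: it contains the unknown $\partial_n u_p^m$, so it could not be carried into a computable estimator.
\end{itemize}

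\textbf{The missing idea is to restrict to conforming test functions $w\in W_{ph}^m\cap H_0^1(\Omega_m)$.} This is all the paper ever uses. The lemma is invoked only in the proof of Lemma~\ref{lem2}, with $w_{ph}^m=I_{\mathrm{Cl}}^0(e_p^m-v_p^m)$, and $I_{\mathrm{Cl}}^0$ takes values in $W_{ph}^{m,k}\cap H_0^1(\Omega_m)$. This is the standard mechanism in Helmholtz-decomposition estimators for nonconforming methods: orthogonality is only tested against conforming interpolants.

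For such $w$ you need no integration by parts and no regularity of $u_p^m$. (In any case $u_p^m$ is not $H^2$ across $\Gamma$, and Section~\ref{sec5} assumes regularity of $u$, not of $u_p$.) The pair $(w,0)$ is admissible both in (\ref{19}) and in (\ref{20'}). Subtracting the two equations:
\begin{itemize}
\item the terms $\tau_p(R_p^m,w)$ cancel;
\item the mass terms give $S(e_p^m-e_{p-1}^m,w)$, because (\ref{20'}) uses $u_{p-1,h}^m$ on the right;
\item the diffusion terms give $\tau_pK\sum_T\int_T\nabla e_p^m\cdot\nabla w$;
\item the exchange part of $a^m$ alone gives $\tau_p\alpha_{ex}\int_a^b[\Pi(e_p^m w)-e_p^c\,\Pi w]\,dx=\tau_p\frac{\alpha_{ex}}{2\pi}\int_a^b\int_0^{2\pi}(e_p^m-e_p^c)\,w\,d\theta\,dx$, which is what the $\delta_\Gamma$ term in (\ref{66}) denotes. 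You do not need the symmetrized $a^m+a^c$ identity here, since $w^c=0$.
\end{itemize}
Dividing by $\tau_p$ yields the claim.
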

	
	\begin{lemma}\label{lem1}\mbox{ }\\
		$\bullet$ For any \(\phi \in H^1(\Omega_m)^3\), we have:
		\begin{equation}
			\int_{\Omega_m} \nabla e_p^m \cdot \mathbf{curl} \, \phi = \sum_{F\in \mathcal{F}_{ph}} \int_F J_{F,t}^p \cdot \phi. \quad \label{67}
		\end{equation}
		
		$\bullet$ For any \(\phi_{ph} \in (W_{ph}^m)^3\), we have:
		\begin{equation}
			\int_{\Omega_m} \nabla e_p^m \cdot \mathbf{curl} \, \phi_{ph} = 0. \quad \label{68}
		\end{equation}
	\end{lemma}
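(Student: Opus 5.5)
The plan is elementwise integration by parts for the curl, using that \(\nabla e_p^m\) is a broken gradient. Here \(e_p^m = u_p^m - u_{ph}^m\), where \(u_p^m\in H_0^1(\Omega_m)\) and \(u_{ph}^m\) is piecewise polynomial with only weak (face-mean) continuity. The statement is then a Green formula for \(\mathbf{curl}\), with \(J_{F,t}^p\) understood as the tangential jump of \(\nabla e_p^m\) (equivalently of \(-\nabla u_{ph}^m\)) across \(F\), namely \(J_{F,t}^p=[\nabla u_{ph}^m\times n_F]\) up to the sign convention. On boundary faces the jump is replaced by the trace from the single adjacent element.

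\textbf{Step 1 (first identity).} For \(\phi\in H^1(\Omega_m)^3\) I would split
\[
\int_{\Omega_m}\nabla e_p^m\cdot\mathbf{curl}\,\phi=\int_{\Omega_m}\nabla u_p^m\cdot\mathbf{curl}\,\phi-\sum_{T}\int_T\nabla u_{ph}^m\cdot\mathbf{curl}\,\phi .
\]
The first term vanishes. Indeed, \(\mathbf{curl}\,\phi\) is divergence free, so by density of \(C_0^\infty\) in \(H_0^1\) we have \(\int\nabla v\cdot\mathbf{curl}\,\phi=-\int v\,\mathrm{div}\,\mathbf{curl}\,\phi=0\) for every \(v\in H_0^1\). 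On each \(T\), \(u_{ph}^m|_T\) is smooth, so \(\mathbf{curl}\,\nabla u_{ph}^m=0\). The elementwise identity \(\int_T \mathbf{curl}\,\phi\cdot\psi=\int_T\phi\cdot\mathbf{curl}\,\psi+\int_{\partial T}(n_T\times\phi)\cdot\psi\) with \(\psi=\nabla u_{ph}^m\) then gives
\[
\int_T\nabla u_{ph}^m\cdot\mathbf{curl}\,\phi=\int_{\partial T}(\nabla u_{ph}^m\times n_T)\cdot\phi .
\]
Summing over \(T\) and grouping the two contributions on each interior face \(F=T_1\cap T_2\), using \(n_{T_2}=-n_{T_1}\), yields \(\sum_F\int_F J_{F,t}^p\cdot\phi\). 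Boundary faces are kept as in the sum over all \(\mathcal{F}_{ph}\). This proves (\ref{67}).

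\textbf{Step 2 (second identity).} This step is the delicate one, since \(\phi_{ph}\in (W_{ph}^m)^3\) is not in \(H^1\). I would again integrate by parts elementwise, now on both factors. Because \(e_p^m\) is not \(H^1\), the cleanest route is to move the derivative onto \(e_p^m\):
\[
\int_T\nabla e_p^m\cdot\mathbf{curl}\,\phi_{ph}=\int_{\partial T} e_p^m\,(\mathbf{curl}\,\phi_{ph}\cdot n_T),
\]
using \(\mathrm{div}\,\mathbf{curl}=0\) on \(T\). For Crouzeix--Raviart type functions, \(\mathbf{curl}\,\phi_{ph}\cdot n_T\) on a face is a tangential derivative of \(\phi_{ph}\times\)-components. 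For the lowest-order tetrahedral element it is constant on each face. The face sums then reduce to face means of \([e_p^m]\), which vanish: \(u_p^m\) is continuous, \(\int_F[u_{ph}^m]=0\), and the boundary face means are zero by the definition of the space.

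The remaining obstacle is the two-sidedness of \(\mathbf{curl}\,\phi_{ph}\cdot n\), which may jump across faces. I would handle it by showing that the normal component of the curl depends only on the tangential trace of \(\phi_{ph}\) on \(F\), whose face moments are shared. Where this is not exactly true, for example with higher-order pentahedral and hexahedral elements, I would instead restrict to the lowest-order case. Alternatively, I would reinterpret (\ref{68}) as holding for \(\phi_{ph}\) in the conforming subspace \((W_{ph}^m\cap H^1)^3\), where it follows directly from (\ref{67}) combined with the face-mean orthogonality of \(J_{F,t}^p\) (piecewise constant for \(k=1\)) against the Clément-type continuous test functions.
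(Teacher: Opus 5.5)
Your Step 1 is correct. It is the standard elementwise Green formula for $\mathbf{curl}$ (the paper only cites it), with $\int_{\Omega_m}\nabla u_p^m\cdot\mathbf{curl}\,\phi=0$ by density and $J_{F,t}^p$ read as the tangential jump of $\nabla u_{ph}^m$, one-sided on boundary faces. You are also right to distrust (\ref{68}) as written. But your repair fails, and this is a genuine gap in Step 2.

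The normal component $\mathbf{curl}\,\phi_{ph}\cdot n_F$ is the surface curl of the tangential trace of $\phi_{ph}$ on $F$. It therefore involves tangential \emph{derivatives} of that trace, which equality of face means does not control. It already jumps for lowest-order tetrahedral Crouzeix--Raviart elements. Let $\psi$ be the basis function of an interior face $F'\neq F$ of $T_1$. Then $\psi|_{T_1}=1-3\lambda_{a'}$ is a non-constant zero-mean affine function on $F$, while $\psi|_{T_2}=0$. Let $\tau$ be the unit direction of $\nabla_F\lambda_{a'}$ and set $\phi_{ph}=\psi\,(n_F\times\tau)\in(W_{ph}^m)^3$. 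Then $\mathbf{curl}\,\phi_{ph}\cdot n_F=\nabla\psi\cdot\tau\neq0$ from $T_1$ and $=0$ from $T_2$. So the interface terms do not collapse to multiples of $\int_F[e_p^m]$. Hence (\ref{68}) fails in general for nonconforming $\phi_{ph}$, and restricting to lowest order does not help.

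The statement that is true is for conforming fields. It is also the only one the paper uses later, since in Lemma~\ref{lem2} the test field is a Clément interpolant in $W_{ph}^{m,k}\cap H^1(\Omega_m)$. Your fallback reaches this reading, but its justification is wrong: $J_{F,t}^p$ has no face-wise orthogonality against continuous functions. For $k=1$, $J_{F,t}^p$ is constant on $F$ and generally nonzero: it is a rotated tangential gradient of the zero-mean affine jump $[u_{ph}^m]$. Hence $\int_F J_{F,t}^p\cdot\phi_{ph}=J_{F,t}^p\cdot\int_F\phi_{ph}$ need not vanish face by face, and the sum over $F$ vanishes only through a global cancellation.

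The way to prove it is the computation you opened Step 2 with:
$\sum_T\int_T\nabla e_p^m\cdot\mathbf{curl}\,\phi_{ph}=\sum_T\int_{\partial T}e_p^m\,(\mathbf{curl}\,\phi_{ph}\cdot n_T)$.
Because $\phi_{ph}\in H^1$ has a continuous tangential trace, $\mathbf{curl}\,\phi_{ph}\cdot n_F$ is single-valued on each face. For piecewise linear $\phi_{ph}$ it is also constant there. So each interior face contributes $(\mathbf{curl}\,\phi_{ph}\cdot n_F)\int_F[e_p^m]=0$, and each boundary face contributes a constant times $\int_F u_{ph}^m=0$.

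This argument needs face-wise constancy. For higher-degree conforming fields, or $Q_1$ fields on hexahedra, $\mathbf{curl}\,\phi_{ph}\cdot n_F$ is not constant on $F$. In that case face-mean continuity of $u_{ph}^m$ alone no longer suffices.
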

	\begin{proof}
		See \cite[Lemma 3.2 and Lemma 3.3]{nicaise2005posteriori}.
	\end{proof}
	
	\begin{lemma}
		For any \(w^m \in H_0^1(\Omega_m)\), we have:
		\begin{eqnarray}\nonumber
			K(\nabla e_p^m,\nabla w^m)& =& \left(R_p^m - S\frac{u_p^m - u_{p-1}^m}{\tau_p} - \frac{\alpha_{ex}}{2\pi}(u_p^m - u_p^c)\delta_{\Gamma}, w^m\right) \\
			&+& \sum_{T\in \mathcal{T}_{ph}} \int_T \operatorname{div}(K\nabla u_{ph}^m) w^m + K\sum_{F\in \mathcal{F}_{ph}^{\mathrm{int}}} \int_F J_{F,n}^m \cdot w^m. \quad \label{69}
		\end{eqnarray}
	\end{lemma}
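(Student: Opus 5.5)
The approach is to start from the identity $K(\nabla e_p^m,\nabla w^m)=K(\nabla u_p^m,\nabla w^m)-K(\nabla u_{ph}^m,\nabla w^m)$ for $w^m\in H_0^1(\Omega_m)$, and to treat the two pieces separately: the first through the semi-discrete equation, the second through elementwise integration by parts.

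For the first piece, take the first equation of (\ref{33}) with test function $w^m$. Unfolding $F(t_p,u_p)$ with the definition of $a^m$ and the rearrangement identity for the exchange term gives
\[
K(\nabla u_p^m,\nabla w^m)=\Bigl(R_p^m-S\tfrac{u_p^m-u_{p-1}^m}{\tau_p},w^m\Bigr)-\alpha_{ex}\int_a^b\bigl[\Pi(u_p^mw^m)-u_p^c\,\Pi w^m\bigr]dx .
\]
The exchange integral equals $\int_a^b\frac{1}{2\pi}\int_0^{2\pi}(u_p^m-u_p^c)\,w^m\circ s_0\,d\theta\,dx$. By the definition of $\delta_\Gamma$ as the surface measure on $\Gamma$, normalized as in (\ref{model2}), this is exactly the duality pairing $\bigl(\frac{\alpha_{ex}}{2\pi}(u_p^m-u_p^c)\delta_\Gamma,w^m\bigr)$. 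This reproduces the first bracket of (\ref{69}).

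For the second piece, $u_{ph}^m$ is a polynomial on each $T$, so Green's formula applies elementwise:
\[
-K\int_T\nabla u_{ph}^m\cdot\nabla w^m=\int_T \operatorname{div}(K\nabla u_{ph}^m)\,w^m-\int_{\partial T}K\,\nabla u_{ph}^m\cdot n_T\,w^m .
\]
Summing over $T\in\mathcal{T}_{ph}$, the boundary faces on $\partial\Omega_m$ drop out because $w^m\in H_0^1(\Omega_m)$ has zero trace there. On each interior face $F=T_1\cap T_2$, the trace of $w^m$ is single valued, so the two contributions combine into $-\int_F K[\partial u_{ph}^m/\partial n_F]\,w^m$. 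Setting $J_{F,n}^m:=-[\partial u_{ph}^m/\partial n_F]$, consistently with $g_F^p$, yields the last sum in (\ref{69}).

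Adding the two identities gives (\ref{69}). The main point needing care is the rigorous identification of the averaged exchange integral with the $\delta_\Gamma$ pairing. Here $w^m\in H^1$ has a trace on the Lipschitz cylinder $\Gamma$, and the surface measure $r\,d\theta\,dx$ must be matched with the normalization $\frac{1}{2\pi}$ used in the model. The bound $\|\Pi v\|\le C_\Pi\|v\|_{H^1}$ from Appendix A guarantees that all terms are well defined. The sign convention for $J_{F,n}^m$ is chosen to match the statement.
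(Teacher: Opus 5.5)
Your argument is correct. The paper states this lemma with no proof; it implicitly borrows the analogous identity from the Nicaise–Soualem heat-equation analysis. Your derivation is the standard one the statement encodes: test the semi-discrete equation (\ref{33}) with $w^m$ to express $K(\nabla u_p^m,\nabla w^m)$, then apply Green's formula element by element to the broken term $-K\sum_T(\nabla u_{ph}^m,\nabla w^m)_T$, with boundary faces removed by the zero trace of $w^m$.

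One point needs more precision, namely the step you already flag. With the literal surface measure $dS=r\,d\theta\,dx$, the pairing would carry an extra factor $r$. The identity therefore holds exactly under the convention $\langle\delta_\Gamma,f\rangle=\int_a^b\int_0^{2\pi}f(s_0(x,\theta))\,d\theta\,dx$, that is, $r^{-1}\,dS$. This convention is forced by the paper's own form $a^m$, and it is consistent with $|\Gamma_x|=2\pi r$ in (\ref{model1}). The identity also relies on your sign choice $J_{F,n}^m=-[\partial u_{ph}^m/\partial n_F]$, taken relative to the paper's sum-of-outward-normals jump.
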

	
	We recall the following result drawn from \cite{forste1987finite, nicaise2005posteriori}.
	
	\begin{lemma}[Helmholtz Decomposition of the Error in the Matrix]
		We have the following error decomposition:
		\begin{equation}
			\nabla e_p^m = \nabla v_p^m + \mathbf{curl} \, \phi_p^m, \quad \label{70}
		\end{equation}
		where $\phi_p^m \in [H^1(\Omega_m)]^3$. Moreover the next estimates hold:
		\begin{equation}
			\begin{array}{r}
				\| v_p^m\|_{L^2(\Omega_m)} \leq \| e_p^m\|_{H_0^1(\Omega_m)},\\
				\| \phi_p^m\|_{L^2(\Omega_m)} \leq \| e_p^m\|_{H_0^1(\Omega_m)}.
			\end{array} \quad \label{71}
		\end{equation}
	\end{lemma}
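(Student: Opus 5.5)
The plan is to build the decomposition variationally, following the standard argument of \cite{forste1987finite} and \cite[Lemma 3.4]{nicaise2005posteriori}. Here $e_p^m$ is only broken-$H^1$, because $u_{ph}^m$ is nonconforming, so $\nabla e_p^m$ denotes the piecewise gradient, which lies in $L^2(\Omega_m)^3$.

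First define $v_p^m\in H_0^1(\Omega_m)$ as the unique solution of
\[
\int_{\Omega_m}\nabla v_p^m\cdot\nabla w=\int_{\Omega_m}\nabla e_p^m\cdot\nabla w,\qquad \forall w\in H_0^1(\Omega_m).
\]
This is the $L^2$-orthogonal projection of the broken gradient onto $\nabla H_0^1(\Omega_m)$, and it is well posed by Lax--Milgram and the Poincaré inequality. Testing with $w=v_p^m$ and applying Cauchy--Schwarz gives $\|\nabla v_p^m\|\le\|\nabla e_p^m\|$. The Poincaré inequality then yields the first bound in (\ref{71}), with the norm of $e_p^m$ understood as the broken $H^1$ seminorm; the constant $C_{\Omega_m}$ is absorbed if it exceeds one.

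Next set $\psi:=\nabla e_p^m-\nabla v_p^m\in L^2(\Omega_m)^3$. By construction $\int_{\Omega_m}\psi\cdot\nabla w=0$ for all $w\in H_0^1(\Omega_m)$, so $\operatorname{div}\psi=0$ in the distributional sense. For a divergence-free $L^2$ field on a simply connected Lipschitz domain, the classical vector potential theorem (Girault--Raviart, Theorem I.3.4, valid here since $\Omega_m=(a,b)\times B_{yz}$ is a Lipschitz cylinder, with $B_{yz}$ assumed simply connected with connected boundary) provides $\phi_p^m\in H^1(\Omega_m)^3$ with $\mathbf{curl}\,\phi_p^m=\psi$ and
\[
\|\phi_p^m\|_{H^1(\Omega_m)}\lesssim\|\psi\|_{L^2(\Omega_m)}.
\]
This gives (\ref{70}). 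Since $\|\psi\|\le\|\nabla e_p^m\|$ by orthogonality (Pythagoras), we obtain $\|\phi_p^m\|_{L^2}\le\|\phi_p^m\|_{H^1}\lesssim\|e_p^m\|$, which is the second bound in (\ref{71}).

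The main obstacle is this potential step. It needs a topological assumption on $\Omega_m$, and the constant is not exactly $1$. Since the paper's convention is $\lesssim$, I will state the bounds up to a domain constant. If an $H^1$ potential without boundary conditions is problematic, I would extend $\psi$ by zero to a ball $B$ containing $\Omega_m$. Its normal trace on $\partial\Omega_m$ vanishes by the orthogonality only if we also test with $w\in H^1$, which we do not have, so instead I would solve a Neumann problem on $B\setminus\overline{\Omega_m}$ to extend $\psi$ divergence-free. Then I would take $\phi=\mathbf{curl}\,(-\Delta)^{-1}$ of the extension on $\mathbb{R}^3$, which gives the $H^1$ bound via Fourier analysis.
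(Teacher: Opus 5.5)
Your argument is correct and follows the standard construction behind this result, which the paper does not prove but only recalls from the F\"orste and Nicaise references: you take $v_p^m$ as the $H_0^1(\Omega_m)$-projection of the broken gradient, note that the remainder $\psi$ is divergence-free, and let the Girault--Raviart vector-potential theorem supply $\phi_p^m\in H^1(\Omega_m)^3$ with $\|\phi_p^m\|_{H^1(\Omega_m)}\lesssim\|\psi\|\le\|\nabla_h e_p^m\|$. Your caveat on constants is necessary, not merely cautious, and two small points in your write-up need correcting: first, $C_{\Omega_m}$ cannot be ``absorbed'' when it exceeds one, because $v_p^m\in H_0^1(\Omega_m)$ is uniquely determined (curl fields are $L^2$-orthogonal to $\nabla H_0^1(\Omega_m)$), so a conforming error forces $v_p^m=e_p^m$ and the stated bound with constant one would be a Poincar\'e inequality with constant one, which fails on large domains; hence the estimates must be read with $\lesssim$, or in the gradient/$H^1$ form you actually prove, which is also what the later interpolation arguments use; second, the simple connectivity of $B_{yz}$ that you assume is not needed, since $\partial\Omega_m$ is connected and the only zero-flux condition in Girault--Raviart follows from $\operatorname{div}\psi=0$ by testing with the constant $1$.
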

	
	\begin{lemma} (Residual equations)\label{lem2}
		For any \(w^m \in H_0^1(\Omega_m)\) and \(\phi^m \in (H_0^1(\Omega_m))^3\), we have the following equations:
		\begin{eqnarray}\label{73}
		\begin{array}{rl}
			& K(\nabla e_p^m,\nabla w^m) = \displaystyle\sum_{T\in \mathcal{T}_{ph}} \int_T G_T^p \cdot (w^m - I_{\mathrm{Cl}}^0 w^m) + K\sum_{F\in \mathcal{F}_{ph}} \int_T J_{F,n}^p \cdot (w^m - I_{\mathrm{Cl}}^0 w^m) \\
			& \qquad -\left(S\frac{e_p^m - e_{p-1}^m}{\tau_p} + \frac{\alpha_{ex}}{2\pi}(e_p^m - e_p^c)\delta_{\Gamma}, w^m\right), \\
			& \displaystyle\int_{\Omega_m} \nabla e_p^m \cdot \mathbf{curl} \, \phi^m = \sum_{E\in \mathcal{F}_{ph}} \int_F J_{F,t}^p \cdot (\phi^m - I_{\mathrm{Cl}}^0 \phi^m), \\
			& S\| e_p^m\|_{L^2(\Omega_m)}^2 + \tau_p K\| e_p^m\|_{H_0^1(\Omega_m)}^2 + \tau_p \frac{\alpha_{ex}}{2\pi}\displaystyle\int_{\Omega_m} (e_p^m)^2 \delta_{\Gamma} \\\label{74}
			& = S(e_{p-1}^m,e_p^m) + \tau_p \frac{\alpha_{ex}}{2\pi}\displaystyle\int_{\Omega_m} e_p^c v_p^m \delta_{\Gamma} - \tau_p \frac{\alpha_{ex}}{2\pi}\displaystyle\sum_{T\in \mathcal{T}_{ph}} \int_T e_p^c \delta_{\Gamma} \cdot I_{\mathrm{Cl}}^0(e_p^m - v_p^m) \\
			& + \left(S(e_p^m - e_{p-1}^m) + \tau_p \frac{\alpha_{ex}}{2\pi} e_p^m \delta_{\Gamma}, e_p^m - v_p^m - I_{\mathrm{Cl}}^0(e_p^m - v_p^m)\right) \\
			& + \tau_p K\displaystyle\sum_{T\in \mathcal{T}_{ph}} \displaystyle\int_T \nabla e_p^m \cdot \nabla I_{\mathrm{Cl}}^0(e_p^m - v_p^m) + \tau_p \sum_{T\in \mathcal{T}_{ph}} \int_T G_T^p \cdot (v_p^m - I_{\mathrm{Cl}}^0 v_p^m) \\
			& + \tau_p \displaystyle\sum_{F\in \mathcal{F}_{ph}} \int_F \left[ J_{F,n}^p \cdot (v_p^m - I_{\mathrm{Cl}}^0 v_p^m) + 
			J_{F,t}^p \cdot (\phi_p^m - I_{\mathrm{Cl}} \phi_p^m) \right]. \quad \label{75}
		\end{array}
		\end{eqnarray}
	\end{lemma}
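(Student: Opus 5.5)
We obtain the three identities of Lemma \ref{lem2} in order, each resting on the representations already derived (\ref{69}), (\ref{67})--(\ref{68}), the Galerkin orthogonality (\ref{66}) and the Helmholtz decomposition (\ref{70}).

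\textbf{First identity.} Start from (\ref{69}) for a fixed \(w^m\in H_0^1(\Omega_m)\). Inside the first bracket, replace the exact quantities \(u_p^m,u_{p-1}^m,u_p^c\) by \(u_{ph}^m+e_p^m\), \(u_{p-1,h}^m+e_{p-1}^m\), \(u_{ph}^c+e_p^c\). Collecting the purely discrete terms together with \(\operatorname{div}(K\nabla u_{ph}^m)\) reproduces \(G_T^p\) elementwise, leaving the remainder
\[
-\Bigl(S\tfrac{e_p^m-e_{p-1}^m}{\tau_p}+\tfrac{\alpha_{ex}}{2\pi}(e_p^m-e_p^c)\delta_\Gamma,\,w^m\Bigr).
\]
Next subtract the same identity tested with \(I_{\mathrm{Cl}}^0 w^m\). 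This is legitimate because \(I_{\mathrm{Cl}}^0 w^m\in W_{ph}^m\cap H_0^1(\Omega_m)\), so (\ref{69}) applies to it. For this conforming test function, (\ref{66}) says that the left-hand side equals the error remainder tested with \(I_{\mathrm{Cl}}^0w^m\). Consequently the element and face residual terms tested with \(I_{\mathrm{Cl}}^0 w^m\) must vanish, which is exactly the orthogonality we need. Subtracting then yields the first line with \(w^m-I_{\mathrm{Cl}}^0w^m\) in the residual terms.

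\textbf{Second identity.} This follows at once from (\ref{67}) applied to \(\phi^m\), minus (\ref{68}) applied to \(I_{\mathrm{Cl}}^0\phi^m\), which lies in \((W_{ph}^m)^3\).

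\textbf{Energy identity.} Multiply the error equation by \(\tau_p\) and test with \(e_p^m\) itself. Write
\[
\|\nabla e_p^m\|^2=(\nabla e_p^m,\nabla v_p^m)+(\nabla e_p^m,\mathbf{curl}\,\phi_p^m)
\]
using (\ref{70}). The curl term is handled by the second identity, giving the \(J_{F,t}^p\) contribution.

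For the gradient part, split \(v_p^m=(v_p^m-I_{\mathrm{Cl}}^0v_p^m)+I_{\mathrm{Cl}}^0v_p^m\).
\begin{itemize}
\item Apply the first identity to \(w^m=v_p^m\). This produces the \(G_T^p\) and \(J_{F,n}^p\) terms against \(v_p^m-I_{\mathrm{Cl}}^0v_p^m\), together with the error terms tested by \(v_p^m\).
\item Rewrite the error terms tested by \(v_p^m\) through the identity \(v_p^m=e_p^m-(e_p^m-v_p^m)\), and then split \(e_p^m-v_p^m\) into \(I_{\mathrm{Cl}}^0(e_p^m-v_p^m)\) plus its interpolation remainder.
\end{itemize}
Moving \(S\|e_p^m\|^2\) and \(\tau_p\frac{\alpha_{ex}}{2\pi}\int(e_p^m)^2\delta_\Gamma\) to the left leaves \(S(e_{p-1}^m,e_p^m)\) and the \(e_p^c\) coupling terms on the right. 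The \(e_p^c\) term against \(v_p^m\) stays explicit, while its part against \(I_{\mathrm{Cl}}^0(e_p^m-v_p^m)\) appears with a minus sign. The leftover conforming piece \(I_{\mathrm{Cl}}^0(e_p^m-v_p^m)\) tested against \(K\nabla e_p^m\) is kept as the broken-gradient term \(\tau_pK\sum_T\int_T\nabla e_p^m\cdot\nabla I_{\mathrm{Cl}}^0(e_p^m-v_p^m)\).

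\textbf{Main obstacle.} The delicate step is this sign and term bookkeeping in the energy identity. In particular, the terms with \((S(e_p^m-e_{p-1}^m)+\tau_p\frac{\alpha_{ex}}{2\pi}e_p^m\delta_\Gamma)\) must be tested exactly against \(e_p^m-v_p^m-I_{\mathrm{Cl}}^0(e_p^m-v_p^m)\). Achieving this requires writing \(e_p^m-v_p^m\) as \(I_{\mathrm{Cl}}^0(e_p^m-v_p^m)\) plus its interpolation remainder and checking each cancellation. We will also need to justify, as in \cite{nicaise2005posteriori}, that \(v_p^m\) is in \(H_0^1(\Omega_m)\) so that \(I_{\mathrm{Cl}}^0 v_p^m\) is defined.
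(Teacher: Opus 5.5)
Your route is the same as the paper's: the Helmholtz splitting (\ref{70}), the first identity at $w^m=v_p^m$, the second at $\phi_p^m$, and Galerkin orthogonality (\ref{66}) at $I_{\mathrm{Cl}}^0(e_p^m-v_p^m)$. You also derive the first two identities, which the paper does not write out, and those derivations are fine after two repairs:
\begin{itemize}
\item ``Collecting terms reproduces $G_T^p$'' holds only if $G_T^p$ is read with $u_{p-1,h}^m$ in the difference quotient and a minus sign on the exchange term. The printed definition has neither.
\item For the second identity, subtracting (\ref{68}) is not enough. You also need (\ref{67}) at $I_{\mathrm{Cl}}^0\phi^m\in H^1(\Omega_m)^3$, so that $\sum_F\int_F J_{F,t}^p\cdot I_{\mathrm{Cl}}^0\phi^m=\int_{\Omega_m}\nabla e_p^m\cdot\curl\,I_{\mathrm{Cl}}^0\phi^m=0$. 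For $\phi_p^m\in H^1(\Omega_m)^3$ the same argument must be run with $I_{\mathrm{Cl}}$.
\end{itemize}

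The genuine gap is the step you defer as bookkeeping: done correctly, it does not produce the printed signs. Set $X:=S(e_p^m-e_{p-1}^m)+\tau_p\frac{\alpha_{ex}}{2\pi}e_p^m\delta_\Gamma$ and $z:=e_p^m-v_p^m$. Substitute $v_p^m=e_p^m-z$ in the error term tested with $v_p^m$ and move $(X,e_p^m)$ to the left. The right-hand side then contains $+(X,z)=(X,z-I_{\mathrm{Cl}}^0z)+(X,I_{\mathrm{Cl}}^0z)$. Multiplying (\ref{66}) by $\tau_p$ and testing with $I_{\mathrm{Cl}}^0z$ gives
\[
(X,I_{\mathrm{Cl}}^0z)=\tau_p\frac{\alpha_{ex}}{2\pi}\int_{\Omega_m}e_p^c\,\delta_\Gamma\, I_{\mathrm{Cl}}^0z-\tau_pK\sum_{T\in\mathcal{T}_{ph}}\int_T\nabla e_p^m\cdot\nabla I_{\mathrm{Cl}}^0z .
\]
So the coupling term against $I_{\mathrm{Cl}}^0(e_p^m-v_p^m)$ enters with a plus sign, and the broken-gradient term with a minus sign. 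That is the opposite of what you claim and of what is stated.

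The printed right-hand side equals the true one minus $2\,(X,I_{\mathrm{Cl}}^0(e_p^m-v_p^m))$, which has no reason to vanish. The paper's proof reaches the printed form only because its intermediate display subtracts $(X,I_{\mathrm{Cl}}^0(e_p^m-v_p^m))$ instead of adding it. No careful bookkeeping will close your argument for the identity as written.

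What your route actually proves is a corrected version:
\begin{itemize}
\item those two signs are reversed (equivalently, $(X,e_p^m-v_p^m)$ is left unsplit);
\item the face terms carry $\tau_pK$ rather than $\tau_p$, since the jumps enter through $K(\nabla e_p^m,\nabla v_p^m)$ and $K(\nabla e_p^m,\curl\,\phi_p^m)$.
\end{itemize}
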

	
	\begin{proof}
		Using the decomposition (\ref{70}), and setting \(w^p = v_p^m\) and \(\phi_p = \phi_p^m\) respectively in $(\ref{73})_1$ and $(\ref{74})_2$, we have:
		\[
		K\| e_p^m\|_{H_0^1(\Omega_m)}^2 = K(\nabla e_p^m,\nabla v_p^m) + K(\nabla e_p^m,\mathbf{curl} \, \phi_p^m)
		\]
		\[
		= -\left(S\frac{e_p^m - e_{p-1}^m}{\tau_p} + \frac{\alpha_{ex}}{2\pi}(e_p^m - e_p^c)\delta_{\Gamma}, v_p^m\right) + \sum_{T\in \mathcal{T}_{ph}} \int_T G_T^p \cdot (v_p^m - I_{\mathrm{Cl}}^0 v_p^m)
		\]
		\[
		+ \sum_{F\in \mathcal{F}_{ph}} \int_F \left[ f_{F,n}^p \cdot (v_p^m - I_{\mathrm{Cl}}^0 v_p^m) + f_{F,t}^p \cdot (\phi_p^m - I_{\mathrm{Cl}} \phi_p^m) \right]
		\]
		
		We can deduce the following:
		\[
		\begin{array}{rl}
			& S\| e_p^m\|_{L^2(\Omega_m)}^2 + \tau_P K\| e_p^m\|_{H_0^1(\Omega_m)}^2 + \tau_P \frac{\alpha_{ex}}{2\pi}\int_{\Omega_m} (e_p^m)^2 \delta_{\Gamma} \\[5pt]
			= & S(e_{p-1}^m,e_p^m) + \tau_P \frac{\alpha_{ex}}{2\pi}\int_{\Omega_m} e_p^c v_p^m \delta_{\Gamma} \\[5pt]
			& + \left(S(e_p^m - e_{p-1}^m) + \tau_P \frac{\alpha_{ex}}{2\pi} e_p^m \delta_{\Gamma}, e_p^m - v_p^m - I_{\mathrm{Cl}}^0(e_p^m - v_p^m)\right) \\[5pt]
			& - \left(S(e_p^m - e_{p-1}^m) + \tau_P \frac{\alpha_{ex}}{2\pi} e_p^m \delta_{\Gamma}, I_{\mathrm{Cl}}^0(e_p^m - v_p^m)\right) \\[5pt]
			& + \tau_P \sum_{T\in \mathcal{T}_{ph}} \int_T G_T^p \cdot (v_p^m - I_{\mathrm{Cl}}^0 v_p^m) \\[5pt]
			& + \tau_P \sum_{F\in \mathcal{F}_{ph}} \int_F \left[ f_{F,n}^p \cdot (v_p^m - I_{\mathrm{Cl}}^0 v_p^m) + f_{F,t}^p \cdot (\phi_p^m - I_{\mathrm{Cl}} \phi_p^m) \right] \quad \label{75'}
		\end{array}
		\]
		
		Using the Galerkin orthogonality relation (\ref{66}) and setting \(w_p^m = I_{\mathrm{Cl}}^0(e_p^m - v_p^m)\), we obtain (\ref{75}).
	\end{proof}
	
	As usual \cite{ainsworth2000posteriori}, the exact residuals \(G_T^p\) are replaced by an approximate element residuals. We denote respectively by \(\overline{G}_T^p\) the projection in convenient finite dimensional spaces of \(G_E^p\). This consists in taking the approximation of \(R_{ph}^m\) as projection of \(R_p^m\) in finite dimensional spaces since \(R_p^c\) is the only one that belongs to an infinite dimensional space. Usually, \(R_{ph}^m\) is considered as the mean value of \(R_p^m\), i.e
	\[
	R_{ph}^m = \frac{1}{|\Omega_c|}\int_{\Omega_c} R_p^m(x) dx.
	\]
	
	Similarly to \cite{nicaise2005posteriori}, we define the error estimators in the following.
	
	\begin{definition}
		Let \(p \geq 1\). The local error estimate \(\eta_T^p\) on the element \(T \in \mathcal{T}_{ph}\) is defined by:
		\[
		\eta_T^p = h_{\min,T} \| \overline{G}_T^p\|_E + \sum_{F\in \mathcal{F}_T} \frac{h_{\min,F}}{h_F^{1/2}} \left( \| \mathcal{J}_{F,n}\|_F + \| \mathcal{J}_{F,t}\|_F \right).
		\]
		
		The global spatial error estimator \(\eta_p^m\) in the porous matrix is given by:
		\[
		\eta_p^m = \left( \sum_{T\in \mathcal{T}_{ph}} (\eta_T^p)^2 \right)^{1/2}.
		\]
		
		The local and global approximation terms are defined by:
		\[
		\xi_T^p = h_{\min,T} \| R_p^m - R_{ph}^m\|_{\omega_T} \quad \text{and} \quad \xi_p^m = \left( \sum_{T\in \mathcal{T}_{ph}} (\xi_T^p)^2 \right)^{1/2}.
		\]
	\end{definition}
	We have the following result:
	\begin{theorem}
	For every $p\in[1,\ldots,N]$, 
	\begin{eqnarray}
		\parallel e_p^m\parallel_{H_0^1(\Omega_m)}\lesssim m_1(e_p^m,\mathcal{T}_{ph})(\eta_p^m+\zeta_p^m)+\parallel e_p^c\parallel_{H_0^1(a,b)}+
		\parallel e_p^m-e_{p-1}^m\parallel_{L^2(\Omega_m\parallel)}.
	\end{eqnarray}
	In particular, if the conduit error is controlled by Theorem \ref{G1}, then
	\begin{eqnarray}
		\parallel e_p^m\parallel_{H_0^1(\Omega_m)}\lesssim m_1(e_p^m,\mathcal{T}_{ph})(\eta_p^m+\zeta_p^m)+\eta_p^c+\zeta_p^c.
	\end{eqnarray}
	\end{theorem}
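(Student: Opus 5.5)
We will prove the bound by the Helmholtz-decomposition argument of \cite{nicaise2005posteriori}, now carried out on anisotropic elements, and we start from the energy identity (\ref{75}) of Lemma \ref{lem2}. We write $\nabla e_p^m=\nabla v_p^m+\curl\,\phi_p^m$ as in (\ref{70}), with $\|v_p^m\|,\|\phi_p^m\|\le\|e_p^m\|_{H_0^1(\Omega_m)}$ by (\ref{71}). The left-hand side of (\ref{75}) contains $\tau_pK\|e_p^m\|_{H_0^1(\Omega_m)}^2$ and two further nonnegative terms. The strategy is to bound each term on the right by $\|e_p^m\|_{H_0^1(\Omega_m)}$ times an estimator-type quantity, divide by $\|e_p^m\|_{H_0^1(\Omega_m)}$, and then divide by $\tau_pK$.

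\textbf{Residual terms.} We first treat the element and face terms. Replacing $G_T^p$ by $\overline G_T^p+(G_T^p-\overline G_T^p)$ and applying Cauchy--Schwarz elementwise gives
\[
\sum_T h_{\min,T}\|\overline G_T^p\|_T\,h_{\min,T}^{-1}\|v_p^m-I_{\mathrm{Cl}}^0v_p^m\|_T \le \eta_p^m\Big(\sum_T h_{\min,T}^{-2}\|v_p^m-I_{\mathrm{Cl}}^0v_p^m\|_T^2\Big)^{1/2}.
\]
By (\ref{29}) this is at most $\eta_p^m\,m_1(v_p^m,\mathcal T_{ph})\|\nabla v_p^m\|$. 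The data oscillation part gives $\zeta_p^m$ in the same way. The face jumps $J_{F,n}^p$ and $J_{F,t}^p$ are weighted by $h_{\min,F}h_F^{-1/2}$ and are handled with (\ref{30}) and (\ref{31}) for $v_p^m$ and $\phi_p^m$ respectively.

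\textbf{Alignment factor.} Here we must pass from $m_1(v_p^m,\cdot)$ and $m_1(\phi_p^m,\cdot)$ to $m_1(e_p^m,\mathcal T_{ph})$. We expect this to be the main obstacle. We will follow Kunert's convention and take this as the definition of the alignment factor for the error, namely the maximum of the measures of the Helmholtz components, since $\nabla v_p^m$ and $\curl\phi_p^m$ are components of $\nabla e_p^m$. We will state this explicitly as an assumption rather than claim it in general.

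\textbf{Remaining terms.} The term $\tau_pK\sum_T\int_T\nabla e_p^m\cdot\nabla I_{\mathrm{Cl}}^0(e_p^m-v_p^m)$ vanishes, or is absorbed, by the Galerkin orthogonality (\ref{66}). This converts it into time-difference and coupling contributions. The time-difference terms $S(e_p^m-e_{p-1}^m,\cdot)$ are bounded by Cauchy--Schwarz together with stability of $I_{\mathrm{Cl}}^0$ in $L^2$ and $H^1$ (a consequence of (\ref{29})) and Poincaré; they produce the term $\|e_p^m-e_{p-1}^m\|_{L^2(\Omega_m)}$. The coupling terms with $\delta_\Gamma$ involving $e_p^c$ are rewritten as $\int_a^b e_p^c\,\Pi(\cdot)\,dx$. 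Boundedness of $\Pi$ and Poincaré on $(a,b)$ then give $C\|e_p^c\|_{H_0^1(a,b)}\|e_p^m\|_{H_0^1(\Omega_m)}$. The term $\tau_p\frac{\alpha_{ex}}{2\pi}(e_p^m\delta_\Gamma,\cdot)$ is controlled by the same trace bound on $\Pi$ and absorbed into the left-hand side via Young's inequality, using the coercivity assumption (\ref{43}). Collecting all terms, dividing by $\|e_p^m\|_{H_0^1(\Omega_m)}$, and letting the constants depend on $S,K,\alpha_{ex},\tau_p$ yields the first inequality.

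\textbf{Second inequality.} We insert (\ref{61}) for $\|e_p^c\|_{H_0^1(a,b)}$. Its term $\alpha_{ex}\|e_p^m\|$ feeds back into the left-hand side, so we absorb it under a smallness condition on $\alpha_{ex}$ relative to $K$, in the spirit of (\ref{43}). The time-difference term is then either treated as part of the time estimator $\eta_p$ or absorbed likewise. This leaves $m_1(\eta_p^m+\zeta_p^m)+\eta_p^c+\zeta_p^c$.
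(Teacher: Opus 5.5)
For the first inequality you follow the paper's route. Both start from the energy identity of Lemma~\ref{lem2}, bound the element, normal-jump and tangential-jump terms by the anisotropic Cl\'ement estimates with an alignment factor, bound the temporal and coupling terms by Cauchy--Schwarz and the boundedness of $\Pi$, and conclude by coercivity.

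The real divergence is the alignment factor. The paper obtains $m_1(e_p^m,\mathcal T_{ph})$ by ``taking $v_p^m=e_p^m$''. That choice is not available: $v_p^m$ is fixed by the Helmholtz decomposition, and $e_p^m\notin H_0^1(\Omega_m)$ in general for a Crouzeix--Raviart approximation. Moreover, the paper's bounds on $I_4,I_5$ are proportional to $\|e_p^m-v_p^m\|_{H_0^1(\Omega_m)}$, so this choice would also erase the terms $\|e_p^c\|_{H_0^1(a,b)}$ and $\|e_p^m-e_{p-1}^m\|_{L^2(\Omega_m)}$ that the statement keeps. Your explicit assumption, measuring alignment on the Helmholtz components, is the defensible version. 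Say plainly, though, that you are then proving the bound for a redefined $m_1$.

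There is also a bookkeeping point. The right side of the identity contains $S(e_{p-1}^m,e_p^m)$, which your list of terms omits. To get $\|e_p^m-e_{p-1}^m\|_{L^2(\Omega_m)}$ rather than $\|e_{p-1}^m\|_{L^2(\Omega_m)}$, you must combine it with $S\|e_p^m\|_{L^2(\Omega_m)}^2$. You cannot simply discard that left-hand term as nonnegative.

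The genuine gap is in the second inequality. Inserting (\ref{61}) and absorbing $\alpha_{ex}\|e_p^m\|_{H_0^1(\Omega_m)}$ needs a smallness hypothesis on $\alpha_{ex}$ that is not in the statement; that is acceptable if you add it. The step that fails is ``the time-difference term is treated as part of $\eta_p$ or absorbed likewise''.

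The quantity $\|e_p^m-e_{p-1}^m\|_{L^2(\Omega_m)}$ contains the previous-step error $e_{p-1}^m$, and nothing on the left at step $p$ controls it. Suppose you keep $S(e_p^m-e_{p-1}^m,e_p^m)=\frac S2\big(\|e_p^m\|^2-\|e_{p-1}^m\|^2+\|e_p^m-e_{p-1}^m\|^2\big)$ on the left so that the increment is absorbed. You are then left with $\frac S2\|e_{p-1}^m\|_{L^2(\Omega_m)}^2$ on the right, which is an unknown error, not an estimator, and it is amplified by $1/(\tau_pK)$ after normalizing.

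Nor is this term ``part of $\eta_p$''. The indicator $\eta_p=\tau_p^{1/2}\|\nabla(u_{ph}^m-u_{p-1,h}^m)\|$ involves only discrete solutions, whereas $e_p^m-e_{p-1}^m$ also contains the semi-discrete increment $u_p^m-u_{p-1}^m$. In any case $\eta_p$ does not appear in the target bound.

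So your argument yields at most
$\|e_p^m\|_{H_0^1(\Omega_m)}\lesssim m_1(e_p^m,\mathcal T_{ph})(\eta_p^m+\zeta_p^m)+\eta_p^c+\zeta_p^c+\|e_p^m-e_{p-1}^m\|_{L^2(\Omega_m)}$. Removing the last term needs a cumulative argument: sum over $p$ and telescope. That brings in $\|e_0^m\|$ and the estimators of all earlier steps, so it does not give a per-step bound. The paper's proof stops after the first inequality and offers no derivation of the second, so it gives you nothing to close this step.
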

\begin{proof}
	From Lemma \ref{lem2}, choosing $v_p^m\in H_0^1(\Omega_m)$ and  using the Helmholtz decomposition $\nabla(e_p^m-v_p^m)=\curl \phi_p^m$, we obtain the residual identity 
	$$\mathcal{A}_p(e_p^m,e_p^m)=I_1+I_2+I_3+I_4+I_5,$$
	where the terms $I_i$ correspond respectively to the element residuals, face residuals, tangential jumps, temporal residuals and coupling residuals.\\\\
	\textbf{Estimate of the element residual term.} Using the Clément interpolation estimate (\ref{26}), Cauchy-Schwarz inequality and the definition of $\eta_p^m$ and $\zeta_p^m$, one obtains
	$$|I_1|\lesssim m_1(v_p^m,\mathcal{T}_{ph})(\eta_p^m+\zeta_p^m)\parallel v_p^m\parallel_{H_0^1(\Omega_m)}.$$
	Indeed, the manuscript already establishes 
	$$\displaystyle\sum_{T\in\mathcal{T}_{ph}} G_T^p(v_p^m-I_{Cl}^0 v_p^m)\lesssim m_1(v_p^m,\mathcal{T}_{ph})(\eta_p^m+\zeta_p^m)\parallel v_p^m\parallel_{H_0^1(\Omega_m)}.$$
	\textbf{Estimate of the face residual terms.}  Using  (\ref{27}) and (\ref{28}) together with Cauchy-Schwarz yields 
	$$|I_1|+|I_2|\lesssim m_1(v_p^m,\mathcal{T}_{ph})(\eta_p^m+\zeta_p^m)\parallel v_p^m\parallel_{H_0^1(\Omega_m)}.$$
	\textbf{Estimate of the temporal residual.} Using Cauchy-Schwarz, 
	$$|I_4|\lesssim \parallel S(e_p^m-e_{p-1}^m)+\tau_p\frac{\alpha_{ex}}{2\pi}e_p^m\delta_{\Gamma}\parallel_{H^{-1}(\Omega_m)}\parallel e_p^m-v_p^m\parallel_{H_0^1(\Omega_m)}.$$
	The boundedness of the trace operator and Proposition \ref{a1} imply
	$$|I_4|\lesssim \left(\parallel e_p^m-e_{p-1}^m\parallel_{L^2(\Omega_m)}+\parallel e_p^m\parallel_{H_0^1(\Omega_m)}\right)\parallel e_p^m-v_p^m\parallel_{H_0^1(\Omega_m)}.$$
	\textbf{Estimate of the coupling term.} For the term involving $e_p^c$,
	$$|I_5|=\left|\tau_p \frac{\alpha_{ex}}{2\pi}\int_{\Omega_m} e_p^c \delta_{\Gamma} I_{Cl}^0 (e_p^m-v_p^m)\right|,$$
	the trace inequality and the boundedness of $\Pi$ give
	$$|I_5|\lesssim \parallel e_p^c\parallel_{H_0^1(a,b)}\parallel e_p^m-v_p^m\parallel_{H_0^1(\Omega_m)}.$$
	Taking $v_p^m=e_p^m$, all interpolation terms vanish and the previous estimates leads to
	\begin{eqnarray*}
	\mathcal{A}_p(e_p^m,e_p^m)&\lesssim& m_1(e_p^m,\mathcal{T}_{ph})(\eta_p^m+\zeta_p^m)\parallel e_p^m\parallel_{H_0^1(\Omega_m)}\\
	&+&
	\left(\parallel e_p^c\parallel_{H_0^1(a,b)}+\parallel e_p^m-e_{p-1}^m\parallel_{L^2(\Omega_m)}
	\parallel e_p^m\parallel_{H_0^1(\Omega_m)}.
	\right)
	\end{eqnarray*}
Using the coercivity of $\mathcal{A}_p$, 
$$\mathcal{A}_p(e_p^m,e_p^m)\geq \alpha_p \parallel e_p^m\parallel_{H_0^1(\Omega_m)}^2,$$
and dividing by $\parallel e_p^m\parallel_{H_0^1(\Omega_m)}$, we obtain
$$\parallel e_p^m\parallel_{H_0^1(\Omega_m)}\lesssim m_1(e_p^m,\mathcal{T}_{ph})(\eta_p^m+\zeta_p^m)+
\parallel e_p^c\parallel_{H_0^1(a,b)}+\parallel e_p^m-e_{p-1}^m\parallel_{L^2(\Omega_m)}.
$$
This completes the proof.
\end{proof}

\section{Conclusion}\label{conclusion}
	We have investigated an a-priori and a-posteriori error analysis for the time-dependent case of a coupled continuum pipe-flow in three dimensions on anisotropic meshes (i.e. elements with very large aspect ratio). 
Our analyses include several nonconforming finite element methods:
various of the Crouzeix-Raviart element methods. These methods require the stability of the element
pairs in orders to guarantees existence and uniqueness of the discrete solution of the model. Leveraging
residual elements, local error indicators and a global estimators hold unconditionally while their
reliability depends on the alignment of the mesh in the porous matrix with the anisotropic solution.
Mesh requirements are established to ensure the reliability of the error estimator.
	\section*{Declaration}

	\subsection*{Conflict of interest} The authors declare that there is no conflict of interest regarding the publication of
	this paper.
	\subsection*{Funding} No funding
	
	\subsection*{Acknowledgments}The authors would like to thank the anonymous reviewers for their valuable comments
	and constructive suggestions, which significantly improved the clarity of the methodology and the
	presentation of the results.

	\appendix
	
	\section{Boundedness of the operator \(\Pi\)}\label{apin}
	
	\begin{proposition} \label{a1}
		There exists a constant \(C_{\Pi} > 0\), for any \(v \in H_0^1(\Omega_m)\) such that:
		\begin{equation}
			\| \Pi v\|_{L^2(a,b)} \leq C_{\Pi} \| u\|_{H_0^1(\Omega_m)}, \quad 
		\end{equation}
		
		Furthermore, for any \(v \in H_0^1(\Omega_m)\) and \(w \in H_0^1(\Omega_m)\), we have:
		\begin{equation}\label{a2}
			\left| \int_a^b \Pi (v w)(x) dx \right| \leq C_{\Pi}^2 \| v\|_{H_0^1(\Omega_m)} \| w\|_{H_0^1(\Omega_m)}. 
		\end{equation}
	\end{proposition}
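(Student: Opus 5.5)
The plan is to reduce both estimates to a trace inequality on the cylindrical surface \(\Gamma\), after a pointwise Cauchy--Schwarz (Jensen) bound in the angular variable. We assume, as the geometry of Fig.~\ref{fig:domain} suggests, that the closed disk of radius \(r\) centred at \((y_0,z_0)\) lies inside \(B_{yz}\), so that \(\Gamma\subset\Omega_m\). We work first with \(v\in C_c^\infty(\Omega_m)\) and pass to \(H_0^1(\Omega_m)\) by density at the end. This density step also gives meaning to \(v(s_0(x,\theta))\) as the trace of \(v\) on \(\Gamma\).

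\textbf{Step 1: reduction to \(L^2(\Gamma)\).} For a.e.\ \(x\in(a,b)\), Cauchy--Schwarz in \(\theta\) gives
\[
|\Pi v(x)|^2\le \frac{1}{2\pi}\int_0^{2\pi}|v(s_0(x,\theta))|^2\,d\theta .
\]
The surface measure on \(\Gamma\) in the parametrization \(s_0\) is \(d\sigma=r\,d\theta\,dx\). Integrating over \((a,b)\) therefore yields
\[
\|\Pi v\|_{L^2(a,b)}^2\le \frac{1}{2\pi}\int_a^b\!\!\int_0^{2\pi}|v(s_0(x,\theta))|^2\,d\theta\,dx=\frac{1}{2\pi r}\|v\|_{L^2(\Gamma)}^2 .
\]

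\textbf{Step 2: trace and Poincaré.} Let \(\Omega_{\rm in}=\{(x,y,z):x\in(a,b),\ (y-y_0)^2+(z-z_0)^2<r^2\}\subset\Omega_m\). This is a Lipschitz domain, and \(\Gamma\subset\partial\Omega_{\rm in}\). The standard trace theorem on \(\Omega_{\rm in}\) gives
\[
\|v\|_{L^2(\Gamma)}\le C_{\rm tr}\|v\|_{H^1(\Omega_{\rm in})}\le C_{\rm tr}\|v\|_{H^1(\Omega_m)} .
\]
The Poincaré inequality on \(\Omega_m\), valid since \(v\) vanishes on \(\partial\Omega_m\), gives \(\|v\|_{H^1(\Omega_m)}\le (1+C_{\Omega_m}^2)^{1/2}\|\nabla v\|_{L^2(\Omega_m)}\). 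Setting
\[
C_\Pi^2:=\frac{C_{\rm tr}^2(1+C_{\Omega_m}^2)}{2\pi r}
\]
proves the first estimate. We record the slightly stronger intermediate bound
\[
\frac{1}{2\pi}\int_a^b\!\!\int_0^{2\pi}|v(s_0)|^2\,d\theta\,dx\le C_\Pi^2\|v\|^2_{H_0^1(\Omega_m)} .
\]

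\textbf{Step 3: the bilinear estimate (\ref{a2}).} By definition,
\[
\int_a^b\Pi(vw)\,dx=\frac{1}{2\pi}\int_a^b\!\!\int_0^{2\pi}v(s_0)\,w(s_0)\,d\theta\,dx .
\]
Cauchy--Schwarz in the product measure \(\frac{1}{2\pi}\,d\theta\,dx\) bounds this by the product of the two square roots \(\bigl(\frac{1}{2\pi}\int\!\!\int|v(s_0)|^2\bigr)^{1/2}\) and \(\bigl(\frac{1}{2\pi}\int\!\!\int|w(s_0)|^2\bigr)^{1/2}\). Each factor is controlled by the intermediate bound of Step 2, which gives \(C_\Pi^2\|v\|_{H_0^1(\Omega_m)}\|w\|_{H_0^1(\Omega_m)}\).

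\textbf{Step 4: extension by density.} Both sides of each estimate are continuous in the \(H_0^1\) topology once \(\Pi\) is read through the trace operator. The estimates therefore extend from \(C_c^\infty(\Omega_m)\) to all of \(H_0^1(\Omega_m)\).

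\textbf{Main obstacle.} The delicate point is the justification of the trace on the \emph{interior} surface \(\Gamma\). We handle it by restricting to the Lipschitz subdomain \(\Omega_{\rm in}\), or equally to the exterior region \(\Omega_m\setminus\overline{\Omega_{\rm in}}\). A subsidiary point is that the trace constant depends on \(r\) and on \(b-a\); this is harmless, since \(r\) is fixed.
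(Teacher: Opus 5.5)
Your proof is correct, and its skeleton matches the paper's. Both arguments use Cauchy--Schwarz in $\theta$, rewrite $\frac{1}{2\pi}\int_a^b\int_0^{2\pi}|v(s_0)|^2\,d\theta\,dx$ as $\frac{1}{2\pi r}\|v\|_{L^2(\Gamma)}^2$, apply an $H^1$ bound with Poincaré, and finish (\ref{a2}) with Cauchy--Schwarz in $(x,\theta)$.

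The genuine difference is the middle step, and there your version is the sound one. The paper passes directly to $\frac{1}{2\pi r}\int_{\Omega_m}|v|^2\,dx$ and then uses Poincaré alone to obtain $C_\Pi=C_{\Omega_m}/\sqrt{2\pi r}$. But an $L^2(\Gamma)$ norm is not controlled by the $L^2(\Omega_m)$ norm. Take $v_\epsilon=\phi(x)\psi((\rho-r)/\epsilon)$, with $\rho$ the distance to the axis, $\phi\in C_c^\infty(a,b)$, $\psi\in C_c^\infty(-1,1)$ and $\psi(0)=1$. Then $\Pi v_\epsilon=\phi$ for every $\epsilon$, while $\|v_\epsilon\|_{L^2(\Omega_m)}^2=O(\epsilon)$. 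A rescaling of the domain also shows that the final constant $C_{\Omega_m}/\sqrt{2\pi r}$ cannot hold in general. The paper's proof of (\ref{a2}) has the same omission, since ``Poincaré'' is applied there directly to $\int_\Gamma v^2$.

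Your trace inequality on the Lipschitz subcylinder $\Omega_{\mathrm{in}}$ is exactly the missing ingredient. It costs you an explicit constant: $C_\Pi^2=C_{\mathrm{tr}}^2(1+C_{\Omega_m}^2)/(2\pi r)$ involves an unquantified trace constant. However, the statement only claims existence of $C_\Pi$, so this is enough. Your density step also gives meaning to $v(s_0(x,\theta))$ for $v\in H_0^1(\Omega_m)$, which the paper never addresses.

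Your hypothesis that the closed disk of radius $r$ lies in $B_{yz}$ is implicit in the paper's geometry. If you want to drop it, apply the trace theorem on a thin annular tube $\{r-\delta<\rho<r\}\times(a,b)$ instead. This tube lies in $\Omega_m$ for small $\delta>0$ as soon as the circle $\Gamma_x$ lies in $B_{yz}$.
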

	
	\begin{proof}
		Consider \(v \in H_0^1(\Omega_m)\). Using Cauchy-Schwarz inequality, for any \(x \in (a,b)\), we have:
		\[
		|\Pi u(x)|^2 \leq \frac{1}{2\pi} \int_0^{2\pi} |u(s_0(x,\theta))|^2 d\theta.
		\]
		
		Referring to the definition of \(s_0\) and \(\Gamma\), we can deduce:
		\[
		\| \Pi u\|_{L^2(a,b)}^2 \leq \frac{1}{2\pi r} \int_{\Omega_m} |u(x)|^2 dx.
		\]
		
		Denoting by \(C_{\Omega_m}\) the Poincaré constant relative to the domain \(\Omega_m\), we have:
		\[
		\| \Pi u\|_{L^2(a,b)}^2 \leq \frac{C_{\Omega_m}^2}{2\pi r} \| u\|_{H_0^1(\Omega_m)}^2.
		\]
		
		Taking \(C_{\Pi} = \frac{C_{\Omega_m}}{\sqrt{2\pi r}}\) leads to (A.1).
		
		Now, let's consider \(v\) and \(w\) in \(H_0^1(\Omega_m)\). Referring to the definition of \(s_0\) and \(\Gamma\), thanks to Cauchy-Schwarz inequality, for any \(x \in (a,b)\), we obtain:
		\[
		\left| \int_a^b \Pi (u v)(x) dx \right| \leq \frac{1}{2\pi r} \left( \int_{\Gamma} u^2 \right)^{1/2} \left( \int_{\Gamma} v^2 \right)^{1/2}.
		\]
		
		Applying Poincaré inequality to the right-hand term leads to (\ref{a1}).
	\end{proof}
	
	\section{Anisotropic vectors}
	
			\begin{figure}[!h]
			\centering
			\includegraphics[width=15cm,height=13cm]{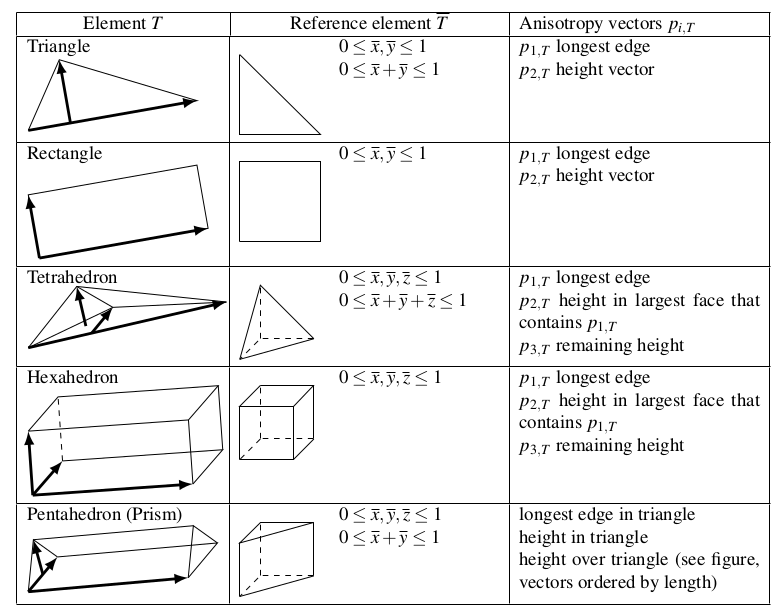}
		\end{figure}
\end{document}